\DeclareMathAlphabet{\mathbbold}{U}{bbold}{m}{n}
\theoremstyle{plain}
\newtheorem{theorem}{Theorem}[section]
\newtheorem*{theorem*}{Theorem}
\newtheorem*{conj*}{Conjecture}
\newtheorem{lemma}[theorem]{Lemma}
\newtheorem{prop}[theorem]{Proposition}
\newtheorem{cor}[theorem]{Corollary}
\newtheorem{thmx}{Theorem}
\theoremstyle{definition}
\newtheorem{definition}[theorem]{Definition}
\newtheorem{rem}[theorem]{Remark}
\theoremstyle{remark}
\newtheorem*{remark}{Remark}
\numberwithin{equation}{section}
\numberwithin{theorem}{section}
\numberwithin{table}{section}
\numberwithin{figure}{section}
\renewcommand{\leq}{\leqslant}
\renewcommand{\geq}{\geqslant}
\newcommand{\diam}  {\operatorname{diam}}
\newcommand{\card} {\operatorname{card}}
\newcommand{\supp}{\operatorname{supp}}
\newcommand{\R}{\mathbb{R}}
\newcommand{\N}{\mathbb{N}}
\providecommand{\abs}[1]{\lvert#1\rvert}
\providecommand{\Absbig}[1]{\bigl\lvert#1\bigr\rvert}
\providecommand{\Absbigg}[1]{\biggl\lvert#1\biggr\rvert}
\providecommand{\AbsBig}[1]{\Bigl\lvert#1\Bigr\rvert}
\providecommand{\norm}[1]{\|#1\|}
\providecommand{\Normbig}[1]{\bigl\|#1\bigr\|}
\providecommand{\Normbigg}[1]{\biggl\|#1\biggr\|}
\renewcommand{\:}{\colon}
\renewcommand{\=}{\coloneqq}
\newcommand{\cG}{\mathcal{G}}
\newcommand{\cK}{\mathcal{K}}
\renewcommand{\cL}{\mathcal{L}}
\newcommand{\cM}{\mathcal{M}}
\renewcommand{\cR}{\mathcal{R}}
\newcommand{\cU}{\mathcal{U}}
\newcommand{\oA}{\overline{A}}
\newcommand{\ox}{\overline{x}}
\newcommand{\oy}{\overline{y}}
\newcommand{\tW}{\widetilde{W}}
\newcommand{\tz}{\widetilde{z}}
\newcommand{\energy}{Q(T,A)}
\newcommand{\cstar}[2][black]
{
	\tikz[baseline=(X.base)]{\node[circle, draw, inner sep=0pt, text=#1, minimum height=#2] (X) {$*$};}
}
\newcommand{\ostar}{\tiny\cstar{1pt}}
\newcommand{\odiv}{\oslash}
\newcommand{\cddiv}{  
	\begin{tikzpicture}[baseline=(center.center)]  
		\node[circle, draw, minimum size=8pt,inner sep=0pt] (center){}; 
		\node at (center) {\tiny $\div$}; 
	\end{tikzpicture}  
}
\newcommand{\oddiv}{\hspace{-3.5pt}\raisebox{0.6ex}{\cddiv}\hspace{-3.5pt}}
\newcommand{\oplusu}[1] {\underset{#1}{\oplus}}
\newcommand{\Rm}{\overline{\R}_{\max}}
\newcommand{\assum}{be an open continuous distance-expanding map on a compact metric space $X$}
\newcommand{\assumpo}{$A \in \aholder$ with $\alpha \in (0,1]$}
\newcommand{\aholder}{C^{0,\alpha}(X,d)}
\begin{document}
\title{Tropical thermodynamic formalism}
\author{Zhiqiang~Li \and Yiqing~Sun}
\address{Zhiqiang~Li, School of Mathematical Sciences \& Beijing International Center for Mathematical Research, Peking University, Beijing 100871, China.}
\email{zli@math.pku.edu.cn}
\address{Yiqing~Sun, School of Mathematical Sciences, Peking University, Beijing 100871, China.}
\email{2100010781@stu.pku.edu.cn}

	\subjclass[2020]{Primary: 37A99; Secondary: 15A80, 37C30, 37D35, 60F10}
	\keywords{tropical algebra, max-plus algebra, Bousch operator, idempotent measure, ergodic optimization, thermodynamic formalism, zero-temperature limit, large deviation}
	
	\begin{abstract}
    We investigate the zero-temperature large deviation principle for equilibrium states in the context of distance-expanding maps. The logarithmic-type zero-temperature limit in the large deviation principle induces a tropical algebra structure, which motivates our study of the tropical adjoint Bousch operator $\cL_A^{\ostar}$ since the Bousch operator $\cL_A$ is tropical linear and corresponds to the Ruelle operator $\cR_A$. 
    
    We extend tropical functional analysis, define the adjoint operator $\cL_A^{\ostar}$ corresponding to $\cR_A^{*}$, and establish the existence and generic uniqueness of tropical eigen-densities of $\cL_A^{\ostar}$. The Aubry set and the Ma\~{n}\'{e} potential, both originating from weak KAM theory, serve as important tools in the representation of tropical eigen-densities. 
    
    We derive a sufficient condition for the large deviation principle which holds for a generic H\"{o}lder potential and establish a characterization theorem for the large deviation principle. 
	\end{abstract}
	
	\maketitle
	
	\tableofcontents

\section{Introduction}     \label{sct_Introduction}

Large deviation theory characterizes the asymptotic computation of small probabilities on an exponential scale. This topic gains much interest due to its various applications and the establishment of a general framework by Varadhan \cite{Var66}. For a general account of large deviation theory, see e.g,~\cite{DZ09} and \cite{Ell85}.

In dynamical systems, the large deviation of empirical means on orbits approximating certain invariant measure in weak$^*$ topology is often studied (see e.g.,~\cite{Ki90}, \cite{AP06}, \cite{MN08}, and \cite{CRT19}). Parallel to these studies, in this article, we investigate the \emph{zero-temperature} large deviation of equilibrium states parameterized by an inverse temperature, which has been studied in symbolic dynamics (see~\cite{BLT06} and \cite{Me18}). Specifically, we consider the following problem:
\begin{equation*}
    \begin{aligned}
    & \text{For which pair }(T,A) \text{ of dynamical systems and potentials does the large deviation}\\
    &\text{principle hold for the family of equilibrium states }\{\mu_{\beta A}\}_{\beta\in(1,+\infty)} \text{ as } \beta\to+\infty \text{?}
    \end{aligned}
\end{equation*} 
Similar phenomena have been investigated in other settings, see e.g.\ Yilin~Wang's seminal work \cite{Wa19} on the large deviation of SLE$_\kappa$ as $\kappa\to 0^+$.

Note that our formulation of the above problem does not a priori assume the weak$^*$ convergence of $\mu_{\beta A}$ as $\beta\to+\infty$ (cf.~\cite{DV75}). Generally, every weak$^*$ accumulation point of $\{\mu_{\beta A}\}_{\beta\in(1,+\infty)}$ as $\beta\to+\infty$ is a maximizing measure for the potential $A$. 

For a similar phenomenon in Lagrangian systems, every weak$^*$ accumulation point of the invariant measures generated by the twisted Schr\"{o}dinger operators as the viscosity coefficient tends to zero is an action minimizing measure for the Lagrangian (cf.~\cite{An04}).

In this article, we address the aforementioned problem in the context of distance-expanding maps by observing a natural connection: the logarithmic-type (zero-temperature) limits in large deviation principles of equilibrium states induce a tropical algebra structure. This observation motivates our development of a theory of \emph{tropical thermodynamic formalism}, which further enriches the dictionary between ergodic optimization and thermodynamic formalism. For more about tropical algebra and other applications, see e.g., \cite{LMS01} and \cite{Mi06}.

In tropical thermodynamic formalism, tropical (max-plus) algebra corresponds to the standard linear algebra over the real numbers and the Bousch operator in ergodic optimization corresponds to the Ruelle operator in the classical theory of thermodynamic formalism. We systematically investigate this correspondence in the following steps:
(i) expanding tropical functional analysis (also known as \emph{idempotent analysis}, cf.~\cite{Ak99}, \cite{CGQ04}, and \cite{LMS01}) to support dynamical applications;
(ii) developing adjoint operator theory for the tropical regime; and 
(iii) extending logarithmic-type (zero-temperature) limits to operators, eigenfunctions, and eigenmeasures, creating explicit bridges between thermodynamic and tropical objects.

To describe these points in more detail, we first review the basic notions of thermodynamic formalism and ergodic optimization.

Thermodynamic formalism in ergodic theory dates back to the works of Sinai, Bowen, Ruelle, and others around the early 1970s \cite{Do68, Si72, Bow75, Ru78}, inspired by statistical mechanics. See also \cite{Br65, Ly82} for early works in complex dynamics. To be more precise, let $T\: X\to X$ be a continuous map on a compact metric space $(X,d)$, and $\varphi\: X\to\R$ be a continuous function. The measure-theoretic pressure is defined by
\begin{equation*}   
	P_\mu(T,\varphi) \= h_\mu(T) + \int\! \varphi \,\mathrm{d}\mu,
\end{equation*}
where $h_\mu(T)$ is the measure-theoretic entropy and $\mu$ is a $T$-invariant Borel probability measure. The measure $\mu$ is called an \emph{equilibrium state} if $\mu$ maximizes $P_\mu(T,\varphi)$ and the maximum is the \emph{topological entropy}, which we denote by $P(T,\varphi)$. In particular, for a constant potential, an equilibrium state is called a \emph{measure of maximal entropy}. Equilibrium states are the central focus of thermodynamic formalism. The Ruelle operator $\cR_{\varphi}$, also known as the Ruelle--Perron--Frobenius operator or the transfer operator, was introduced by Ruelle to study the equilibrium states. For example, it is well known that if $T\:X\to X$ is open, continuous, distance-expanding, and transitive, and $\varphi$ is Lipschitz, then the product of the unique eigenfunction $u_{\varphi}$ of $\cR_{\varphi}$ and the unique eigen-measure $m_{\varphi}$ of $\cR_{\varphi}^{*}$ (both associated with the eigenvalue $e^{P(T,\varphi)}$) is the unique equilibrium state $\mu_{\varphi}$ (up to a multiplicative constant). Moreover, the spectral properties are of significant importance, serving as a foundation for further study of statistical properties and limit theorems. 

 Ergodic optimization originated in the 1990s from the works of Hunt and Ott \cite{HO96a,HO96b}, with motivation from control theory \cite{OGY90, SGYO93}, and the Ph.D.\ thesis of Jenkinson \cite{Je96}. Ergodic optimization seeks to understand \emph{maximizing measures}. For a continuous map $T\: X\to X$ on a compact metric space $X$, let $\cM(X,T)$ denote the set of $T$-invariant Borel probability measures on $X$, and define the \emph{maximal potential energy} of a continuous function $A\:X\to\R$ (known as the potential function) to be
 \begin{equation*}
 	\energy\coloneqq\sup\biggl\{\int \! A \, \mathrm{d} \mu:\mu\in \cM(X,T)\biggr\}.
 \end{equation*}
 The supremum is attained due to the weak$^*$-compactness of $\cM(X,T)$. Any measure $\mu\in \cM(X,T)$ that satisfies $\int\! A \, \mathrm{d}\mu=\energy$ is called a \emph{maximizing measure} for $T$ and $A$, and the (nonempty) set of such measures is denoted by 
 \begin{equation*}
 	\cM_{\max}(T,A)\=\biggl\{\mu\in \cM(X,T):\int \! A\, \mathrm{d}\mu=\energy\biggr\}.
 \end{equation*}
Bousch \cite{Bou00} proposed to consider fixed points of an operator $\cL_A$, which we call the Bousch operator for potential $A$. These fixed points are used to reveal the support of maximizing measures of $A$. The operator is also known as the Bousch--Lax operator or the Lax operator in the literature since an analogous construction gives the \emph{Lax--Oleinik semi-groups} in the context of Hamiltonian systems. For comprehensive surveys on ergodic optimization, see Jenkinson \cite{Je06, Je19} and Bochi \cite{Boc18}.  

In the aforementioned step~(i), we extend the theory of tropical functional analysis to our context, namely, the space of continuous functions $C(X,\R)$. A notable difference from the conventional functional analysis is that \emph{density functions} take the place of measures in our tropical functional analysis so that we define the \emph{tropical adjoint operator} $\cL_A^{\ostar}$ (of $\cL_A$) on the space of density functions $D_{\max}(X)$ (see (\ref{eq: defi of the density space})). 

In step~(ii), we seek to understand the existence and uniqueness of the tropical eigen-densities of $\cL_A^{\ostar}$. In general the eigen-densities are not unique but we are able to establish representations of them via the Aubry set and the Ma\~{n}\'{e} potential. These two concepts originate from the weak KAM theory in Lagrangian systems. For more about weak KAM theory, see e.g.,~\cite{CI99} and \cite{KZ20}. We remark that the tropical adjoint operator $\cL_{A}^{\ostar}$ can be seen as the counterpart of the backward Lax--Oleinik semi-group.

In step~(iii), as we will see in Section~4, the logarithmic-type (zero-temperature) limit of the Ruelle operators is the Bousch operator and the accumulation points of eigenfunctions (resp.~eigenmeasures) of the Ruelle operators (resp.~the adjoint Ruelle operators) under logarithmic scaling are the tropical eigefunctions (resp.~eigen-densities) of the Bousch operator (resp.~the adjoint Bousch operator).

Now the uniqueness of tropical eigen-objects directly entails the zero-temperature large deviation principle for equilibrium states. We demonstrate a sufficient condition for the uniqueness of tropical eigen-objects that holds for a generic H\"{o}lder potential and derive a characterization theorem for the large deviation principle within this framework.

We remark that although our results are stated for open, continuous, and distance-expanding maps, they can be extended to more general settings with appropriate modifications.


\medskip
\noindent{\bf Tropical algebra.}
We consider $\Rm \= \R\cup\{+\infty,\,-\infty\}$ equipped with the tropical (max-plus) algebra:
\begin{equation*}
	x\oplus y \= \max\{x,\,y\},\quad x\otimes y \=  x+y,\quad\text{for } x,y\in\Rm.
\end{equation*}
Here $-\infty$ is seen as the tropical zero element, and we adopt the convention that $-\infty\otimes+\infty=-\infty$. The basis $\{(a,b),[-\infty,a),(b,+\infty]: a,b\in\R\}$ generates the desired topology on $\Rm$. Define $\sup\emptyset \= -\infty$.

Let $C(X,\mathbb{K})$ be the space of continuous functions on a topological space $X$ valued in a topological space $\mathbb{K}$. Let $\Rm^{X}$ denote the space of $\Rm$-valued functions on $X$. The space $\Rm^X$ with $(u\oplus v)(x) \=  u(x)\oplus v(x)$ for all $u,v\in\Rm^X$ and $(\lambda\otimes u)(x) \= \lambda\otimes u(x)$ for all $u\in\Rm^X$ and $\lambda\in\Rm$ is a $\Rm$-semimodule and it has the natural order: $u\eqslantless v$ if and only if $u(x)\leq v(x)$ for all $x$ in $X$, i.e., $u\eqslantless v$ if and only if $u\oplus v=v$. For each subset $U\subseteq\Rm^X$, we use $\oplusu{u\in U} u$ to denote the pointwise supremum in $\Rm^X$. We write $(u\otimes v)(x)\=u(x)\otimes v(x)$ for all $u,v\in\Rm^X$.

In the sequel, $\oplus$ always means $\sup$. We use $\mathbbold{0}_X$ and $\mathbbold{1}_X$ to represent the constant zero and one functions on $X$, respectively. We use $\N$ to denote the set of positive integers.

Note that $-\infty$ is the zero element (i.e., the additive identity) of $\Rm$, $0$ is the multiplicative identity element of $\Rm$, and the constant $-\infty$ function is the zero element of $\Rm^X$.

\medskip
\noindent{\bf Distance-expanding maps.}
A map $T$ on a topological space $X$ is called a \emph{covering map} if $T$ is open, continuous, surjective, and is a local homeomorphism. On a metric space $(X,d)$,  a map $T$ is said to be  \emph{distance-expanding} if there exist constants $\lambda>1$ and $\eta>0$ such that $d(x,y)\leq 2\eta$ implies $d(Tx,Ty)\geq\lambda d(x,y)$ for $x,y\in X$. We say that a covering map $T$ on a metric space $(X,d)$ is an \emph{expanding covering map} if $T$ is distance-expanding.

This article primarily addresses expanding covering maps on compact metric spaces, though many results apply to non-surjective cases. Note that for a compact metric space $(X,d)$, the map $T\: X\to X$ constitutes an expanding covering map if and only if it is open, continuous, surjective\footnote{The surjectivity condition guarantees that the Bousch operator $\mathcal{L}_{A}$ preserves $C(X,\R )$, which is often essential as the tropical dual space of $C(X,\R)$ obtained through the tropical Riesz representation theorem (established in Section~\ref{s: tropical functional analysis}) precisely matches the properties required for large deviation rate functions -- a matching that fails for $C(X,\R \cup\{-\infty\})$. Additionally, when studying invariant objects like equilibrium states (which are inherently supported on non-wandering sets), points violating surjectivity can be automatically excluded.}, and distance-expanding (cf.~\cite[Section~4.1]{PU10}). Transitivity is generally not required and we impose this condition only where necessary.

 Let $\aholder$ denote the space of $\alpha$-H\"{o}lder continuous functions $\varphi \: X\rightarrow\R$ with respect to the metric $d$ for $\alpha\in(0,1]$. For a metric space $(X,d)$, denote $B(x,r) \= \{y\in X:d(x,y)<r\}$ for all $x\in X$ and $r>0$. We use $\N$ to denote the set of positive integers and $\N_0 \=\N\cup\{0\}$.

\medskip
\noindent{\bf Ruelle operators and Bousch operators.}
For a real-valued continuous function $A\in C(X,\R)$, the operator $\cR_{A} \: C(X,\R)\rightarrow C(X,\R)$ given by
\begin{equation*}
 u\mapsto \cR_{A}(u)(x)\coloneqq\sum_{y\in T^{-1}(x)}u(y)e^{{A}(y)}
\end{equation*} is called the \emph{Ruelle operator} for potential $A$. For a transitive expanding covering map $T\:X\rightarrow X$ and an $\alpha$-H\"{o}lder continuous potential $A\in\aholder$, it is well known that $\cR_{A}$ (resp.\ its adjoint operator $\cR_{A}^*$) has a unique eigenfunction (resp.\ eigenmeasure) up to a constant associated with the eigenvalue of maximal modulus, i.e., $e^{P(T, A)}$, where $P(T, A)$ is the topological pressure of $T$ with respect to the potential $A$. Moreover, if $m_A$ is a Borel probability measure that satisfies $\cR_A^*(m_A)=e^{P(T,A)}m_A$, and a function $u_A$ satisfies $\int \! u_A \, \mathrm{d} m_A=1$ and $\cR_A(u_A)=e^{P(T,A)}u_A$, then 
\begin{equation*}
	\mu_A\=u_A\cdot m_A
	\end{equation*} is the unique equilibrium state. Here the transitivity assumption guarantees the uniqueness of $u_A$ and $m_A$ and implies that $u_A$ is strictly positive. Let $\widetilde{\cR}_{A}(u) \= \frac{1}{e^{P(T,A)}u_A}\cR_{A}(uu_A)$, $u\in C(X,\R)$, be the normalized Ruelle operator. Note that $\widetilde{\cR}_A$ is just the Ruelle operator for potential 
\begin{equation*}\label{eq: defi of altered potential}
A+\log u_A-\log u_A\circ T-P(T,A)
\end{equation*} 
and $\widetilde{\cR}_{A}(\mathbbold{1}_X)=\mathbbold{1}_X,\widetilde{\cR}_{A}^*(\mu_{A})=\mu_{A}$. See e.g., \cite[Chapters~3 and 5]{PU10} for more details.

To state the definition of the tropical adjoint Bousch operator, we need the following definition of the space of densities $D_{\max}(X)$:
\begin{equation}\label{eq: defi of the density space}
D_{\max}(X)\=\{b \: X\rightarrow\R\cup\{-\infty\} : b \text{ is upper semi-continuous}\}\cup\{+\infty\}.
\end{equation}

\begin{definition}[Bousch operator and its tropical adjoint]\label{d: definition of the two operator}
Let $T\:X\rightarrow X$ \assum~ and $A\in C(X,\R )$.
The \emph{Bousch operator} $\cL_{A} : \Rm^X \rightarrow \Rm^X$ for potential $A$ is defined by
\begin{equation*}
	\cL_{A}(u)(x) \= \oplusu{y\in T^{-1}(x)}(u(y)\otimes A(y))=\sup_{y\in T^{-1}(x)}\{u(y)+A(y)\}
\end{equation*} 
for all $u\in C(X,\R)$ and $x\in X$. We define its \emph{tropical adjoint operator} $\cL_A^{\ostar}\: D_{\max}(X)\to D_{\max}(X)$ acting on the space of densities by
\begin{equation}\label{eq:definition of tropical adjoint}
    \cL_A^{\ostar}(b)(x)\=b(T(x))\otimes A(x)=b(T(x))+A(x)
\end{equation}for all $x\in X$ and $b\in D_{\max}(X)$.
\end{definition}

\begin{remark}
Recall that $\sup\emptyset=-\infty$. The Bousch operator $\cL_A$ maps $C(X,\R\cup\{-\infty\})$ into itself and under the surjectivity assumption, it further preserves $C(X,\R)$ (see Proposition~\ref{p:L_continuous}). It immediately follows that $\cL_A$ is a tropical linear map (Definition~\ref{d:max-plus linear}), cf.~\cite[Lemma~6.1]{LZ25}.
For a justification for our definition of $\cL_A^{\ostar}$ in (\ref{eq:definition of tropical adjoint}), see Remark~\ref{r:reason for defi of the dual operator}. 
\end{remark}

We define the following related notions:
\begin{enumerate}[label=\rm{(\roman*)}]	
	\smallskip
	\item $Q\in\Rm$ is a \emph{tropical eigenvalue} of $\cL_A$ if there exists a \emph{tropical eigenfunction} $u\in C(X,\R )$ of $\cL_A$ (associated with eigenvalue $Q$) satisfying $\cL_A(u)=u\otimes Q$.
	\smallskip
	\item $u\in C(X,\R)$ is a \emph{sub-action} for the potential $A$ if $\cL_{A}(u)\eqslantless u\otimes\energy$.
	\smallskip
	\item $Q\in\Rm$ is a \emph{tropical eigenvalue} of $\cL_A^{\ostar}$ if there exists a \emph{tropical eigen-density} $b\in D_{\max}(X)\smallsetminus\{+\infty,\,-\infty\}$ of $\cL_A^{\ostar}$ (associated with eigenvalue $Q$) satisfying $\cL_A^{\ostar}(b)=b\otimes Q$.
\end{enumerate}
As we will see in Theorem~\ref{t:existence and generic uniqueness}, the tropical eigenvalue for $\cL_A$ (resp.\ $\cL_{A}^{\ostar}$) can only be $\energy$.

In ergodic optimization, a tropical eigenfunction of $\cL_A$ is called a ``calibrated sub-action'' (see e.g., \cite{Ga17}).

We call the potential $A\in C(X,\R)$ \emph{uniquely maximizing} if $\cM_{\max}(T,A)$ consists of a single measure. Note that the set of uniquely maximizing potentials in $\aholder$ is \emph{generic} (i.e., contains a countable intersection of open and dense subsets of $\aholder$). This fact was proved in a slightly stronger form by Contreras, Lopes, and Thieullen \cite{CLT01} for $C^1$ expanding maps and extended to continuous maps on compact metric spaces and more general potentials in \cite{Je06}. More recently, Contreras \cite{Co16} proved that for an open and dense subset of $A$ in $\aholder$, $\cM_{\max}(T,A)$ consists of a single periodic measure for open Lipschitz distance-expanding maps $T$, which is known as the Yuan--Hunt conjecture \cite{YH99}. Contreras' work is then followed by \cite{HLMXZ19} completing the uniformly hyperbolic case. Going beyond the setting of uniform hyperbolicity, Li \& Zhang \cite{LZ25} proved the analogous result for expanding Thurston maps.

\medskip
\noindent{\bf The zero-temperature large deviation principle.}
For all $r>0$, a family of probability measures $\{\nu_{\beta}\}_{\beta\in(r,+\infty)}$ on a topological space $X$ satisfies the \emph{large deviation principle as $\beta\to+\infty$} if there exists a lower semi-continuous function $I \: X \rightarrow [0,+\infty]$ (called the \emph{rate function}) for which the following two inequalities hold:
\begin{align}
		\liminf_{\beta\to+\infty}\frac{1}{\beta}\log\nu_{\beta}(\cG)&\geq-\inf_{x\in \cG}I(x),\quad\text{ for every open set } \cG\subseteq X,   \label{e:LDP_lower_bound}\\
		\limsup_{\beta\to+\infty}\frac{1}{\beta}\log\nu_{\beta}(\cK)&\leq-\inf_{x\in \cK}I(x),\quad\text{ for every closed set } \cK\subseteq X.  \label{e:LDP_upper_bound}
\end{align}
Here $\beta$ is called the \emph{inverse temperature}.
	Note that $I(x)\in [0,+\infty]$ for all $x\in X$, which follows from the fact that $\nu_{\beta}$ is a probability measure.	


\medskip
We are now ready to state our main results.

\begin{thmx}[Typical zero-temperature large deviation principle]\label{t: uniquely maximizing implies the large deviation principle}
	Let $T\:X\to X$ be an open continuous distance-expanding map on a compact metric space $X$, and $\alpha \in (0,1]$. Then for each $\alpha$-H\"{o}lder continuous function $A\:X\to\R$ that has a unique maximizing measure, the following property holds:

    \smallskip
    
    If for each $\beta\in(1,+\infty)$ let $\mu_{\beta A}$ be an arbitrary equilibrium state for $\beta A$, then $\{\mu_{\beta A}\}_{\beta\in(1,+\infty)}$ satisfies the large deviation principle as $\beta\to +\infty$.

    \smallskip

    In particular, in the space of real-valued $\alpha$-H\"older continuous functions on $X$ equipped with the H\"older norm, the above property holds for a generic subset of $A$, and if in addition $T$ is Lipschitz continuous, then the above property holds for an open and dense subset of $A$.
\end{thmx}

One can describe the rate function for the above large deviation principle in terms of the tropical eigenfunctions of $\cL_A$ and tropical eigen-densities of its adjoint $\cL_A^{\ostar}$ (see ~Corollary~\ref{c:uniquely maximizing implies LDP} and Subsection~\ref{ss: proof of A and B}).

For a transitive expanding covering map $T\: X\to X$ on a compact metric space $X$, $A\in\aholder$, and all $\beta>0$, let $m_{\beta A}$ be the unique eigen-probability of $\cR_{\beta A}^*$ associated with eigenvalue $e^{P(T,\beta A)}$, $u_{\beta A}$ be the unique eigenfunction of $\cR_{\beta A}$ associated with eigenvalue $e^{P(T,\beta A)}$ satsifying $\int_Xu_{\beta A}\,\mathrm{d}m_{\beta A}=1$, and $\mu_{\beta A}\=u_{\beta A}\cdot m_{\beta A}$ be the unique equilibrium state.

Denote
\begin{equation}\label{eq:defi of functional at positive temperature} 
	l_{\beta}^{\mu}(f) \= \frac{1}{\beta}\log\int \! e^{\beta f} \, \mathrm{d} \mu_{\beta A}\quad\text{and}\quad
	l_{\beta}^{m}(f) \= \frac{1}{\beta}\log\int \! e^{\beta f} \, \mathrm{d} m_{\beta A},
\end{equation}
for $f\in C(X,\R)$.
We call accumulation points (in the compact-open topology) of $\bigl\{\frac{1}{\beta}\log u_{\beta A}\bigr\}_{\beta\in(1,+\infty)}$, $\bigl\{l_{\beta}^\mu(\cdot)\bigr\}_{\beta\in(1,+\infty)}$, and $\bigl\{l_{\beta}^m(\cdot)\bigr\}_{\beta\in(1,+\infty)}$, as $\beta\to+\infty$,  ``logarithmic-type zero-temperature limits''.

\begin{thmx}[Characterization of the zero-temperature large deviation principle]\label{t: equi conditions}
Let $T\:X\to X$ be a transitive expanding covering map, and $A\: X\to \R$ be $\alpha$-H\"{o}lder continuous with $\alpha \in (0,1]$. If the family of equilibrium states $\{\mu_{\beta A}\}_{\beta\in(1,+\infty)}$ satisfies the large deviation principle as $\beta\to+\infty$, then the following statements are true:
\begin{enumerate}[label=\rm{(\roman*)}]	
\smallskip
\item $\bigl\{\frac{g_{\beta}}{\beta}\bigr\}_{\beta\in(1,+\infty)}$ uniformly converges as $\beta\to+\infty$, where 
		\begin{equation*}
			g_\beta\=\beta A +\log u_{\beta A}-\log u_{\beta A}\circ T-P(T,\beta A).
		\end{equation*}
\item $\bigl\{\frac{1}{\beta}\log u_{\beta A}\bigr\}_{\beta\in(1,+\infty)}$ uniformly converges as $\beta\to+\infty$.
\smallskip\item $\bigl\{l_{\beta}^m(\cdot)\bigr\}_{\beta\in(1,+\infty)}$ uniformly converges on every compact subset of $C(X,\R)$ as $\beta\to+\infty$.
\end{enumerate}

Conversely, if both statements~{\rm(ii)} and~{\rm(iii)} are true, then $\{\mu_{\beta A}\}_{\beta\in(1,+\infty)}$ satisfies the large deviation principle as $\beta\to+\infty$.
\end{thmx}

A version of Theorem~\ref{t: equi conditions} was proved in \cite{Me18} for full shifts via methods depending crucially on symbolic dynamics. Our proofs of Theorems~\ref{t: uniquely maximizing implies the large deviation principle} and \ref{t: equi conditions} require developing a tropical analog of the thermodynamic formalism theory, with Theorem~\ref{t:existence and generic uniqueness} being its main component.

\begin{thmx}[Existence and generic uniqueness of tropical eigenfunctions and tropical eigen-densities]\label{t:existence and generic uniqueness}
Let $T\:X\rightarrow X$ \assum, and $A\:X\to\R$ be $\alpha$-H\"{o}lder continuous with $\alpha \in (0,1]$. Then the following statements are true:
\begin{enumerate}[label=\rm{(\roman*)}]	
	\smallskip
 \item {\rm(Uniqueness of tropical eigenvalue.)} The number $\energy$ is the maximal tropical eigenvalue of $\cL_A^{\ostar}$. If $\cL_A$ admits a tropical eigenvalue, then it is equal to $\energy$. Moreover, if $T$ is transitive, then $\energy$ is the unique tropical eigenvalue of $\cL_A^{\ostar}$.
 \smallskip
\item {\rm(Existence of tropical eigenfunction.)} For each $u\in\aholder$, define
\begin{equation*}
v_u(x)\coloneqq\limsup_{n\to+\infty}\cL_{\oA}^n(u)(x)
\end{equation*}
for each $x$ in $X$, where $\oA\=A-\energy$. Then $v_u\in C(X,\R\cup\{-\infty\})$ and $\cL_A(v_u)=v_u\otimes Q(T,A)$. If $v_{\mathbbold{0}_X}(x)=-\infty$ for some $x\in X$, then $\cL_A$ has no tropical eigenfunction. If $v_{\mathbbold{0}_X}\in C(X,\R)$, then $v_u\in\aholder$ is a tropical eigenfunction of $\cL_A$ for all $u\in\aholder$. Moreover, if $T$ is transitive, then $v_{\mathbbold{0}_X}\in C(X,\R)$.
\smallskip
\item {\rm(Generic uniqueness of tropical eigenfunction.)} For a generic potential $A$ in $\aholder$, if there exists a tropical eigenfunction of $\cL_A$, then it is unique up to a tropical multiplicative constant.
\smallskip
\item {\rm(Existence of tropical eigen-density.)} There exists a tropical eigen-density of $\cL_{A}^{\ostar}$ associated with eigenvalue $\energy$ different from constant density functions $-\infty$ and $+\infty$.
\smallskip
\item {\rm(Generic uniqueness of tropical eigen-density.)} For a generic potential $A$ in $\aholder$, $\cL_A^{\ostar}$ has a unique tropical eigen-density associated with eigenvalue $\energy$ up to a tropical multiplicative constant.
\end{enumerate}
\end{thmx}

We give original proofs of (ii) and (iv) following the correspondence between thermodynamic formalism and its tropical counterpart. For (i) and (ii), we remark that $v_u$ may take $-\infty$ and $\cL_A^{\ostar}$ may have other tropical eigenvalues when $T$ is not transitive. This corresponds to the fact that some eigenfunction of the Ruelle operator may take $0$ and the Ruelle operator may have more than one positive eigenvalues when $T$ is not transitive. 

For (iv), recall that the existence of the eigenmeasure of $\cR_{\varphi}^*$ follows from the Schauder--Tychonoff fixed point theorem. Thus, we use the completeness of the tropical space $\widehat{C(X,\R)}$ and apply a version of Perron's method (see Proposition~\ref{p:existence of eigenmeasure}). The other ingredient for (iv) is the existence of a continuous real-valued sub-action, known as the Ma\~{n}\'{e} lemma. In this article, we provide a direct proof of it without the transitivity and surjectivity assumptions, which only exploits a shadowing argument (see Proposition~\ref{p: mane lemma}). 

In order to study the generic uniqueness of tropical eigenfunctions and tropical eigen-densities, we establish the following representations in terms of the Aubry set and the Ma\~{n}\'{e} potential.
\begin{thmx}[Representation of tropical eigenfunctions and eigen-densities]\label{t:representation}
	Let $T\:X\rightarrow X$ \assum~, and $A\:X\to\R$ be $\alpha$-H\"{o}lder continuous with $\alpha \in (0,1]$. Let $\phi_A(\cdot,\cdot) \: X\times X\rightarrow\R\cup\{-\infty\}$ be the Ma\~{n}\'{e} potential associated with $A$ and $\Omega_A$ be the Aubry set with respect to $A$. Then the following statements are true:
\begin{enumerate}[label=\rm{(\roman*)}]	
	\smallskip
	\item If there exists a tropical eigenfunction $v$ of $\cL_A$, then the identity
	\begin{equation*}
	v(y)=\oplusu{x\in\Omega_A}(v(x)\otimes\phi_A(x,y))
	\end{equation*}
	holds for every $y$ in $X$.
	\smallskip
	\item  For every tropical eigen-density $b$ of $\cL_A^{\ostar}$, $b(\cdot)$ is equivalent to 
	$\oplusu{y\in\Omega_A}(\phi_A(\cdot,y)\otimes b(y))$, i.e., 
	\begin{equation*}
		\oplusu{x\in X}(f(x)\otimes b(x))=\oplusu{x\in X,y\in\Omega_A}(f(x)\otimes\phi_A(x,y)\otimes b(y))
	\end{equation*}	
	for every $f\in C(X,\R)$. 
	\smallskip
	\item If $A$ is uniquely maximizing, then the entries of  $\{\phi_A(x,\cdot)\}_{x\in\Omega_A}$ (resp.\ $\{\phi_A(\cdot,y)\}_{y\in\Omega_A}$) are the same up to a tropical multiplicative constant. The entries of $\{\phi_A(\cdot,y)\}_{y\in\Omega_A}$ are tropical eigen-densities of $\cL_A^{\ostar}$. If $T$ is in addition transitive, then the entries of $\{\phi_A(x,\cdot)\}_{x\in\Omega_A}$ are tropical eigenfunctions of $\cL_A$.
\end{enumerate}
\end{thmx}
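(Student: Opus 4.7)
For any $x \in X$, unwinding the fixed-point relation $v = \cL_{\oA}^n(v)$ gives $v(y) \geq v(z) + S_n \oA(z)$ for every $z \in T^{-n}(y)$; combined with the definition of $\phi_A(x,y)$ as a sup over approximate orbits from near $x$ to $y$ and continuity of $v$, this yields $v(y) \geq v(x) + \phi_A(x,y)$, and restriction to $x \in \Omega_A$ gives the $\geq$ direction. For the reverse, pick $z_n \in T^{-n}(y)$ attaining $v(y) = \sup_{z \in T^{-n}(y)}(v(z) + S_n \oA(z))$. Boundedness of $v$ forces $\frac{1}{n}S_n \oA(z_n) \to 0 = Q(T,\oA)$, so empirical measures on the orbit $z_n, Tz_n, \ldots, T^n z_n = y$ subsequentially converge to an $\oA$-maximizing probability measure, whose support lies in $\Omega_A$. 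A shadowing-and-calibration argument (possibly replacing the initial segment of the orbit by a nearby trajectory shadowing $\Omega_A$ and reconcatenating) produces an accumulation point $x_\infty \in \Omega_A$ with $v(y) \leq v(x_\infty) + \phi_A(x_\infty, y)$.

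\textbf{Part (ii).} Reparametrizing $y = Tz$ in the pairing $\langle g, b \rangle := \sup_x(g(x) + b(x))$ yields the adjoint identity $\langle \cL_{\oA}(g), b \rangle = \langle g, \cL_{\oA}^{\ostar}(b) \rangle$, and iteration with $\cL_{\oA}^{\ostar}(b) = b$ gives $\langle f, b \rangle = \langle \cL_{\oA}^n(f), b \rangle$ for every $n$ and $f \in C(X,\R)$. For $f \in \Lip(X, d^\alpha)$, Theorem~\ref{t:existence and generic uniqueness}(i) together with uniform boundedness and equicontinuity of $\{\cL_{\oA}^n(f)\}$ provides via Arzel\`{a}--Ascoli a uniformly convergent subsequence $\cL_{\oA}^{n_k}(f) \to w$, where $w$ is a Lipschitz tropical eigenfunction of $\cL_A$, so $\langle f, b \rangle = \langle w, b \rangle$. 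A parallel argument to Part (i) establishes the backward Ma\~{n}\'{e} representation $w(x) = \sup_{z \in X}(f(z) + \phi_A(z,x))$. Combining (i) applied to $w$ with the super-solution inequality $b(y') \geq \phi_A(y',y) + b(y)$ for $y' \in \Omega_A$ and $y \in X$---obtained by propagating $b(z) = b(Tz) + \oA(z)$ along an orbit from near $y'$ to $y$ realizing $\phi_A(y',y)$, on an upper semi-continuous representative of $b$---shows that $\sup_y(w(y) + b(y)) = \sup_{y \in \Omega_A}(w(y) + b(y))$. Substituting the backward representation of $w$ yields
\[
	\langle f, b \rangle = \sup_{y \in \Omega_A,\, z \in X} \bigl(f(z) + \phi_A(z,y) + b(y)\bigr),
\]
the claimed equivalence on $\Lip(X, d^\alpha)$; uniform density of Lipschitz functions in $C(X,\R)$ together with $\|\cdot\|_\infty$-continuity of the pairing extends it to all $f \in C(X,\R)$.

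\textbf{Part (iii).} Assume $A$ is uniquely maximizing. A standard argument in ergodic optimization shows that the Ma\~{n}\'{e} cocycle closes on the Aubry set, $\phi_A(x_1, x_2) + \phi_A(x_2, x_1) = 0$ for all $x_1, x_2 \in \Omega_A$ (otherwise a non-closing cycle yields two distinct maximizing measures). Combined with the tropical super-additivity $\phi_A(x_1, y) \geq \phi_A(x_1, x_2) + \phi_A(x_2, y)$ and its symmetric counterpart, this forces $\phi_A(x_1, \cdot) - \phi_A(x_2, \cdot) \equiv \phi_A(x_1, x_2)$ and dually $\phi_A(\cdot, y_1) - \phi_A(\cdot, y_2) \equiv \phi_A(y_2, y_1)$, which is proportionality up to a tropical multiplicative constant. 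That each $\phi_A(x, \cdot)$ with $x \in \Omega_A$ is an eigenfunction of $\cL_A$ and each $\phi_A(\cdot, y)$ with $y \in \Omega_A$ an eigen-density of $\cL_A^{\ostar}$ follows by splitting the defining sup for $\phi_A$ at the initial (resp.\ terminal) iterate and matching against the definitions of $\cL_{\oA}$ and $\cL_{\oA}^{\ostar}$.

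\textbf{Main obstacle.} The hardest step is the support-restriction in Part (ii), i.e., the super-solution inequality $b(y') \geq \phi_A(y',y) + b(y)$ for $y' \in \Omega_A$. Since $b$ is defined only as a density (equivalence class) and need not be pointwise continuous, one must select an upper semi-continuous representative and carefully propagate the iterated identity $b(z) = b(T^n z) + S_n \oA(z)$ along finite orbits from near $y'$ to $y$ realizing the Ma\~{n}\'{e} cost, then pass to the limit via compactness. A closely related secondary point is the interchange of $\limsup_n \cL_{\oA}^n(f)$ with the pairing $\langle \cdot, b \rangle$, for which equicontinuity from the H\"{o}lder hypothesis on $f$ (rather than mere continuity) is indispensable.
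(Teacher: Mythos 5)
Parts (i) and (iii) essentially track the paper's arguments, though for (i) your route via empirical measures and an unspecified ``shadowing-and-calibration'' step is looser than the paper's proof of Proposition~\ref{p:eigenfunction}: there one recursively builds a \emph{single} backward orbit $\{y_k\}$ with $T(y_{k+1})=y_k$ and $v(y_k)=v(y_{k+1})+\oA(y_{k+1})$, and any accumulation point of $\{y_k\}$ is then automatically an Aubry point at which the bound $v(y)\le v(x_y)+\phi_A(x_y,y)$ drops out, with no shadowing argument needed.

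The real gap is in Part (ii). The asserted ``backward Ma\~{n}\'{e} representation'' $w(x)=\sup_{z\in X}\bigl(f(z)+\phi_A(z,x)\bigr)$ for $w=v_f=\limsup_n\cL_{\oA}^n(f)$ is \emph{false} for general $x\in X$. The inequality $w(x)\le\sup_z(f(z)+\phi_A(z,x))$ is fine since $S_n\oA(z)\le\phi_A(z,T^n z)$, but the reverse fails when $x\notin\Omega_A$: the supremum defining $\phi_A(z,x)$ may be realized only by a short orbit segment, whereas $\limsup_n\cL_{\oA}^n(f)(x)$ only sees arbitrarily long segments terminating exactly at $x$; if $x$ is not an Aubry point one cannot ``wait'' near $x$ at negligible cost, so the short cheap route into $x$ is lost in the limit. (A three-symbol SFT with two fixed points, only one of them Aubry, together with a bump $f$ on a transient cylinder, exhibits strict inequality at the non-Aubry fixed point.) You only apply the formula at $y\in\Omega_A$, where it is in fact true, but the proof is \emph{not} a parallel of Part (i): one must concatenate a near-optimal segment from $z$ to $y$ with arbitrarily long near-loops through $y$, using $y\in\Omega_A$ and essentially Lemma~\ref{l:long trajectory for Aubry point}. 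The paper avoids all of this: since $\phi_A(\cdot,y)$ is itself an eigen-density (Lemma~\ref{l:mane potential gives eigenmeasures}), the reduction $\langle f,b\rangle=\langle v_f,b\rangle$ applies verbatim to the right-hand side with $b$ replaced by $\phi_A(\cdot,y)$, and the identity is then closed by a chain of equalities for the eigenfunction $v_f$ using only Propositions~\ref{p:Mane_potential_properties} and \ref{p:eigenfunction}. You should either supply the missing restricted backward representation on $\Omega_A$, or switch to this reduction.
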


The Ma\~{n}\'{e} potential and the Aubry set are recalled in Definitions~\ref{d:Mane potential} and \ref{d:Aubry set}, respectively. While the parts of Theorem~\ref{t:representation} on eigenfunctions in the context of subshifts of finite type have appeared in \cite[Propositions~6.2 and 6.7]{Ga17}, we take the opportunity to generalize it to uniformly expanding systems without transitivity and establish novel counterparts for eigen-densities. It is worth mentioning that our proof of Theorem~\ref{t:representation}~(ii) relies on the constructive result Corollary~\ref{c:more_of_constructions} (i.e., Theorem~\ref{t:existence and generic uniqueness}~(i)) for tropical eigenfunctions.

The logarithmic-type zero-temperature limits can be seen as a bridge connecting thermodynamic formalism objects and their tropical counterparts.

Recall that a family of real-valued continuous functions on $X$ (resp.\ functionals on $C(X,\R)$) is a \emph{normal family} if, for every sequence of functions (resp.\ functionals) of this family, there exists a subsequence that is uniformly converging on every compact subset of $X$ (resp.\ $C(X,\R)$). 

In Theorems~\ref{t:zero-temperature limit} and \ref{t:Log type limit for equilibrium states and altered potentials}, all the limits of functions (on $X$) mentioned are uniform, and all the limits of functionals (on $C(X,\R)$) mentioned are pointwise.

\begin{thmx}[Logarithmic-type zero-temperature limits of eigenfunctions and eigendensities]\label{t:zero-temperature limit}
	Let $T\:X\rightarrow X$ be a transitive expanding covering map on a compact metric space $X$, and $A\:X\to\R$ be $\alpha$-H\"{o}lder continuous with $\alpha \in (0,1]$. Then the following statements are true:
	\begin{enumerate}[label=\rm{(\roman*)}]	
		\smallskip
		\item The family $\bigl\{\frac{1}{\beta}\log u_{\beta A}\bigr\}_{\beta\in(1,+\infty)}$ is normal and the (uniform) limit of every convergent subsequence $\bigl\{\frac{1}{\beta_n}\log u_{\beta_nA}\bigr\}_{n\in\N}$ with $\beta_n\to+\infty$ as $n\to+\infty$ is a tropical eigenfunction of $\cL_A$. 
  
		\smallskip
		\item The family $\bigl\{l_{\beta}^m(\cdot)\bigr\}_{\beta\in(1,+\infty)}$ is normal and the (pointwise) limit of every convergent subsequence $\bigl\{l_{\beta_n}^m(\cdot)\bigr\}_{n\in\N}$ with $\beta_n\to+\infty$ as $n\to+\infty$ is a tropical linear functional whose density is a tropical eigen-density of $\cL_A^{\ostar}$.
  
		\smallskip
        \item The family $\bigl\{l_{\beta}^{\mu}(\cdot)\bigr\}_{\beta\in(1,+\infty)}$ is normal and the (pointwise) limit of every convergent subsequence $\bigl\{l_{\beta_n}^{\mu}(\cdot)\bigr\}_{n\in\N}$ with $\beta_n\to +\infty$ as $n\to+\infty$ is a tropical linear functional whose density is the tropical product of some tropical eigenfunction of $\mathcal{L}_A$ and some tropical eigen-density of $\cL_A^{\ostar}$.
	\end{enumerate}
\end{thmx}
Here $l_{\beta}^{m}$ and $l_{\beta}^{\mu}$ are defined in (\ref{eq:defi of functional at positive temperature}) and $u_{\beta A}$ is the eigenfunction of the Ruelle operator $\cR_{\beta A}$ defined in Section~\ref{sct_Introduction}. While Theorem~\ref{t:zero-temperature limit}~(i) in some settings is known (cf.~\cite{Sa99}), we establish novel counterparts for eigenmeasures and equilibrium states ((ii) and (iii)).

Theorem~\ref{t:zero-temperature limit} together with Theorem~\ref{t:representation} leads to the corresponding rate functions for large deviation principles. In the context of subshifts of finite type, \cite{BLT06} established a large deviation principle for $\{\mu_{\beta A}\}_{\beta\in(1,+\infty)}$ under the assumption that the potential $A$ is uniquely maximizing using the ``dual shift'' technique. Our study of subsequential limits does not rely on this technique and explores the situation when the potential is not uniquely maximizing.

\begin{thmx}[Logarithmic-type zero-temperature limits of equilibrium states and normalized potentials]\label{t:Log type limit for equilibrium states and altered potentials}
	Let $T\:X\rightarrow X$ be a transitive expanding covering map on a compact metric space $X$, and $A\:X\to\R$ be $\alpha$-H\"{o}lder continuous with $\alpha \in (0,1]$. Then the following statements are true:
	\begin{enumerate}[label=\rm{(\roman*)}]	
		\smallskip
		\item The two families $\bigl\{\frac{g_\beta}{\beta}\bigr\}_{\beta\in(1,+\infty)}$ and $\bigl\{l_{\beta}^\mu(\cdot)\bigr\}_{\beta\in(1,+\infty)}$ are normal, where 
		\begin{equation*}
			g_\beta\=\beta A +\log u_{\beta A}-\log u_{\beta A}\circ T-P(T,\beta A).
		\end{equation*}
		\item Suppose $\bigl\{\frac{g_{\beta_k}}{\beta_k}\bigr\}_{k\in\N}$ (uniformly) converges to a function $\widehat{A}\in C(X,\R)$ and $\bigl\{l_{\beta_k}^\mu(\cdot)\bigr\}_{k\in\N}$ (pointwise) converges to $\widehat{l}\:C(X,\R)\rightarrow\R$ with $\beta_k\to+\infty$ as $k\to+\infty$. Then $\widehat{l}$ is a tropical linear functional. Let $\widehat{b}$ be the density of $\widehat{l}$ in $D_{\max}(X)$. Then $\cL_{\widehat{A}}^{\ostar}\bigl(\widehat{b}\bigr)=\widehat{b}$, i.e., $\widehat{b}\circ T+\widehat{A}=\widehat{b}$.
		\smallskip
		\item Suppose $\bigl\{\frac{1}{\beta_k}\log u_{\beta_kA}\bigr\}_{n\in\N}$ (uniformly) converges to a function $v\in C(X,\R)$ with $\beta_k\to+\infty$ as $k\to+\infty$. Then $\bigl\{\frac{g_{\beta_k}}{\beta_k}\bigr\}_{k\in\N}$ (uniformly) converges to $\oA+v-v\circ T$ as $k\to+\infty$.
	\end{enumerate}
\end{thmx}
Here  $D_{\max}(X)$ is defined in (\ref{eq: defi of the density space}) and $\oA\=A-\energy$.

As already mentioned, we consider the analysis of the Bousch operator for some H\"{o}lder potential without the surjectivity and transitivity assumptions. It is worth noting that ``generalized tropical eigenfunctions" of $\cL_A$ may need to take $-\infty$ as we see in Theorem~\ref{t:existence and generic uniqueness}~(ii), while a real-valued (H\"{o}lder continuous) sub-action always exists. The difference lies in the constructions of the two items. ``Generalized tropical eigenfunctions" are the limits of iterations of the Bousch operator while a sub-action is the supremum of iterations of the Bousch operator.

A key feature of the proofs of the above theorems is the application of formalism via tropical algebra and tropical analysis, making the proofs more applicable to other systems. For Theorem~\ref{t:existence and generic uniqueness}~(ii), the tropical linearity of $\cL_A$ implies that $\bigl\{\sup_{x\in X}\cL_A^n(\mathbbold{0}_X)(x)\bigr\}_{n\in\N}$ is a subadditive sequence and the uniform bounded variation $D$ of $\cL_A^n(\mathbbold{0}_X)$ implies that $\bigl\{D-\sup_{x\in X}\cL_A^n(\mathbbold{0}_X)(x)\bigr\}_{n\in\N}$ is also subadditive. For Theorem~\ref{t:existence and generic uniqueness}~(iv), tropical completeness allows an application of a version of Perron's method. For Theorems~\ref{t:zero-temperature limit} and ~\ref{t:Log type limit for equilibrium states and altered potentials}, the subsequential limits of $l_{\beta}^m$ and $l_{\beta}^\mu$ as $\beta\to+\infty$ are tropical continuous functionals and thus the tropical version of the Riesz representation theorem (Proposition~\ref{p:dual space equals completion}) applies. Our study of subsequential limits provides a clear picture when the possible candidates for the limits are not unique. For Theorem~\ref{t:representation}~(ii), for all tropical continuous functionals $l$ and $u\in\aholder$, $l(u)$ is equal to $l(v_u)$, and thus Theorem~\ref{t:representation}~(i) applies.

The other main ingredient for demonstrating Theorem~\ref{t:representation}~(i) and (ii) is the properties of the Aubry set and the Ma\~{n}\'{e} potential. Some of these results have already appeared in the context of expanding maps on the circle and subshifts of finite type (see e.g., \cite{CLT01} and \cite{Ga17}), but since we need to add new ones (namely, Proposition~\ref{p:Mane_potential_properties}~(ii) and Lemma~\ref{l:mane potential gives eigenmeasures}) concerning tropical eigen-densities of $\cL_A^{\ostar}$, we provide the new proofs in Subsection~\ref{ss:Mane_potential} and keep the others in Appendix~\ref{ss: Appendix A}. 

The outline of the article is as follows. In Section~\ref{s: tropical functional analysis}, we introduce and investigate tropical functional analysis notions and results for our investigation, which include the definitions of completeness, tropical dual spaces, and tropical measures. In Section~\ref{s: Bousch operator}, we focus on the Bousch operator in open continuous distance-expanding maps without the surjectivity and transitivity assumptions. In Subsection~\ref{ss:analysis}, we first give a constructive proof of Theorem~\ref{t:existence and generic uniqueness}~(i) (i.e., Proposition~\ref{p:main_construction} and Corollary~\ref{c:more_of_constructions}) and then define the tropical adjoint Bousch operator and prove Theorem~\ref{t:existence and generic uniqueness}~(iii). In Subsection~\ref{ss:Mane_potential}, we recall the concepts of the Aubry set and the Ma\~{n}\'{e} potential and discover that tropical eigen-densities of the tropical dual of the Bousch operator can be represented by the Aubry set and the Ma\~{n}\'{e} potential. Theorem~\ref{t:representation} is proved there. In Section~\ref{s: zero-temperature limits}, we investigate the logarithmic-type zero-temperature limits and establish Theorems~\ref{t:zero-temperature limit} and~\ref{t:Log type limit for equilibrium states and altered potentials} for transitive expanding covering maps. Theorem~\ref{t: equi conditions} is established as a corollary of Theorem~\ref{t:Log type limit for equilibrium states and altered potentials}. By considering the constriction of the map on the set of non-wandering points, we generalize Theorem~\ref{t:zero-temperature limit} to establish Theorem~\ref{t: uniquely maximizing implies the large deviation principle}. We keep the proofs that could be familiar to experts in Appendix~\ref{ss: Appendix A}.

\medskip
\noindent{\bf Acknowledgements.} 
The authors thank Juan Rivera-Letelier for suggestions on references and Mikhail Yu.~Lyubich for his suggestions after listening to the main content of the article during the summer school at Urgench State University, Uzbekistan, in August 2023. The authors were partially supported by BJNSF QY24007, and NSFC Nos.~12471083, 12101017, 12090010, and 12090015.

\section{Tropical functional analysis}\label{s: tropical functional analysis}
In this section, we introduce the notions of completeness, tropical dual spaces, and tropical measures, and prove general tropical functional analysis results.

\subsection{Tropical spaces and tropical dual spaces} Tropical spaces and tropical dual spaces, especially the completion and tropical dual space of $C(X,\R)$, are studied in this subsection. 

We introduce the following notion of completeness and completion, which is different from the notion of ``normal completion'' in \cite[pp.~703--704]{LMS01}.
\begin{definition}\label{d:completion}
	Let $S\subseteq\Rm^X$ be a set of $\Rm$-valued functions on a set $X$. The set $S$ is said to be $complete$ if for each subset $U\subseteq S$, $\oplusu{u\in U}u$ is in $S$. The completion of $S$, denoted by $\widehat{S}$, is defined to be the intersection of all complete subsets of $\Rm^X$ containing $S$.
\end{definition}

Recall that $\oplus$ is defined to be $\sup$, $\oplusu{u\in U}u$ is the pointwise supremum, and a function $u \: X\rightarrow\Rm$ is \emph{upper} (resp.\ \emph{lower}) \emph{semi-continuous} if for every $b\in\Rm$, the set $\{x\in X: u(x)<b\}$ (resp.\ $\{x\in X: u(x)>b\}$) is open.
\begin{rem}
    Let $X$ be a compact metric space. The ``normal completion'' of ${C(X,\R)}$ in \cite{LMS01} is $\operatorname{LSC}(X)\cup\{+\infty,\,-\infty\}$ (i.e., real-valued lower semi-continuous functions and the two constant infinity functions). In contrast, $\widehat{C(X,\R)}$ contains lower semi-continuous functions which take $+\infty$ somewhere but do not equal $+\infty$ everywhere. Indeed, different kinds of completions can be seen as being related to analogs of different norms on (conventional) linear spaces.
\end{rem}

\begin{lemma}\label{l: lower semi-continuous functions are oplus of continuous functions}
Let $X$ be a compact metric space. If $g \:X \rightarrow \R\cup\{+\infty\}$ is lower semi-continuous, then there exists a countably infinite family $\{u_i\in C(X,\R): i\in I\}$ such that $g=\oplusu{i\in I}u_i$. 
\end{lemma}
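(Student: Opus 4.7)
The plan is to take the family $\mathcal{F} \= \{u \in C(X,\R) : u \leq g\}$ itself; since each element of $\mathcal{F}$ lies pointwise below $g$, the inequality $\oplusu{u \in \mathcal{F}} u \preccurlyeq g$ is immediate. The content of the lemma is the reverse inequality, and it suffices to establish the pointwise statement that for every $x_0 \in X$ and every real $c < g(x_0)$ there exists some $u \in \mathcal{F}$ with $u(x_0) \geq c$; then the pointwise supremum at $x_0$ is at least $\sup\{c \in \R : c < g(x_0)\} = g(x_0)$.

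Given such $(x_0, c)$, lower semi-continuity of $g$ makes $\{x \in X : g(x) > c\}$ open, so after shrinking it contains a closed ball $\overline{B(x_0, r)}$ on which $g > c$. I would then use the metric-space Urysohn cutoff
\begin{equation*}
\phi(y) \= \max\bigl(0,\, 1 - d(y, x_0)/r\bigr) \in [0,1],
\end{equation*}
which is continuous with $\phi(x_0) = 1$ and $\phi \equiv 0$ off $B(x_0, r)$, and set $u(y) \= c \phi(y) + m (1 - \phi(y))$ for a real constant $m$ satisfying both $m \leq c$ and $m \leq g$ on $X$. Then $u \in C(X, \R)$, $u(x_0) = c$, and $u \leq g$ everywhere: on $\overline{B(x_0, r)}$ one has $u \leq c < g$ (since $u$ is a convex combination of $c$ and $m \leq c$), and off $B(x_0, r)$ one has $u \equiv m \leq g$. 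Hence $u \in \mathcal{F}$, as required.

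The main obstacle is supplying such a constant $m$, which amounts to $\inf_X g > -\infty$ in the nontrivial case $g \not\equiv +\infty$. This is automatic when $X$ is compact, the setting of the paper's main applications: if $g$ were not bounded below, a sequence $\{x_n\} \subseteq X$ with $g(x_n) \to -\infty$ would admit a subsequential limit $x_* \in X$ by compactness, and $\liminf_{n \to \infty} g(x_n) \geq g(x_*) \in \R$ would contradict $g(x_n) \to -\infty$. The degenerate case $g \equiv +\infty$ is trivially handled by the constant family $\{u_n \equiv n\}_{n \in \N}$. For a fully general (non-compact) metric space one would replace the constant weight $m$ by a continuous radial weight decreasing fast enough to dominate $-g$ on an exhaustion by balls, but this refinement is not required downstream in the paper.
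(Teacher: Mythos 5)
Your tent-function construction is a genuinely different route from the paper, which instead invokes Hahn's 1917 theorem (every real-valued lower semi-continuous function on a metric space is the increasing pointwise limit of real-valued continuous functions) and then handles $g$ taking the value $+\infty$ by truncating to $g_n = \min(g, n)$ and taking the double-indexed supremum. For compact $X$ your argument is correct and pleasantly self-contained — it avoids citing Hahn and makes the approximating family explicit — and this is the only case the paper actually uses from Section 3 onwards, so it is a legitimate shortcut for a reader willing to restrict the lemma's scope.

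However, the lemma as stated in the paper is for an arbitrary metric space, and your sketched repair for the non-compact case is not sound as written. The suggestion to ``replace the constant weight $m$ by a continuous radial weight decreasing fast enough to dominate $-g$ on an exhaustion by balls'' presupposes that $\inf_{B(x_0,R)} g > -\infty$ for each $R$, and a lower semi-continuous function on a non-compact metric space need not be bounded below on bounded sets. Concretely, take $X = \N$ with $d(n,m) = 1$ for $n \neq m$ (a bounded, discrete, hence non-compact metric space) and $g(n) = -n$; every ball of radius greater than $1$ is all of $X$ and $\inf_X g = -\infty$, so no radial weight $\psi(d(\cdot,x_0))$ can lie below $g$. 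The lemma still holds here (trivially, since $g$ is continuous), and indeed it holds in general: lower semi-continuity gives, around every point, a ball on which $g$ is bounded below, and one can glue these local lower bounds into a continuous global minorant $m \preccurlyeq g$ using paracompactness of metric spaces and a partition of unity, after which $\min(m, c)$ serves in place of your constant. But that is a substantially different and less elementary step than a radial weight, and without it your construction proves a strictly weaker statement than the paper's Lemma~\ref{l: lower semi-continuous functions are oplus of continuous functions}. One further cosmetic point: in your compactness argument you write $\liminf_{n\to\infty} g(x_n) \geq g(x_*)$, whereas lower semi-continuity gives this only along the convergent subsequence $\{x_{n_k}\}$; the conclusion is unaffected because $g(x_n)\to-\infty$ forces every subsequential $\liminf$ to be $-\infty$, but the inequality should be stated for the subsequence.
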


\begin{proof}
It is straightforward to check that every lower semi-continuous function $g\: X\to\R\cup\{+\infty\}$ on the compact metric space $X$ has a lower bound $M\in\R$. It then follows from \cite[Theorem~23.19]{Ke95} that there exists a non-decreasing sequence $\{f_n\}_{n\in\N}$ in $C(X,\R)$ such that $g=\oplusu{n\in\N}f_n$.
\end{proof}

Now we can give a description of $\widehat{C(X,\R)}$.
\begin{prop} \label{p: completion}
    Let $X$ be a compact metric space. Then
    \begin{equation*}
        \widehat{C(X,\R)}=\{g \: X \rightarrow \R\cup\{+\infty\}: g \text{ lower semi-continuous}\}\cup\{-\infty\}.
    \end{equation*}
\end{prop}

\begin{proof}
	Denote $W \= \{g \: X \rightarrow \R\cup\{+\infty\}: g \text{ lower semi-continuous}\}\cup\{-\infty\}$, and we need to show that $W=\widehat{C(X,\R)}$. By Definition~\ref{d:completion}, it suffices to check that $W$ is complete and each complete subset of $\Rm^{X}$ containing $C(X,\R)$ contains $W$. 

	We first prove the completeness of $W$. For each family $\{g_v\in W: v\in V\}$, $g_v$ is lower semi-continuous for all $v$ in the index set $V$. Thus, $\oplusu{v\in V}g_v$ is lower semi-continuous. Moreover, if there exists an index $v_0$ in $V$ such that $g_{v_0}$ is not constant function $-\infty$, then $g_v\: X\to\R\cup\{+\infty\}$ since $g_v\in W$. It follows that
	$\oplusu{v\in V}g_v \: X \rightarrow \R\cup\{+\infty\}$. Otherwise $g_v=-\infty$ for all $v$ in $V$ and consequently $\oplusu{v\in V}g_v=-\infty\in W$. We conclude that $\oplusu{v\in V}g_v\in W$. The completeness of $W$ is now verified.
	
	Now we show that every complete set $\tW\subseteq\Rm^{X}$ containing $C(X,\R)$ contains $W$. Note that $-\infty=\oplusu{u\in\emptyset}u\in\tW$. By Lemma~\ref{l: lower semi-continuous functions are oplus of continuous functions}, for each $g\in W\smallsetminus\{-\infty\}$, there exists a family $\{u_i\in C(X,\R): i\in I\}\subseteq\tW$ such that $g=\oplusu{i\in I}u_i$. Now $g\in\tW$ follows from the completeness of $\tW$. We conclude that $W\subseteq\tW$.
\end{proof}

Recall that a \emph{$R$-semimodule} is a set $S$ equipped with a binary operation $+\: S\times S \rightarrow  S$ and a map $\times\: R\times S \rightarrow  S$, with the operations for the ring $R$ also denoted by $+$ and $\times$, provided that the following axioms are satisfied:
\begin{enumerate}[label=\rm{(\roman*)}]	
	\smallskip
	\item $(a+ b)+ c=a+(b+ c)$ for $a,b,c$ in $S$.
	\smallskip
	\item $a+ b=b+ a$ for $a,b$ in $S$.
	\smallskip
	\item There is an element $0_S$ in $S$ such that $0_S+ a=a$ for every $a$ in $S$.
	\smallskip
	\item $\lambda\times(a+ b)=(\lambda\times a)+(\lambda\times b)$ for $\lambda\in R$ and $a,b\in S$.
	\smallskip
	\item $(\lambda_1+\lambda_2)\times a=(\lambda_1\times a)+(\lambda_2\times a)$ for $\lambda_1,\lambda_2\in R$ and $a\in S$.
	\smallskip
	\item $0_R\times a=0_S$ for every $a\in S$.
	\smallskip
	\item $1_R\times a=a$ for every $a\in S$.
	\smallskip
	\item $(\lambda_1\times\lambda_2)\times a=\lambda_1\times(\lambda_2\times a)$ for $\lambda_1,\lambda_2\in R$ and $a\in S$.
\end{enumerate}

In this article, we focus on the operation pair $(\oplus,\otimes)$. It is straightforward to check that $C(X,\R)\cup\{+\infty,\,-\infty\}$ and $\widehat{C(X,\R)}$ are $\Rm$-semimodules. 

Now we define tropical continuous linear maps using our notion of completion.

\begin{definition}\label{d:max-plus linear}
	Let $X$ be a set and $V, W\subseteq\Rm^X$ be two $\Rm$-semimodules.
	Let $\cL$ be a map from $V$ to $W$. 
\begin{enumerate}[label=\rm{(\roman*)}]	
\smallskip
\item We call $\cL$ \emph{tropical linear} if \begin{equation*}
	\cL(u\oplus v)=\cL(u)\oplus\cL(v)\quad\text{and}\quad
	\cL(\lambda\otimes u)=\lambda\otimes\cL(u)
\end{equation*}
for all $\lambda\in\Rm$ and $u,v\in V$.
\smallskip
\item We call $\cL$ \emph{tropical continuous} if $\cL$ can be extended uniquely to a tropical linear map $\cL \:\widehat{V} \rightarrow \widehat{W}$ satisfying
\begin{equation*}
	\cL\bigl(\oplusu{u\in U}u\bigr)=\oplusu{u\in U}\cL(u)
\end{equation*} 
for every subset $U\subseteq\widehat{V}$.
\smallskip
\item If $W=\Rm$ (seen as the set of constant functions in $\Rm^X$), then $\cL$ is called a \emph{tropical functional}.
\end{enumerate}
When $\cL$ satisfies more than one property above, we adopt the convention to only use one ``tropical'', e.g., a tropical linear functional.
\end{definition}

Let $V,W\subseteq\Rm^X$ be two $\Rm$-semimodules and $\cL$ be a tropical linear map from $V$ to $W$. Suppose $u,v\in V$ and $u\eqslantless v$, i.e., $u\oplus v=v$. Then $\cL(v)=\cL(u\oplus v)=\cL(u)\oplus\cL(v)$, i.e., $\cL(u)\eqslantless\cL(v)$. Thus, $\cL$ is an order-preserving map. We record this well-known fact below.

\begin{lemma}\label{l:tropical linear map preserves order}
Let $X$ be a set, $V,W\subseteq\Rm^X$ be two $\Rm$-semimodules, and $\cL$ be a tropical linear map from $V$ to $W$. Then $\cL(u)\eqslantless\cL(v)$ for all $u,v\in V$ with $u\eqslantless v$.
\end{lemma}

We discover the following interesting result which plays a role in the discussion on logarithmic-type zero-temperature limits.
\begin{prop}\label{p:compactness implies continuity}
Let $X$ be a compact metric space. Then every tropical linear functional $\cL \: C(X,\R)\cup\{+\infty\,,-\infty\} \rightarrow \Rm$ is tropical continuous.
\end{prop}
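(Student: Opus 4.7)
The plan is to construct the extension $\widehat{\cL}\:\widehat{C(X,\R)}\to\Rm$ by setting, for each $g\in\widehat{C(X,\R)}$,
\begin{equation*}
	\widehat{\cL}(g)\=\oplusu{i\in I}\cL(u_i),
\end{equation*}
where $g=\oplusu{i\in I}u_i$ is any representation with $u_i\in C(X,\R)$ provided by Lemma~\ref{l: lower semi-continuous functions are oplus of continuous functions} (with the empty family for $g=-\infty$). Everything then reduces to showing that this prescription is independent of the chosen representation, because well-definedness, tropical linearity of $\widehat{\cL}$, the sup-commuting identity, and uniqueness of the extension all follow from formal manipulation of representations.

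The whole argument hinges on the following \emph{monotonicity lemma}, which is the step where compactness of $X$ is essential: if $u\in C(X,\R)$ and $\{v_j\}_{j\in J}\subseteq C(X,\R)$ satisfy $u\preccurlyeq\oplusu{j\in J}v_j$, then $\cL(u)\leq\oplusu{j\in J}\cL(v_j)$. To prove it, fix $\varepsilon>0$; for each $x\in X$ choose $j(x)\in J$ with $v_{j(x)}(x)>u(x)-\varepsilon$, and by continuity of $v_{j(x)}-u$ find an open neighborhood $U_x\ni x$ on which $v_{j(x)}>u-2\varepsilon$. Compactness of $X$ yields a finite subcover $U_{x_1},\ldots,U_{x_n}$, so that
\begin{equation*}
	u\preccurlyeq 2\varepsilon\otimes\bigl(v_{j(x_1)}\oplus\cdots\oplus v_{j(x_n)}\bigr)
\end{equation*}
holds as an inequality in $C(X,\R)$. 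Tropical linearity of $\cL$ on finite tropical sums, combined with Lemma~\ref{l:tropical linear map preserves order}, yields
\begin{equation*}
	\cL(u)\leq 2\varepsilon+\max_{1\leq k\leq n}\cL\bigl(v_{j(x_k)}\bigr)\leq 2\varepsilon+\oplusu{j\in J}\cL(v_j),
\end{equation*}
with the conventions $r+(-\infty)=-\infty$ and $r+(+\infty)=+\infty$ for $r\in\R$. Letting $\varepsilon\to 0^+$ completes the proof of the lemma.

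The remainder is bookkeeping. Applied to two representations $g=\oplusu{i\in I}u_i=\oplusu{j\in J}v_j$, the monotonicity lemma gives $\cL(u_i)\leq\oplusu{j\in J}\cL(v_j)$ for each $i$, hence $\oplusu{i\in I}\cL(u_i)\leq\oplusu{j\in J}\cL(v_j)$, and the reverse by symmetry; so $\widehat{\cL}$ is well defined. Tropical linearity of $\widehat{\cL}$ (for scalar actions and for binary sums) follows by concatenating representations: $\lambda\otimes g=\oplusu{i}(\lambda\otimes u_i)$ and $g_1\oplus g_2$ is represented by the union of the representing families. The full sup-commuting identity $\widehat{\cL}\bigl(\oplusu{u\in U}u\bigr)=\oplusu{u\in U}\widehat{\cL}(u)$ for $U\subseteq\widehat{C(X,\R)}$ is obtained by writing each $u\in U$ as $u=\oplusu{i\in I_u}w_i^u$ with $w_i^u\in C(X,\R)$ and observing that both sides equal $\oplusu{u\in U,\,i\in I_u}\cL(w_i^u)$. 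Uniqueness is forced since any extension with this identity must send $\oplusu{i}u_i$ to $\oplusu{i}\cL(u_i)$. The sole substantive obstacle is the monotonicity lemma; without compactness, one cannot pass from a pointwise inequality $u\leq\oplusu{j\in J}v_j$ to an inequality against a \emph{finite} tropical sum (up to arbitrarily small tropical-multiplicative error), which is exactly what makes the tropical linearity of $\cL$ applicable.
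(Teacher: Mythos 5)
Your proof is correct and rests on the same core compactness argument as the paper's. The paper defines $\widehat{\cL}(g)$ canonically as $\oplusu{u\in C(X,\R),\, u\preccurlyeq g}\cL(u)$ and then proves the sup-commuting identity directly (with the compactness step appearing inline); you instead define $\widehat{\cL}$ via an arbitrary representation $g=\oplusu{i\in I}u_i$ and isolate all the work into a single monotonicity lemma, from which well-definedness, linearity, the sup-commuting identity, and uniqueness all follow by bookkeeping. This is a mild reorganization rather than a different route -- the monotonicity lemma's proof (choose $j(x)$, take a continuity neighborhood, extract a finite subcover, apply $\cL$ to the resulting finite tropical sum) is exactly the paper's key step -- but it is arguably tidier, since it names the one place where compactness enters. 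One small point you should make explicit: for $\lambda=+\infty$ the one-liner $\lambda\otimes g=\oplusu{i}(\lambda\otimes u_i)$ does not literally produce a $C(X,\R)$-valued representation, since $(+\infty)\otimes u_i\equiv+\infty\notin C(X,\R)$; as in the paper, the infinite scalars need a brief direct check (reducing to $\widehat{\cL}(+\infty)$ and $\widehat{\cL}(-\infty)$), which is routine but should not be folded into the concatenation argument.
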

\begin{proof}
We need to prove that $\cL$ can be uniquely extended to some $\widehat{\cL} \:\widehat{C(X,\R)} \rightarrow \Rm$ satisfying $\widehat{\cL}\bigl(\oplusu{u\in U}u\bigr)=\oplusu{u\in U}\widehat{\cL}(u)$ for each subset $U\subseteq\widehat{C(X,\R)}$ and $\widehat{\cL}(\lambda\otimes g)=\lambda\otimes\widehat{\cL}(g)$ for all $g\in\widehat{C(X,\R)}$ and $\lambda\in\Rm$. We first construct a tropical continuous linear extension and then verify the uniqueness.

We consider 
\begin{equation}\label{defi: construction of tropical continuous extension}
\widehat{\cL}(g) \= \oplusu{u\in C(X,\R), u\eqslantless g}\cL(u)
\end{equation}
for all $g\in\widehat{C(X,\R)}$. For $g\in C(X,\R)$, Lemma~\ref{l:tropical linear map preserves order} implies $\cL(u)\leq\cL(g)$ for all $u\in C(X,\R)$ with $u\eqslantless g$. Note $g\eqslantless g$. Thus, 
\begin{equation}\label{eq: construction of extension of L}
	\widehat{\cL}(g)=\oplusu{u\in C(X,\R),u\eqslantless g}\cL(u)=\cL(g).
\end{equation}
For $g\in\{+\infty,-\infty\}$, the identities in (\ref{eq: construction of extension of L}) hold since $+\infty$ and $-\infty$ are the maximal and minimal element of $C(X,\R)\cup\{+\infty,\,-\infty\}$, respectively.
We conclude that $\widehat{\cL}$ is an extension of $\cL$. 

Next, we check the linearity and continuity condition for $\widehat{\cL}$.
Fix $g\in\widehat{C(X,\R)}$. For $\lambda\in\R$,
\begin{equation}\label{eq:linearity of extension of L}
	\widehat{\cL}(\lambda\otimes g)=\oplusu{u\in C(X,\R), u\eqslantless\lambda\otimes g}\cL(u)=\oplusu{u\in C(X,\R), u\eqslantless g}\cL(\lambda\otimes u)=\lambda\otimes\widehat{\cL}(g).
\end{equation}
For $\lambda\in\{+\infty,\,-\infty\}$, the identities in (\ref{eq:linearity of extension of L}) obviously hold. 

To prove $\widehat{\cL}\bigl(\oplusu{u\in U}u\bigr)=\oplusu{u\in U}\widehat{\cL}(u)$, denote $g\coloneqq\oplusu{u\in U}u$ for some arbitrary $U\subseteq\widehat{C(X,\R)}$. By (\ref{defi: construction of tropical continuous extension}), $u\eqslantless g$ implies $\widehat{\cL}(u)\leq\widehat{\cL}(g)$. We conclude that $\oplusu{u\in U}\widehat{\cL}(u)\leq\widehat{\cL}(g)$. Now we need to show $\widehat{\cL}(g)\leq\oplusu{u\in U}\widehat{\cL}(u)$. By (\ref{defi: construction of tropical continuous extension}), it suffices to prove that for all $v$ in $C(X,\R)$ satisfying $v\eqslantless g$, $\cL(v)\leq\oplusu{u\in U}\widehat{\cL}(u)$.

Fix $\epsilon>0$ and $v\in C(X,\R)$ with $v\eqslantless g=\oplusu{u\in U}u$. For every $x\in X$, there exists $u_x\in U$ such that $u_x(x)>v(x)-\epsilon$. Since $u_x\in\widehat{C(X,\R)}$, by Lemma~\ref{l: lower semi-continuous functions are oplus of continuous functions} there exists $w_x\in C(X,\R)$ such that $w_x\eqslantless u_x$ and $w_x(x)>v(x)-\epsilon$. Now that $w_x$ and $v$ are both in $C(X,\R)$, $w_x>v-2\epsilon$ holds in some neighbourhood $B(x,r_x)$. Thus, $\bigcup\limits_{x\in X}B(x,r_x)$ forms an open cover of $X$ and the compactness of $X$ implies that there is a finite cover $X=\bigcup\limits_{i=1}^nB(x_i,r_{x_i})$. We conclude that $v-2\epsilon\eqslantless\oplusu{1\leq i\leq n}w_{x_i}$. 

Note that $\oplusu{1\leq i\leq n}w_{x_i}$ is in $C(X,\R)$ since $w_{x_i}$ is in $C(X,\R)$. Thus it follows from the linearity of $\cL$, $w_x\eqslantless u_x$, and $u_x\in U$ that 
\begin{equation*} 
\cL(v)-2\epsilon\leq\cL\bigl(\oplusu{1\leq i\leq n}w_{x_i}\bigr)=\oplusu{1\leq i\leq n}\cL(w_{x_i})\leq\oplusu{1\leq i\leq n}\widehat{\cL}(u_{x_i})\leq\oplusu{u\in U}\widehat{\cL}(u).
\end{equation*}
Let $\epsilon\to 0^+$ and we conclude that $\cL(v)\leq\oplusu{u\in U}\widehat{\cL}(u)$ and the continuity follows.

Finally, we verify the uniqueness of the extension. Let $\widetilde{\cL}$ be an extension of $\cL$ satisfying $\widetilde{\cL}\bigl(\mathop{\oplus}\limits_{u\in U}u\bigr)=\mathop{\oplus}\limits_{u\in U}\widetilde{\cL}(u)$ for each subset $U\subseteq\widehat{C(X,\R)}$ and $\widetilde{\cL}(\lambda\otimes g)=\lambda\otimes\widetilde{\cL}(g)$ for all $g\in\widehat{C(X,\R)}$ and $\lambda\in\Rm$. 

Fix $g\in\widehat{C(X,\R)}$, consider $U\coloneqq\{u\in C(X,\R):u\eqslantless g\}$. By Lemma~\ref{l: lower semi-continuous functions are oplus of continuous functions}, we have $\oplusu{u\in U}u=g$. Since $\widetilde{\cL}$ is a tropical continuous linear extension of $\cL$, we see that $\widetilde{\cL}(g)=\oplusu{u\in U}\widetilde{\cL}(u)=\oplusu{u\in C(X,\R), u\eqslantless g}\cL(u)=\widehat{\cL}(g)$. Uniqueness is now verified.
\end{proof}
\begin{rem}
This proposition suggests that there is no difference between tropical linear functionals and tropical continuous linear functionals for compact metric spaces. Furthermore, we observe from the above proof that we can replace ``every subset $U\subseteq\widehat{V}$'' with ``every countable subset $U\subseteq\widehat{V}$'' in Definition~\ref{d:max-plus linear}~(ii) for compact metric spaces.
\end{rem}

\begin{definition}\label{d:dual space}
	Let $X$ be a set and $V\subseteq\Rm^X$ be a $\Rm$-semimodule. The \emph{tropical dual space} $V^{\ostar}$ of $V$ is defined to be the space consisting of all tropical continuous linear functionals from $\widehat{V}$ to $\Rm$.
\end{definition}

In the remaining of this subsection, we discuss the connection between the completion and the tropical dual space for $C(X,\R)$. The following definitions are important for the representation of the tropical dual space, as well as some of the analyses in Section~\ref{s: zero-temperature limits}. Similar notions also appear in \cite{CGQ04} and \cite{LMS01}.
\begin{definition}\label{d:odiv}
    Let $X$ be a set. For $u,v\in\Rm^{X}$, we define 
	\begin{align*}
	u \odiv v&\=\sup\bigl\{\lambda\in\Rm:\lambda\otimes v\eqslantless u\bigr\}\in\Rm\quad\text{and}\\
	u\oddiv v&\=u\otimes(-v)\in\Rm^X.
	\end{align*}
\end{definition}
Recall $\sup\emptyset=-\infty$.

\begin{definition}\label{d: tropical linear functional represented by -v}
For a compact metric space $X$, we denote \begin{equation*}l_v(f) \= -(v\odiv f)=\sup_{x\in X}\{f(x)-v(x)\}
\end{equation*} for all $v\in\widehat{C(X,\R)}$ and all $f\in\widehat{C(X,\R)}$. 
\end{definition}

It immediately follows that $l_v$ is a tropical continuous linear functional, i.e., $l_v\in C(X,\R )^{\ostar}$.

Due to the subtlety of our notion of completion in Definition~\ref{d:completion}, the following bijective relation can be verified.
\begin{prop}\label{p:dual space equals completion}
    Let $X$ be a compact metric space.
	The functional $l_v$ defined in Definition~\ref{d: tropical linear functional represented by -v} is a tropical continuous linear functional and the map $l_{\bullet}\:\widehat{C(X,\R)} \rightarrow  C(X,\R)^{\ostar}$ given by $v\mapsto l_v$ is a bijection.
\end{prop}
\begin{proof}
    It is straightforward to check that $l_v$ is a tropical continuous linear functional for all $v\in\widehat{C(X,\R)}$.
    
	Now we show that $l_{\bullet}$ is surjective and then verify its injectivity.
	
	Let $\cL \:\widehat{C(X,\R)} \rightarrow \Rm$ be a tropical continuous linear functional. First we consider the case where the range of $\cL$ does not contain any real number, i.e., $\cL(f)\in\{+\infty,\,-\infty\}$, for all $f$ in $\widehat{C(X,\R)}$. Note again that we adopt $-\infty-(-\infty)=-\infty$ and $+\infty-(+\infty)=-\infty$ following $-\infty\otimes+\infty=-\infty$. If there exists $u\in C(X,\R)$ such that $\cL(u)=-\infty$, then $\cL(+\infty)=\cL(+\infty\otimes u)=+\infty\otimes \cL(u)=+\infty\otimes-\infty=-\infty$. Note that $+\infty$ is the maximal element of $\widehat{C(X,\R)}$, it follows that $\cL(f)=-\infty$ for all $f\in\widehat{C(X,\R)}$. 
	Otherwise, $\cL(u)=+\infty$ for all $u$ in $C(X,\R)$. We conclude that the only two tropical linear functionals for the first case are 
	\begin{equation*}
	\cL(f)=-\infty \text{ for all }f\in\widehat{C(X,\R)} 
	\quad
	\text{ and }\quad \cL(f)=\begin{cases}
		-\infty&\text{if}\, f\equiv-\infty,\\
		+\infty&\text{if}\, f\not\equiv-\infty.\\
	\end{cases}
	\end{equation*}
By Definition~\ref{d: tropical linear functional represented by -v}, the former is $l_{+\infty}$ and the latter is $l_{-\infty}$.
	
	Now we suppose that $\cL$ takes some real value, then the range of $\cL$ must contain $\R$ since $\cL$ is tropical linear. Consider $M\coloneqq\bigl\{f\in\widehat{C(X,\R)}:\cL(f)\leq 0\bigr\}$ and it follows that $\cL(M)\coloneqq\{\cL(f):f\in M\}$ also contains some real number. Since $\widehat{C(X,\R)}$ is complete, we consider $v\coloneqq\oplusu{f\in M}f\in\widehat{C(X,\R)}$. 
	
	\smallskip
	\emph{Claim.} $\cL(v)=0$ and $\cL=l_v$. 
	\smallskip
	
	It follows from the tropical continuity of $\cL$ and the definition of $M$ that $\cL(v)=\oplusu{f\in M}\cL(f)\leq 0$. Recall that $\cL(M)\cap\R\neq\emptyset$ and consequently $\cL(v)\in\R$. The tropical linearity of $\cL$ then implies \begin{equation*}
	\cL((-\cL(v))\otimes v)=(-\cL(v))\otimes \cL(v)=0,
	\end{equation*} and consequently $(-\cL(v))\otimes v\in M$. Recall that $v=\oplusu{f\in M}f$ is the maximal element of $M$, it follows that $(-\cL(v))\otimes v\eqslantless v$. Thus, \begin{equation*}
	0=(-\cL(v))\otimes \cL(v)=\cL((-\cL(v))\otimes v)\leq \cL(v)
\end{equation*} and we conclude that $\cL(v)=0$.

	Similarly, for each $f\in\widehat{C(X,\R)}$, $\cL((-\cL(f))\otimes f)=(-\cL(f))\otimes \cL(f)\leq 0$ implies that  $(-\cL(f))\otimes f\in M$. Since $v$ is the maximal element of M, we have $(-\cL(f))\otimes f\leq v$, i.e., $-\cL(f)\leq v\odiv f$ (recall Definition~\ref{d:odiv}). 
	Meanwhile, it follows from Definition~\ref{d:odiv} ($(v\odiv f)\otimes f\eqslantless v$) and Lemma~\ref{l:tropical linear map preserves order} that 
	\begin{equation*}
		v\odiv f+\cL(f)=\cL((v\odiv f)\otimes f)\leq \cL(v)=0,
	\end{equation*}i.e., $v\odiv f\leq-\cL(f)$. Combining the two inequalities, we see that $\cL=l_v$.
\smallskip

 We conclude that $l_\bullet$ is surjective.
	
	Finally, we need to verify the injectivity of $l_\bullet$, i.e., for each pair of $u,v$ in $\widehat{C(X,\R)}$, if \begin{equation*}\sup_{x\in X}\{f(x)-v(x)\}=\sup_{x\in X}\{f(x)-u(x)\} 
	\end{equation*} for all $f$ in $\widehat{C(X,\R)}$, then $v=u$. Now take $f=u$ in the condition, and we get $u\eqslantless v$. Take $f=v$ and we get $v\eqslantless u$. We conclude that $u=v$ and injectivity follows.
\end{proof}

The following basic fact is used in the proofs of Proposition~\ref{p:uniqueness of tropical eigenvalue} and Theorem~\ref{t:representation}.

\begin{prop}\label{p:continuity of tropical linear functionals}
Let $X$ be a compact metric space. Then every tropical linear functional $\cL \: C(X,\R)\cup\{+\infty,\,-\infty\} \rightarrow \Rm$ is continuous on $C(X,\R)$ with respect to the uniform norm $\norm{\cdot}_{C^0}$. Moreover, let $\widehat{\cL}$ be the tropical continuous extension of $\cL$. If $\widehat{\cL}\neq l_{-\infty}$ and $\widehat{\cL}\neq l_{+\infty}$, then the range of $\cL$ is contained in $\R$ and for all $u.v\in C(X,\R)$,
\begin{equation*}
\abs{\cL(u)-\cL(v)}\leq\norm{u-v}_{C^0}.
\end{equation*}
\end{prop}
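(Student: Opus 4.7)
The plan is to combine Propositions~\ref{p:compactness implies continuity} and~\ref{p:dual space equals completion} to obtain an explicit representation of $\cL$, and then verify continuity by a short case analysis on the representing density.

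First I would invoke Proposition~\ref{p:compactness implies continuity} to extend $\cL$ uniquely to a tropical continuous linear functional $\widehat{\cL}$ on $\widehat{C(X,\R)}$, and then apply Proposition~\ref{p:dual space equals completion} to obtain a unique $v\in\widehat{C(X,\R)}$ with $\widehat{\cL}=l_v$, so that $\cL(u)=\sup_{x\in X}\{u(x)-v(x)\}$ for every $u\in C(X,\R)$.

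Next, I would dispose of the two exceptional cases. If $v\equiv+\infty$, direct computation gives $\cL(u)=-\infty$ for every $u\in C(X,\R)$, and if $v\equiv-\infty$, the $\Rm$-arithmetic conventions give $\cL(u)=+\infty$ for every $u\in C(X,\R)$. In both cases $\cL$ is a constant functional, hence trivially continuous in the $C^0$ topology.

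In the remaining case where $\widehat{\cL}\neq l_{\pm\infty}$, Proposition~\ref{p: completion} tells us that $v$ is lower semi-continuous with values in $\R\cup\{+\infty\}$ and not identically $+\infty$. By compactness of $X$ and lower semi-continuity, $v$ attains its infimum, which is some $m\in\R$ (finite because $v$ takes a finite value at some $x_0\in X$). Since any $u\in C(X,\R)$ is bounded, say $\abs{u}\leq M$, I would conclude
\begin{equation*}
	u(x_0)-v(x_0)\leq\cL(u)\leq M-m,
\end{equation*}
which places $\cL(u)$ in $\R$. For the Lipschitz estimate, for $u,w\in C(X,\R)$ and any $x\in X$ the pointwise inequality $u(x)-v(x)\leq w(x)-v(x)+\norm{u-w}_{C^0}$ (valid under the $\Rm$-conventions even when $v(x)=+\infty$, since both sides are $-\infty$ in that case) passes to the supremum over $x\in X$ and gives $\cL(u)\leq\cL(w)+\norm{u-w}_{C^0}$. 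By symmetry $\abs{\cL(u)-\cL(w)}\leq\norm{u-w}_{C^0}$, so $\cL$ is $1$-Lipschitz and in particular continuous in the $C^0$ topology.

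There is no serious technical obstacle here; the main care lies in correctly handling the $\pm\infty$ arithmetic conventions on $\Rm$ and in observing that the finiteness of $\cL(u)$ in the generic case follows from the combination of the lower bound for $v$ coming from compactness with the existence of at least one point at which $v$ is finite.
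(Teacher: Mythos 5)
Your proof is correct and follows essentially the same route as the paper: extend $\cL$ via Proposition~\ref{p:compactness implies continuity}, represent $\widehat{\cL}=l_v$ via Proposition~\ref{p:dual space equals completion}, dispose of the two constant cases, and in the generic case use lower semi-continuity plus compactness to bound $v$ from below and deduce finiteness and the $1$-Lipschitz estimate. The only cosmetic difference is in the Lipschitz step: you argue directly from the explicit representation $\sup_x\{u(x)-v(x)\}$, whereas the paper derives the same bound abstractly from $u_1\preccurlyeq u_2\otimes\norm{u_1-u_2}_{C^0}$ together with Lemma~\ref{l:tropical linear map preserves order}; both are fine.
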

\begin{proof}
Fix a tropical linear functional $\cL \: C(X,\R)\cup\{+\infty,\,-\infty\} \rightarrow \Rm$. If $\widehat{\cL}=l_{-\infty}$ (resp.\ $l_{+\infty}$), then $\cL$ is the constant $+\infty$ (resp.\ $-\infty$) functional. So, $\cL$ is continuous. 

If $\widehat{\cL}\neq l_{-\infty}$ and $\widehat{\cL}\neq l_{+\infty}$, then by Proposition~\ref{p:dual space equals completion}, there exsits $v\in\widehat{C(X,\R)}\smallsetminus\{+\infty,\,-\infty\}$ such that
$\widehat{\cL}=l_v$. Thus, $\cL(f)=\sup\limits_{x\in X}\{f(x)-v(x)\}$ for all $f\in C(X,\R)$.

By Proposition~\ref{p: completion}, $v\in\widehat{C(X,\R)}\smallsetminus\{+\infty,\,-\infty\}$ implies that $v \: X \rightarrow \R\cup\{+\infty\}$ is lower semi-continuous and there exists $x_0\in X$ so that $v(x_0)\in\R$. The lower semi-continuity of $v$ and the compactness of $X$ imply that there exists $C\in\R$ so that $v(x)\geq C$ for all $x\in X$. Thus,
\begin{align*}
f(x_0)-v(x_0)\leq\sup_{x\in X}\{f(x)-v(x)\}\leq \norm{f}_{C^0}-C
\end{align*} for all $f\in C(X,\R)$. It follows that $\cL(f)\in\R$ for all $f\in C(X,\R)$.

Moreover, fix $u_1,u_2\in C(X,\R)$. Since $u_1\eqslantless u_2\otimes\norm{u_1-u_2}_{C^0}$ and $u_2\eqslantless u_1\otimes\norm{u_2-u_1}_{C^0}$, it follows from Lemma~\ref{l:tropical linear map preserves order} and the tropical linearity of $\cL$ that 
\begin{equation*}
\cL(u_1)\leq\norm{u_1-u_2}_{C^0}\otimes\cL(u_2)\quad\text{and}\quad\cL(u_2)\leq\norm{u_2-u_1}_{C^0}\otimes\cL(u_1).
\end{equation*}
Recall that $\cL(f)\in\R$ for all $f\in C(X,\R)$. We conclude that $\abs{\cL(u_1)-\cL(u_2)}\leq\norm{u_1-u_2}_{C^0}$ and consequently $\cL$ is continuous. 
\end{proof}

\subsection{Tropical  measures}
This subsection is devoted to the connection between abstract tropical measures and tropical linear functionals: the function $-v$ appearing in the representation of tropical linear functionals should be the density $b$ of the corresponding tropical measures. 

We recall the following definitions from \cite[Definition~18]{ACG94}.
\begin{definition}\label{d:tropical measure}
Let $\mathcal{U}$ be the collection of all open subsets of a topological space $X$. A map $m \:\mathcal{U} \rightarrow \Rm$ is a \emph{tropical measure} if it satisfies the following conditions:
\begin{enumerate}[label=\rm{(\roman*)}]	
\smallskip
\item $m(\emptyset)=-\infty$.
\smallskip
\item $m\bigl(\bigcup_{i\in I}A_i\bigr)=\oplusu{i\in I}m(A_i)$, if $I$ is a countable index set and $A_i\in\mathcal{U}$ for each $i\in I$.
\end{enumerate}
If $m(X)<+\infty$, $m$ is said to be \emph{finite}. If $m(X)=0$, $m$ is called a \emph{tropical probability measure}.
\end{definition} 

\begin{remark}
In \cite[Definition 18]{ACG94}, tropical probability measures are called \emph{cost measures}. 
\end{remark}

\begin{definition}\label{d:tropical density}
Let $\mathcal{U}$ be the collection of all open subsets of a topological space $X$. If $b \: X \rightarrow  \Rm$ and $m\:\cU \rightarrow \Rm$ satisfy
\begin{equation*}
m(U)=\mathop{\oplus}\limits_{u\in U}b(u)
\end{equation*}
for every open subset $U$ of $X$, then we call $b$ a \emph{density} of the tropical measure $m$.
\end{definition}

It is straightforward to check that the map $m$ defined above is a tropical measure.

For a tropical measure $m$ with a density $b$, we define the \emph{tropical integral} with respect to $m$ by 
\begin{equation*}
\int_{V}^{\oplus}\! f(x)\,\mathrm{d}m\=\oplusu{x\in V}(f(x)\otimes b(x))
\end{equation*}for each open subset $V$ of $X$ and each $f\in C(X,\R)$. For the well-definedness of the tropical integral, see Remark~\ref{r:density and functional} below.

A function $b\: X \rightarrow \Rm$ is a \emph{density} of a tropical linear functional $l$ on $C(X,\R)$ if 
\begin{equation*}
	l(f)=\oplusu{x\in X}(f(x)\otimes b(x))
\end{equation*}
for all $f\in C(X,\R)$.

\begin{prop}\label{p: measure has a density}
For each finite tropical measure $m$ on a compact metric space  $X$, there exists a unique upper semi-continuous function $b \: X \rightarrow \R\cup\{-\infty\}$ such that $b$ is a density of $m$.
\end{prop}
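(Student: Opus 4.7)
The plan is to construct the density $b$ explicitly as a local infimum of $m$, then verify upper semi-continuity and the density property, and finally derive uniqueness from a representation valid for any upper semi-continuous density.

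\emph{Construction.} I would define
\begin{equation*}
b(x) \= \inf\{m(U) : U \text{ open in } X,\ x\in U\}.
\end{equation*}
Since $X$ is metric, this equals $\lim_{r\to 0^+} m(B(x,r))$ (the limit is monotone in $r$). Because $m$ is finite we have $b(x)\le m(X)<+\infty$, so $b\:X\to\R\cup\{-\infty\}$.

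\emph{Upper semi-continuity.} Fix $c\in\R$ and suppose $b(x_0)<c$. By definition of $b$ there is an open set $U\ni x_0$ with $m(U)<c$. For every $y\in U$, $U$ is an open neighborhood of $y$, so $b(y)\le m(U)<c$. Hence $U\subseteq\{b<c\}$ and $b$ is upper semi-continuous.

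\emph{Density property.} Let $U\subseteq X$ be open. For every $x\in U$ the set $U$ itself is an open neighborhood of $x$, so $b(x)\le m(U)$; taking $\oplus$ over $x\in U$ yields $\oplusu{x\in U}b(x)\le m(U)$. For the reverse inequality, fix $\epsilon>0$ and for each $x\in U$ choose an open $V_x\subseteq U$ with $x\in V_x$ and $m(V_x)\le\max\{b(x)+\epsilon,\,-1/\epsilon\}$. Since $X$ is a compact metric space it is second countable, hence Lindelöf, so the cover $\{V_x\}_{x\in U}$ of $U$ admits a countable subcover $\{V_{x_n}\}_{n\in\mathbb{N}}$. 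Using countable additivity of $m$,
\begin{equation*}
m(U)=m\Bigl(\bigcup_{n\in\mathbb{N}} V_{x_n}\Bigr)=\oplusu{n\in\mathbb{N}}m(V_{x_n})\le\max\Bigl\{\oplusu{x\in U}b(x)+\epsilon,\,-1/\epsilon\Bigr\}.
\end{equation*}
Letting $\epsilon\to 0^+$ gives $m(U)\le\oplusu{x\in U}b(x)$, establishing equality.

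\emph{Uniqueness.} Suppose $b_1,b_2\:X\to\R\cup\{-\infty\}$ are two upper semi-continuous densities for $m$. For any $x_0\in X$ and $r>0$, the density property applied to the open ball $B(x_0,r)$ gives
\begin{equation*}
\sup_{y\in B(x_0,r)} b_i(y)=\oplusu{y\in B(x_0,r)} b_i(y)=m(B(x_0,r)),\qquad i=1,2.
\end{equation*}
By upper semi-continuity, $b_i(x_0)=\inf_{r>0}\sup_{y\in B(x_0,r)} b_i(y)$ (one direction is immediate since $x_0\in B(x_0,r)$, and the other follows from the standard characterization of upper semi-continuous functions). Hence $b_1(x_0)=\lim_{r\to 0^+} m(B(x_0,r))=b_2(x_0)$, so $b_1=b_2$.

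The main subtlety is in the lower bound for the density property: one must pass from a pointwise refinement to a countable subcover in order to invoke countable additivity of $m$. This step is exactly where compactness (through second countability and the Lindelöf property) is used; without it the constructed $b$ need not recover $m$ on every open set.
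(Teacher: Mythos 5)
Your proof is correct and follows essentially the same route as the paper's: define $b(x)$ as the germ of $m$ at $x$ (your infimum over open neighborhoods equals the paper's $\lim_{\epsilon\to 0^+} m(B(x,\epsilon))$), verify upper semi-continuity, get the upper bound trivially and the lower bound via second countability / Lindelöf and countable suprema, then argue uniqueness by recovering $b$ from $m$ on small balls. The only visible difference is cosmetic: your $\max\{b(x)+\epsilon,\,-1/\epsilon\}$ device handles the points where $b(x)=-\infty$ a bit more explicitly than the paper's ``$b(u)\otimes\epsilon\ge m(B(u,r_u))$'' (which, read literally, asks for $m(B(u,r_u))=-\infty$ when $b(u)=-\infty$), and your uniqueness step is spelled out where the paper leaves it as ``straightforward to check.''
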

\begin{remark}
Here we present a direct proof in the case where $X$ is compact, which is the only case we need. For more general discussions, see \cite[Theorem~19]{ACG94}, \cite[Proposition~3.15]{Ak99}, and \cite[Corollary~3.22]{Ak99}.
\end{remark}
\begin{proof}
We first verify the existence. By the definition of tropical measures, $m(A\cup B)=m(A)\oplus m(B)$. Thus, $A\subseteq B$ implies $m(A)\leq m(B)$. Then for every $x\in X$, we define $b(x)$ as
\begin{equation} \label{defi: density from measure}
b(x)\coloneqq\lim_{\epsilon\to0^+}m(B(x,\epsilon)).
\end{equation} 
It suffices to show that $b(\cdot)$ is upper semi-continuous and \begin{equation*}
	m(U)=\oplusu{u\in U}b(u)\end{equation*} for every open subset $U$ of $X$.

Consider a sequence $\{x_k\}$ in $X$ converging to $x$ in $X$ as $k\to+\infty$. Note that 
\begin{equation*}b(x_k)\leq m(B(x_k,d(x,x_k)))\leq m(B(x,2d(x,x_k))).
\end{equation*} 
Combining $x_k\to x$ as $k\to +\infty$ and (\ref{defi: density from measure}), we see that \begin{equation*}\limsup_{k\to+\infty}b(x_k)\leq b(x).
\end{equation*} 
This proves the upper semi-continuity of $b$.

Now suppose that $U$ is open. It follows from (\ref{defi: density from measure}) that $b(u)\leq m(U)$ for every $u$ in $U$. Thus, \begin{equation*}
	m(U)\geq\oplusu{u\in U}b(u).
\end{equation*} To prove the inverse inequality, we need to use the fact that a compact metric space $X$ is a second-countable topological space so that for every open cover of $U$, there exists a countable subcover of $U$. Fix $\epsilon>0$. By (\ref{defi: density from measure}), there exists a neighbourhood $B(u,r_u)$ such that $b(u)\otimes\epsilon\geq m(B(u,r_u))$ for every $u$ in $U$. Since $\{B(u,r_u)\}_{u\in U}$ forms an open cover of $U$, we then have a countable subcover $\{B(u_k,r_{u_k})\}_{k\in\N}$. It follows from Definition~\ref{d:tropical measure} that
\begin{equation*}
m(U)\leq\oplusu{k\in\N}m(B(u_k,r_{u_k}))\leq\Bigl(\oplusu{u\in U}b(u)\Bigr)\otimes\epsilon.
\end{equation*} As $\epsilon$ tends to $0$ from above, we get \begin{equation*}
m(U)\leq\oplusu{u\in U}b(u).
\end{equation*} We conclude that $b$ is a density of $m$ and the finiteness of $m$ implies that $b \: X \rightarrow \R\cup\{-\infty\}$.

Finally, it is straightforward to check that if $b \: X \rightarrow \R\cup\{-\infty\}$ is an upper semi-continuous density of the measure $m$, then $b(x)$ must be equal to the limit of $m(B(x,\epsilon))$ as $\epsilon$ tends to 0 from above. The uniqueness follows.
\end{proof}

\begin{rem}\label{r:density and functional}
Let $X$ be a compact metric space. Recall $D_{\max}(X) =\{b \: X \rightarrow \R\cup\{-\infty\} : b \text{ is upper semi-continuous}\}\cup\{+\infty\}$. 
By Proposition~\ref{p: measure has a density}, $D_{\max}(X)\smallsetminus\{+\infty\}$ consists of upper semi-continuous densities of finite tropical measures. Moreover, $v\in\widehat{C(X,\R)}$ is equivalent to $-v\in D_{\max}(X)$. Thus, by Proposition~\ref{p:dual space equals completion}, $D_{\max}(X)$ consists of densities of tropical continuous linear functionals and $b\mapsto l_{-b}$ gives a bijection from $D_{\max}(X)$ to $C(X,\R)^{\ostar}$. Recall the notion of tropical integral. We conclude that Proposition~\ref{p: measure has a density} together with Proposition~\ref{p:dual space equals completion} forms a tropical analog of the Riesz representation theorem for $C(X,\R)$ when $X$ is a compact metric space.

The existence of the density provides convenience for studying tropical measures and tropical linear functionals. But one needs to be aware that a tropical measure (resp.~linear functional) can have different densities that may not be upper semi-continuous. So in the sequel, when we say two densities $b_1\: X \rightarrow \Rm$ and $b_2\: X \rightarrow \Rm$ are \emph{equivalent}, we mean that they induce the same tropical linear functional, i.e., $\oplusu{x\in X}(f(x)\otimes b_1(x))=\oplusu{x\in X}(f(x)\otimes b_2(x))$ for all $f\in C(X,\R)$. 

Note that there is a slight difference between $\oplusu{x\in X}(f(x)\otimes b_1(x))=\oplusu{x\in X}(f(x)\otimes b_2(x))$ for all $f\in C(X,\R)$ and $\oplusu{u\in U}{b_1(u)}=\oplusu{u\in U}b_2(u)$ for every open subset $U$ of $X$. If $\oplusu{x\in X}b_1(x)<+\infty$ and $\oplusu{x\in X}b_2(x)<+\infty$, then the two conditions are equivalent. 

Generally, one can prove that $\oplusu{u\in U}{b_1(u)}=\oplusu{u\in U}b_2(u)$ for every open subset $U$ of $X$ implies $\oplusu{x\in X}(f(x)\otimes b_1(x))=\oplusu{x\in X}(f(x)\otimes b_2(x))$ for all $f\in C(X,\R)$ and consequently for all $f\in\widehat{C(X,\R)}$. Fix an open subset $V$ of $X$ and a function $h\in C(X,\R)$. Note that the function $h_V$ which coincides with $h$ at points in $V$ and takes $-\infty$ at points in $X\smallsetminus V$ is lower semi-continuous. We conclude that $h_V\in\widehat{C(X,\R)}$ and consequently $\oplusu{x\in X}(h_V(x)\otimes b_1(x))=\oplusu{x\in X}(h_V(x)\otimes b_2(x))$, i.e., $\oplusu{x\in V}(h(x)\otimes b_1(x))=\oplusu{x\in V}(h(x)\otimes b_2(x))$ if $\oplusu{u\in U}b_1(u)=\oplusu{u\in U}b_2(u)$ for every open subset $U$ of $X$. This justifies our definition of the tropical integral in Definition~\ref{d:tropical density}.

For every $b_1\: X \rightarrow \Rm$ with $\oplusu{x\in X}b_1(x)=+\infty$, $b_1$ and the constant function $+\infty$ induce the same tropical linear functional, but they may not induce the same tropical measure.
\end{rem} 

For future reference, we can also define the counterparts of invariant measures and ergodic measures. We will not need these notions in the current article. Consider a continuous map $T \: X \rightarrow  X$ on a compact metric space $X$ so that density functions are available.

\begin{definition}
Let $T \: X \rightarrow  X$ be a continuous map on a compact metric space $X$. Let $m$ be a finite tropical measure on $X$ with the upper semi-continuous density $b \: X \rightarrow \R\cup\{-\infty\}$. 
\begin{enumerate}[label=\rm{(\roman*)}]	
	\smallskip
	\item $m$ is \emph{$T$-invariant} if for every point $x$ in $X$, $b(x)=\oplusu{y\in T^{-1}(x)}b(y)$.
	\smallskip
	\item $m$ is \emph{ergodic} if $m$ is \emph{$T$-invariant} and for every point $x$ in $X$, \begin{equation*}\lim_{k\to+\infty}b\bigr(T^k(x)\bigr)=\begin{cases}
		\oplusu{y\in X} b(y) & \text{ if } b(x)\in\R, \\
		-\infty & \text{ if } b(x)=-\infty.\\
	\end{cases}\end{equation*}
\end{enumerate}
\end{definition}

\section{Bousch operator and its tropical dual}\label{s: Bousch operator}
In parallel to the classical  Ruelle operator theory, we investigate in this section the tropical eigenfunctions of $\cL_A$ and tropical eigen-densities of $\cL_A^{\ostar}$ for H\"{o}lder continuous $A$.

In Subsection~\ref{ss:analysis}, we present analysis similar to that in thermodynamic formalism and establishes constructive results for tropical eigenfunctions of $\cL_A$ and tropical eigen-densities of $\cL_A^{\ostar}$. Properties of the Aubry set and the Ma\~{n}\'{e} potential and the representations of tropical eigenfunctions of $\cL_A$ are recalled in Subsection~\ref{ss:Mane_potential}. In Subsection~\ref{ss: proof of D}, we establish the representations of tropical eigen-densities of $\cL_A^{\ostar}$ and prove Theorem~\ref{t:representation}. In Subsection~\ref{ss: sufficient condition for uniqueness}, we discuss a sufficient condition for the uniqueness of tropical eigenfunctions and tropical eigen-densities, leading to a proof of Theorem~\ref{t:existence and generic uniqueness} in Subsection~\ref{ss: proof of C}.

To be consistent with the discussion above, we adopt the notation of the tropical algebra. 

\subsection{Analysis in a tropical thermodynamic approach}   \label{ss:analysis}

This subsection constitutes a tropical counterpart to some aspects of thermodynamic formalism.  

The construction of tropical eigenfunctions of $\cL_A$ associated with eigenvalue $\energy$ is presented in Proposition~\ref{p:main_construction} and Corollary~\ref{c:more_of_constructions}. We generalize Proposition~\ref{p:main_construction} to establish Corollary~\ref{c:more_of_constructions} as a tropical counterpart of convergence theorems for the Ruelle operators despite the potential non-uniqueness of tropical eigenfunctions. As noted earlier, the construction in this corollary proves instrumental for obtaining general representations of tropical eigen-densities. Since the map $T$ is not assumed to be transitive, our construction only yields functions in $C(X,\R\cup\{-\infty\})$ and $\cL_A$ may have no tropical eigenfunctions (required to be in $C(X,\R)$) associated with eigenvalue $\energy$. In comparison, we present a direct proof of the existence of a sub-action (known as the Ma\~{n}\'{e} Lemma) in Proposition~\ref{p: mane lemma} without assuming transitivity or surjectivity, which is a main ingredient of the proof of Proposition~\ref{p:existence of eigenmeasure}. We then establish the existence of a tropical eigen-density associated with eigenvalue $\energy$ in Proposition~\ref{p:existence of eigenmeasure} and conclude the subsection with a discussion on the uniqueness of tropical eigenvalue in Proposition~\ref{p:uniqueness of tropical eigenvalue}. 

We remark that although specific propositions and proofs may differ from those in thermodynamic formalism, the underlying reasoning closely parallels the framework of thermodynamic formalism.

Recall tha $\aholder$ denotes the space of $\alpha$-H\"{o}lder continuous functions $\varphi \: X \rightarrow \R$ for $\alpha\in(0,1]$. For each $\varphi\in \aholder$ and each $\epsilon \in (0,+\infty]$, denote 
\begin{align}
		\abs{\varphi}_{d^\alpha,\epsilon}   \= \sup \biggl\{\frac{\abs{\varphi(x)-\varphi(y)} }{d(x,y)^\alpha} : x,y\in X, \,  0<d(x,y)<\epsilon \biggr\} 
  \quad \text{ and } \quad
  \abs{\varphi}_{d^\alpha}   \= \abs{\varphi}_{d^\alpha,+\infty}  .  \label{e:Lip_seminorm_local}
\end{align}
Recall that $\mathbbold{0}_X$ is used to represent the constant zero function on X. For a potential $A$ in $C(X,\R)$, denote
\begin{equation*} 
	S_nA(x)\= A(x)+A(T(x))+\cdots+A\bigl(T^{n-1}(x)\bigr)
\end{equation*}
for all $n\in\N$ and $x\in X$. We adopt the convention that $S_0A\=\mathbbold{0}_X$.

To formulate the main results, we first review distance-expanding maps (see Section~\ref{sct_Introduction}) and some distortion properties in Lemmas~\ref{l:expanding systems} and \ref{l:distortion}. Lemma~\ref{l:expanding systems} is well known (see e.g., \cite[Lemmas~4.1.2 and 4.1.4]{PU10}) and we omit its proof. A proof of Lemma~\ref{l:distortion} can be found in Appendix~\ref{ss: Appendix A}.

\begin{lemma}\label{l:expanding systems}
	Let $T\:X \rightarrow  X$ \assum. Let $\lambda>1$ and $\eta>0$ denote the constants in the distance-expanding property of $T$. Then there exists a constant $\xi>0$ such that for each $x$ in $X$, $B(T(x),\xi)\subseteq T(B(x,\eta))$. Moreover, the restriction $T|_{B(x,\eta)}$ is injective and the inverse map $T_x^{-1} \: B(T(x),\xi) \rightarrow  B(x,\eta)$ has the property that $d \bigl( T_x^{-1}(y),T_x^{-1}(z) \bigr)\leq\lambda^{-1}d(y,z)$. Futhermore, \begin{equation*}
 \sup\bigl\{\card{T^{-1}(x)}:x\in X\bigr\}\eqqcolon N<+\infty.
	\end{equation*} 
\end{lemma}
\begin{rem}\label{r: defi of n times pull back}
For each $n\in\N$, we denote $T_x^{-n}\: B(T^n(x),\xi) \rightarrow  B(x,\lambda^{-n}\xi)$ as the composition of inverse maps $T_{T^i(x)}^{-1}$ for $0\leq i\leq n-1$. 
\end{rem}

\begin{lemma}  \label{l:distortion}
	Let $T\:X \rightarrow  X$ \assum~, and \assumpo. Let $\xi>0$ be the constant in Lemma~\ref{l:expanding systems}. For all $x,y\in X$ and $n\in\N$ satisfying $d(x,y)<\xi$, $x_0\in T^{-n}(x)$, and $y_0=T_{x_0}^{-n}(y)$, we have
	\begin{equation}\label{ineq:difference of the nearby trajectory sum}
\abs{S_nA(x_0)-S_nA(y_0)}<\abs{A}_{d^\alpha}d(x,y)^\alpha\lambda^{-\alpha}(1-\lambda^{-\alpha})^{-1}.
	\end{equation} 
Moreover, if $T$ is transitive, then the following statements are true:
\begin{enumerate}[label=\rm{(\roman*)}]	
\smallskip
\item There exists a constant $C_1(A)>0$ such that for all $n\in\N$ and $x,y\in X$,
	\begin{equation*}
		\AbsBig{\oplusu{\ox\in T^{-n}(x)}S_nA(\ox)-\oplusu{\oy\in T^{-n}(y)}S_nA(\oy)}\leq C_1(A).
	\end{equation*}

\item There exists a constant $C_2>0$ depending only on $T\: X \rightarrow  X$ and $\alpha$ such that for all $A,u\in \aholder$, and $n\in\N$,
	\begin{equation*}
	\Absbig{\cL_{A}^n(u)}_{d^\alpha}\leq C_2(\abs{A}_{d^\alpha}+\abs{u}_{d^\alpha}).
	\end{equation*}
\end{enumerate}
\end{lemma}

Now we formulate the main results Proposition~\ref{p:main_construction} to Proposition~\ref{p:uniqueness of tropical eigenvalue} of this subsection, whose proofs are presented after some technical preparations in Proposition~\ref{p:L_continuous} to Lemma~\ref{l: uniform upper bound without transi and surje} are provided.

\begin{prop}[Construction of a tropical eigenfunction]  \label{p:main_construction}	
Let $T\:X \rightarrow  X$ \assum~ and \assumpo. Let $\xi>0$ be the constant from Lemma~\ref{l:expanding systems}. Consider $\oA \=  A-\energy$ and $\cL_{\oA}$.
Define 
\begin{equation*}
		v_{\mathbbold{0}_X}(x)\coloneqq\limsup_{n\to+\infty}\cL_{\oA}^n(\mathbbold{0}_X)(x)
\end{equation*}
for all $x$ in $X$. Then $v_{\mathbbold{0}_X}$ satisfies the following properties:
\begin{enumerate}[label=\rm{(\roman*)}]	
		\smallskip
		\item There exists $D>0$ such that $v_{\mathbbold{0}_X}\eqslantless D$. Moreover, if $T$ is transitive, then $v_{\mathbbold{0}_X}\in C(X,\R)$ with $\norm{v_{\mathbbold{0}_X}}_{C^0}\leq C_2\abs{A}_{d^\alpha}(\diam X)^\alpha$, where $C_2>0$ is the constant from Lemma~\ref{l:distortion}~(ii).
		\smallskip
		\item For all $x,y\in X$ satisfying $d(x,y)<\xi$, we have 
        \begin{equation*}
        v_{\mathbbold{0}_X}(x)\leq v_{\mathbbold{0}_X}(y)+\abs{A}_{d^\alpha}d(x,y)^\alpha\lambda^{-\alpha}(1-\lambda^{-\alpha})^{-1}.
        \end{equation*} 
        Moreover, if $T$ is transitive, then $v_{\mathbbold{0}_X}\in \aholder$ with $\abs{v_{\mathbbold{0}_X}}_{d^\alpha}\leq C_2\abs{A}_{d^\alpha}$, and $v_{\mathbbold{0}_X}$ is the uniform limit of 
		$\sup_{k\geq n}\cL_{\oA}^k(\mathbbold{0}_X)$ as $n\to+\infty$.
		\smallskip
		\item $v_{\mathbbold{0}_X}\in C(X,\R\cup\{-\infty\})$ and $\cL_{\oA}\bigl( v_{\mathbbold{0}_X} \bigr)=v_{\mathbbold{0}_X}$.
\end{enumerate}
\end{prop}

Corollary~\ref{c:more_of_constructions} generalizes Proposition~\ref{p:main_construction} and will play a key role in the proof of Theorem~\ref{t:representation}.

\begin{cor}\label{c:more_of_constructions}
Let $T\:X \rightarrow  X$ \assum~and \assumpo. For each $u\in C(X,\R)$, denote 
\begin{equation*}
	v_u(x) \= \limsup_{n\to+\infty}\cL_{\oA}^n(u)(x),
\end{equation*} 
for all $x$ in $X$. Let $\xi>0$ be the constant from Lemma~\ref{l:expanding systems} and $D\in\R$ be the constant from Proposition~\ref{p:main_construction}. For all $u\in\aholder$, $v_u$ satisfies the following properties:
\begin{enumerate}[label=\rm{(\roman*)}]	
	\smallskip
	\item $v_u\eqslantless D+\norm{u}_{C^0}$. Moreover, if $T$ is transitive, then $v_u\in C(X,\R)$ with 
    \begin{equation*}
	    \norm{v_u}_{C^0}\leq C_2\abs{A}_{d^\alpha}(\diam X)^\alpha+\norm{u}_{C^0},
	\end{equation*}where $C_2>0$ is the constant from Lemma~\ref{l:distortion}~(ii). 
	\smallskip
	\item For all $x,y\in X$ satisfying $d(x,y)<\xi$, we have 
        \begin{equation*}
        v_{u}(x)\leq v_{u}(y)+\abs{A}_{d^\alpha}d(x,y)^\alpha\lambda^{-\alpha}(1-\lambda^{-\alpha})^{-1}.
        \end{equation*}
        Moreover, if $T$ is transitive, then $v_u\in \aholder$ with $\abs{v_u}_{d^\alpha}\leq C_2(\abs{A}_{d^\alpha}+\abs{u}_{d^\alpha})$, and $v_u$ is the uniform limit of 
	$\sup_{k\geq n}\cL_{\oA}^k(u)$ as $n\to+\infty$.
	\smallskip
	\item $v_u\in C(X,\R\cup\{-\infty\})$ and $\cL_{\oA}(v_u)=v_u$.
\end{enumerate}
\end{cor}

An important ingredient for the proof of Proposition~\ref{p:existence of eigenmeasure} below is our following version of the Ma\'{n}\~{e} lemma (Proposition~\ref{p: mane lemma}) without the transitivity or surjectivity assumptions. Despite the potential non-existence of tropical eigenfunctions, the Ma\'{n}\~{e} lemma guarantees the existence of a sub-action with a semi-norm bound and serves as a fundamental tool for many other aspects of ergodic optimization. To overcome the lack of transitivity and surjectivity, we need the nontrivial estimates established in Lemma~\ref{l: uniform upper bound without transi and surje}, after which the proof of Proposition~\ref{p: mane lemma} will be presented.

\begin{prop}[Ma\'{n}\~e Lemma]\label{p: mane lemma}
Let $T\: X\to X$ \assum. Let $\lambda>1$ and $\eta>0$ be the constants in the distance-expanding property of $T$, $\xi>0$ be the constant from Lemma~\ref{l:expanding systems}, and $\alpha\in (0,1)$. Then there exists $L>0$ such that for all $A\in\aholder$, there exists a sub-action $v\in\aholder$ for $A$ with $\abs{v}_{d^\alpha,\xi}\leq\abs{A}_{d^\alpha}\lambda^{-\alpha}(1-\lambda^{-\alpha})^{-1}$ and $\abs{v}_{d^\alpha}\leq L\abs{A}_{d^\alpha}$.
\end{prop}

\begin{prop}\label{p:existence of eigenmeasure}
	Let $T\:X \rightarrow  X$ \assum~and \assumpo. There exists a density $b\in D_{\max}(X)\smallsetminus\{+\infty,\,-\infty\}$ such that \begin{equation*}
		\cL_{A}^{\ostar}(b)=\energy\otimes b.
		\end{equation*}
\end{prop}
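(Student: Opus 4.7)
The plan is to construct the eigen-density as a monotone decreasing pointwise infimum of iterates of $\cL_{\oA}^{\ostar}$ on a well-chosen sub-solution, implementing a tropical version of Perron's method in $D_{\max}(X)$ (equivalently, via $b\mapsto-b$, in the sup-complete space $\widehat{C(X,\R)}$). First I would invoke Proposition~\ref{p:main_construction} to obtain a calibrated sub-action $u\in\Lip(X,d^\alpha)$ with $\cL_{\oA}(u)=u$. Since $y\in T^{-1}(T(y))$ is one of the finitely many preimages entering the supremum defining $\cL_{\oA}(u)(T(y))$, this identity forces the pointwise inequality $u(T(y))\geq u(y)+\oA(y)$ for every $y\in X$, which will serve as the sub-solution inequality in the negated picture.

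Set $b_0\=-u\in C(X,\R)\subseteq D_{\max}(X)\setminus\{+\infty,-\infty\}$ and iterate $b_n\=(\cL_{\oA}^{\ostar})^n(b_0)$ for $n\in\N$. Directly from Definition~\ref{d: definition of the two operator} one obtains the explicit formula $b_n(x)=-u(T^n(x))+S_n\oA(x)$, so each $b_n$ is continuous and bounded above by $\norm{u}_{C^0}$. The displayed inequality for $u$ combined with the monotonicity of $\cL_{\oA}^{\ostar}$ (immediate from its pointwise definition) gives $b_1\preccurlyeq b_0$, and inductively $b_{n+1}\preccurlyeq b_n$. I would then define $b^{\ast}\=\inf_{n\in\N_0}b_n$; since a pointwise infimum of upper semi-continuous functions is upper semi-continuous, $b^{\ast}\in D_{\max}(X)$, and $b^{\ast}\preccurlyeq b_0$ rules out $b^{\ast}=+\infty$. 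The fixed-point identity
\begin{equation*}
\cL_{\oA}^{\ostar}(b^{\ast})(x)=b^{\ast}(T(x))+\oA(x)=\inf_n(b_n(T(x))+\oA(x))=\inf_n b_{n+1}(x)=b^{\ast}(x)
\end{equation*}
is then immediate, and is equivalent to $\cL_A^{\ostar}(b^{\ast})=\energy\otimes b^{\ast}$.

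The main obstacle will be ruling out $b^{\ast}\equiv-\infty$, since along a non-maximizing orbit $S_n\oA(x)$ can a priori tend to $-\infty$. I would resolve this by integrating against any maximizing measure $\mu\in M_{\max}(T,A)$, which is non-empty by weak-$\ast$ compactness of $M(X,T)$. Invariance of $\mu$ together with $\int\oA\,\mathrm{d}\mu=0$ yield $\int b_n\,\mathrm{d}\mu=-\int u\,\mathrm{d}\mu$ for every $n\in\N_0$; since $b_1\leq\norm{u}_{C^0}$ is integrable from above, the monotone convergence theorem for decreasing sequences gives $\int b^{\ast}\,\mathrm{d}\mu=-\int u\,\mathrm{d}\mu\in\R$. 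Hence $b^{\ast}$ is $\mu$-integrable and in particular $b^{\ast}>-\infty$ on a set of positive $\mu$-measure, placing $b^{\ast}$ in $D_{\max}(X)\setminus\{+\infty,-\infty\}$ as required.
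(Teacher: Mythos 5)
Your proof is correct, and it follows a genuinely different route from the paper's. The paper also starts from a calibrated sub-action $v_{\mathbbold{0}_X}$ (so $\varphi\coloneqq v_{\mathbbold{0}_X}+\oA-v_{\mathbbold{0}_X}\circ T\preccurlyeq\mathbbold{0}_X$) and also brings in a maximizing measure $\mu$, but it then runs a supremum-type Perron argument in the sup-complete space $\widehat{C(X,\R)}$: it considers the set $S$ of sub-solutions $w$ with $w+\varphi\preccurlyeq w\circ T$, $w\geq\mathbbold{0}_X$, and the normalization $w|_{\supp(\mu)}=\mathbbold{0}_X$, takes $w_0\coloneqq\oplusu{w\in S}w$, shows $w_0\in S$ (so the normalization rules out $w_0\equiv+\infty$), and then shows $w_0\circ T-\varphi$ lies back in $S$ to conclude $w_0$ is a fixed point. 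Your argument instead iterates $\cL_{\oA}^{\ostar}$ downward from $b_0=-u$, exploits the explicit closed form $b_n(x)=-u(T^n(x))+S_n\oA(x)$, takes the decreasing infimum $b^\ast$, and rules out $b^\ast\equiv-\infty$ by integrating against $\mu$ and applying monotone convergence. Both arguments use $\mu$ for the key non-degeneracy step, but in dual ways: the paper's normalization on $\supp(\mu)$ caps the supremum from above, whereas your integral identity (which in fact forces $b^\ast=-u$ $\mu$-a.e.) floors the infimum from below. Your approach is more explicit and computational, and automatically yields $b^\ast\neq+\infty$ since $b^\ast\preccurlyeq b_0$; the paper's is more structural, sitting squarely in the idempotent-analysis framework of completeness that the paper emphasizes. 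Both are valid; they produce potentially different eigen-densities when uniqueness fails.
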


We discuss the uniqueness of tropical eigenvalues in Proposition~\ref{p:uniqueness of tropical eigenvalue}. Due to the lack of transitivity, $\cL_A^{\ostar}$ may have more than one eigenvalue. 
\begin{prop}[Uniqueness of tropical eigenvalues]
	\label{p:uniqueness of tropical eigenvalue}
	Let $T\:X \rightarrow  X$ \assum~and \assumpo. Then the following statements are true:
	\begin{enumerate}[label=\rm{(\roman*)}]	
		\smallskip
		\item If there exists $u\in C(X,\R)$ and $\lambda\in\Rm$ such that $\cL_A(u)=\lambda\otimes u$, then $\lambda=\energy$.
		\smallskip
		\item If there exists a density $b\in D_{\max}(X)\smallsetminus\{+\infty,\,-\infty\}$ and $\lambda\in\Rm$ such that $\cL_A^{\ostar}(b)=\lambda\otimes b$, then $\lambda\leq\energy$. Moreover, if $T$ is transitive, then $\lambda=\energy$.
	\end{enumerate}
\end{prop}

Before providing the proofs of Propositions~\ref{p:main_construction}--\ref{p:uniqueness of tropical eigenvalue}, we prepare the following technical facts. The proof of Proposition~\ref{p:L_continuous}, which could be familiar to experts, can be found in Appendix~\ref{ss: Appendix A}.
\begin{prop}  \label{p:L_continuous}
	Let $T\:X \rightarrow  X$ \assum. The operator $\cL_{A}$ for potential $A\in C(X,\R)$ maps $C(X,\R\cup\{-\infty\})$ into itself. Moreover, if $T$ is surjective, then the operator $\cL_A$ with $A\in C(X,\R)$ (resp.~$\aholder$) maps $C(X,\R)$ (resp.~$\aholder$) into itself.
\end{prop}

\begin{lemma}\label{l:projective Hilbert metric estimate}
	Let $T\:X \rightarrow  X$ \assum~, and $A \in \aholder$ with $\alpha \in (0,1]$. Then the Bousch operator $\cL_A \: C(X,\R\cup\{-\infty\})\to C(X,\R\cup\{-\infty\})$ satisfies 
        \begin{equation}
		\cL_A(u_2)\leq\norm{u_1-u_2}_{C^0}\otimes\cL_A(u_1)
	\end{equation}
    for all $u_1,u_2$ in $C(X,\R)$. Moreover, if $T$ is transitive, then for all $u_1,u_2\in C(X,\R)$,
    \begin{equation}
        \norm{\cL_A(u_1)-\cL_A(u_2)}_{C^0}\leq\norm{u_1-u_2}_{C^0}.
    \end{equation}
\end{lemma}

\begin{proof}
	Note that $u_2\eqslantless u_1\otimes\norm{u_2-u_1}_{C^0}$ for all $u_1,u_2\in C(X,\R)$. It is straightforward to check that $\cL_A$ is tropical linear (see Definition~\ref{d:max-plus linear}).
	It then follows from Lemma~\ref{l:tropical linear map preserves order} that 
	$\cL_A(u_2)\eqslantless\norm{u_1-u_2}_{C^0}\otimes\cL_A(u_1)$ for all $u_1,u_2\in C(X,\R)$. Moreover, if $T$ is transitive, then $\cL_A(u)\in C(X,\R)$ for all $u\in C(X,\R)$. Thus, it follows that $\norm{\cL_A(u_1)-\cL_A(u_2)}_{C^0}\leq\norm{u_1-u_2}_{C^0}$.
\end{proof}

\begin{lemma}\label{l: uniform upper bound without transi and surje}Let $T\: X\to X$ \assum~and \assumpo. Denote $\oA\=A-\energy$. Then there exists $D>0$ such that for all $x\in X$ and $n\in\N$, we have
$S_n\oA(x)\leq D$.
\end{lemma}
\begin{proof}
     Let $\lambda>1$ and $\eta>0$ be constants in the distance-expanding property of $T$, and $\xi$ be the constant in Lemma~\ref{l:expanding systems}. Denote $\beta\=\min\{\eta,\xi\}$ and $\gamma\=\min\{(\lambda-1)\beta,\xi\}$. Since $X$ is compact, let $N_{\gamma}$ be the maximal cardinality of an $\gamma$-separated subset of $X$. 

    Now fix an arbitrary $x\in X$ and $n>N_\gamma$. It follows that there exists $0\leq i<j\leq n-1$ such that $d\bigl(T^i(x),T^j(x)\bigr)<\gamma$. Consider 
    \begin{align*}
    i_0&\=\min\bigl\{i \in \N_0 : 0\leq i\leq n-1,\,\text{there exists }i<j\leq n-1\text{ such that }d\bigl(T^i(x),T^j(x)\bigr)<\gamma\bigr\}, \\
    j_0&\=\max\bigl\{j \in \N_0 : i_0<j\leq n-1,\,d\bigl(T^{i_0}(x),T^j(x)\bigr)<\gamma\bigr\}.
    \end{align*}
    Now we can recurrently define $i_{k+1},j_{k+1}$ when $i_k,j_k$ have been defined by the following procedure. If there exists $j_k<i<j\leq n-1$ such that $d\bigl(T^i(x),T^j(x)\bigr)<\gamma$, then consider
    \begin{align*}
        i_{k+1}&\=\min\bigl\{i \in \N_0 : j_k<i\leq n-1,\,\text{there exists }i<j\leq n-1\text{ such that }d\bigl(T^i(x),T^j(x)\bigr)<\gamma\bigr\}, \\
        j_{k+1}&\=\max\bigl\{j \in \N_0 : i_{k+1}<j\leq n-1,d\bigl(T^{i_{k+1}}(x),T^j(x)\bigr)<\gamma\bigr\}.
    \end{align*}
    We stop defining if there does not exist $j_k<i<j\leq n-1$ such that $d\bigl(T^i(x),T^j(x)\bigr)<\gamma$. 
    
    We get a series of indexes $0\leq i_0<j_0<i_1<j_1<\cdots<i_k<j_k\leq n-1$ with $k\geq 0$. Denote $I_1\=\{i_l:0\leq l\leq k\}$ and $I_2\=\{m:0\leq m\leq n-1\}\smallsetminus\bigcup_{l=0}^{k}[i_l,j_l]$. Notice that it follows from the definition of these indices that $\bigl\{T^i(x):i\in I_1\bigr\}$ and $\bigl\{T^i(x):i\in I_2\bigr\}$ are both $\gamma$-separated subsets of $X$. Thus we see that $k+1=\# I_1\leq N_\gamma$, $\# I_2\leq N_\gamma$, and
    \begin{equation*}
    S_n\oA(x)\leq\sum_{i\in I_2}\oA\bigl(T^i(x)\bigr)+\sum_{l=0}^{k} S_{j_l-i_l+1}\oA\bigl(T^{i_l}(x)\bigr)\leq 2N_\gamma\norm{\oA}_{C^0}+\sum_{l=0}^{k} S_{j_l-i_l}\oA\bigl(T^{i_l}(x)\bigr).
    \end{equation*}

    It follows from \cite[Corollary~4.2.4 and 4.2.5]{PU10} that there exists a periodic point $w_l\in X$ with period $j_l-i_l$ such that $d\bigl(T^{p+i_l}(x),T^p(w)\bigr)\leq\beta\leq\xi$ for all $0\leq l\leq k$ and $0\leq p\leq j_l-i_l$. Thus it follows from (\ref{ineq:difference of the nearby trajectory sum}) that
    \begin{equation*}
        S_{j_l-i_l}\oA\bigl(T^{i_l}(x)\bigr)< S_{j_l-i_l}\oA(w)+\abs{A}_{d^\alpha}\beta^\alpha\lambda^{-\alpha}(1-\lambda^{-\alpha})^{-1}
    \end{equation*}
    for all $0\leq l\leq k$.
    Since $w_l$ is a periodic point with period $j_l-i_l$, it follows from the definition of $\energy$ that $S_{j_l-i_l}A(w)\leq(j_l-i_l)\energy$ for all $0\leq l\leq k$. We conclude that
    \begin{equation}\label{ineq: uniform upper bound for Sn without transi}
        S_n\oA(x)\leq 2N_\gamma\norm{\oA}_{C^0}+N_\gamma\abs{A}_{d^\alpha}\xi^\alpha\lambda^{-\alpha}(1-\lambda^{-\alpha})^{-1}.
    \end{equation}Denote $D\=2N_\gamma\norm{\oA}_{C^0}+N_\gamma\abs{A}_{d^\alpha}\xi^\alpha\lambda^{-\alpha}(1-\lambda^{-\alpha})^{-1}$. Now it is straightforward to check that $S_n\oA(x)\leq D$ for all $x\in X$ and $n\in\N$.
\end{proof}

Now we are ready to prove Proposition~\ref{p:main_construction} and Corollary~\ref{c:more_of_constructions}.
The proof of the part with the transitivity assumption of Proposition~\ref{p:main_construction} simplifies the approach in \cite[Proposition~6.4]{LZ25}.
\begin{proof}[Proof of Proposition~\ref{p:main_construction}]
    \smallskip
    (i). Let $D$ be the constant in Lemma~\ref{l: uniform upper bound without transi and surje}. It directly follows from Lemma~\ref{l: uniform upper bound without transi and surje} that $v_{\mathbbold{0}_X}\eqslantless D$. We denote $D_1\=C_2\abs{A}_{d^\alpha}(\diam X)^\alpha$ in this proof for convenience, where $C_2$ is the constant in Lemma~\ref{l:distortion}~(ii). Denote $a_n\=\sup_{x\in X}\cL_{A}^n(\mathbbold{0}_X)(x)$ for all $n\in\N$. It is straightforward to check that 
    $a_n=\sup_{x\in X}S_nA(x)$. Thus, by \cite[Proposition~2.1]{Je06}, 
    \begin{equation*}
    	\energy=\limsup_{n\to+\infty}\frac{a_n}{n}.
    \end{equation*}
    Observe that $\{a_n\}_{n\in\N}$ is subadditive. It follows that $\energy=\lim_{n\to+\infty}\frac{a_n}{n}=\inf_{n\in\N}\frac{a_n}{n}$. If $T$ is transitive, then it follows from Lemma~\ref{l:distortion}~(ii) that $\Absbig{\cL_{A}^n(\mathbbold{0}_X)}_{d^\alpha}\leq C_2\norm{A}_{d^\alpha}$ for all $n\in\N$. Thus, for all $n\in\N$,
    \begin{equation}\label{ineq: variation implies lower bound for L_A^n}
    	(a_n-D)\otimes\mathbbold{0}_X\eqslantless\cL_{A}^n(\mathbbold{0}_X).
    \end{equation}Then it follows from the tropical linearity of $\cL_A$ and Lemma~\ref{l:tropical linear map preserves order} that for all $n,m\in\N$,
    \begin{equation*}
    	(a_n-D)\otimes\cL_{A}^m(\mathbbold{0}_X)=\cL_{A}^m((a_n-D)\otimes\mathbbold{0}_X)\eqslantless\cL_{A}^{n+m}(\mathbbold{0}_X).
    \end{equation*}It follows that $(a_n-D)\otimes a_m\leq a_{n+m}$ for all $n,m\in\N$, i.e., $\{D-a_n\}_{n\in\N}$ is subadditive. Thus, 
    \begin{equation*}
    	\inf_{n\in\N}n^{-1}(D-a_n)=\lim_{n\to+\infty}n^{-1}(D-a_n)=-\energy.
    \end{equation*}We conclude that $0\leq a_n-n\energy\leq D$ for all $n\in\N$. Thus by (\ref{ineq: variation implies lower bound for L_A^n}), we have for all $n\in\N$,
    \begin{equation}\label{ineq: uniform bound for L_A^n}
    	\Normbig{\cL_{\oA}^n(\mathbbold{0}_X)}_{C^0}\leq D.
    \end{equation} Now (i) is easily verified.
    
    \smallskip
    (ii). The local estimate is a direct consequence of (\ref{ineq:difference of the nearby trajectory sum}). If $T$ is transitive, it follows from Lemma~\ref{l:distortion} that $\Absbig{\cL_{\oA}^n(\mathbbold{0}_X)}_{d^\alpha}\leq C_2\norm{A}_{d^\alpha}$ for all $n\in\N$. Thus, $\bigl\{\cL_{\oA}^n(\mathbbold{0}_X)\bigr\}_{n\in\N}$ is an equicontinuous family. It follows from (\ref{ineq: uniform bound for L_A^n}) that $\bigl\{\cL_{\oA}^n(\mathbbold{0}_X)\bigr\}_{n\in\N}$ is uniformly bounded.
    In conclusion, 
    \begin{equation*}
    	\bigl\{\cL_{\oA}^n(\mathbbold{0}_X)\bigr\}_{n\in\N}\quad\text{ and }\quad\Bigl\{\sup_{k\geq n}\cL_{\oA}^k(\mathbbold{0}_X)\Bigr\}_{n\in\N}
    \end{equation*} are equicontinuous and uniformly bounded. Thus, $v_{\mathbbold{0}_X}$, as the pointwise decreasing limit of $\sup_{k\geq n}\cL_{\oA}^k(\mathbbold{0}_X)$ as $n\to +\infty$, is the uniform limit (limit in the $C^0$ topology) of $\sup_{k\geq n}\cL_{\oA}^k(\mathbbold{0}_X)$ by the Arzel\`{a}--Ascoli theorem. Now it follows clearly that $\abs{v_{\mathbbold{0}_X}}_{d^\alpha}\leq C_2\abs{A}_{d^\alpha}$ and (ii) is verified.

    \smallskip
    (iii). It follows from (i) and (ii) that $v_{\mathbbold{0}_X}\in C(X,\R\cup\{-\infty\})$. Now we show that $\cL_{\oA}(v_{\mathbbold{0}_X})=v_{\mathbbold{0}_X}$. Note that the proof of \cite[Proposition~6.1~(iv)]{LZ25} can be easily adapted to convergent sequence $\{u_i\}_{i\in\N}$ of functions in $(\R\cup\{-\infty\})^X$. It then follows that 
    \begin{equation*}
        \cL_{\oA}(v_{\mathbbold{0}_X})=\lim_{n\to+\infty}\cL_{\oA}\bigl(\sup_{k\geq n}\cL_{\oA}^k(\mathbbold{0}_X)\bigr)=\lim_{n\to+\infty}\sup_{k\geq n}\cL_{\oA}^{k+1}(\mathbbold{0}_X)=v_{\mathbbold{0}_X}.
    \end{equation*}The proof is now complete.
\end{proof}

\begin{proof}[Proof of Corollary~\ref{c:more_of_constructions}]
(i). By Lemma~\ref{l:projective Hilbert metric estimate}, we have $\Normbig{\cL_{\oA}^{n+1}(u)-\cL_{\oA}^{n+1}(\mathbbold{0}_X)}_{C^0}\leq\Normbig{\cL_{\oA}^n(u)-\cL_{\oA}^n(\mathbbold{0}_X)}_{C^0}$ for every $n\in\N_0$. Thus, $\Normbig{\cL_{\oA}^n(u)-\cL_{\oA}^n(\mathbbold{0}_X)}_{C^0}\leq\norm{u-\mathbbold{0}_X}_{C^0}$ for every $n\in\N$. Now (i) follows from Proposition~\ref{p:main_construction}~(i).

\smallskip
(ii). The local estimate is a direct consequence of (\ref{ineq:difference of the nearby trajectory sum}) and that $u\in\aholder$. Moreover, if $T$ is transitive, it follows from Lemma~\ref{l:distortion} that $\Absbig{\cL_{\oA}^n(u)}_{d^\alpha}\leq C_2(\abs{A}_{d^\alpha}+\abs{u}_{d^\alpha})$ for all $n\in\N$ and thus $\Absbig{\sup\limits_{k\geq n}\cL_{\oA}^k(u)}_{d^\alpha}\leq C_2(\abs{A}_{d^\alpha}+\abs{u}_{d^\alpha})$. Since $v_u$ is the pointwise decreasing limit of $\sup\limits_{k\geq n}\cL_{\oA}^k(u)$ as $n\to+\infty$, a uniform lower bound of $\bigl\{\sup\limits_{k\geq n}\cL_{\oA}^k(u)\bigr\}_{n\in\N}$ follows from the lower bound of $v_u$ proved in (i). A uniform upper bound now follows from the above uniform Lipschitz constant estimate and the compactness of $X$. We conclude that $\bigl\{\sup\limits_{k\geq n}\cL_{\oA}^k(u)\bigr\}_{n\in\N}$ forms a normal family since it is equicontinuous and uniformly bounded. Thus, $v_u$ is its uniform limit as $n\to+\infty$ and it follows that $\abs{v_u}_{d^\alpha}\leq C_2(\abs{A}_{d^\alpha}+\abs{u}_{d^\alpha})$.

\smallskip
(iii). It follows from (i) and (ii) that $v_u\in C(X,\R\cup\{-\infty\})$. That $\cL_{\oA}(v_u)=v_u$ follows from a similar argument as that of Proposition~\ref{p:main_construction}~(iii).
\end{proof}

We discuss the existence of tropical eigenfunctions of $\cL_A$ in the following remark.
\begin{remark}
Observe that $v_{u_1} \eqslantless v_{u_2} \otimes\abs{u_1-u_2}_{C^0}$ holds for all $u_1, u_2 \in C(X,\R )$. Consequently, $v_{u}^{-1}(-\infty) = v_{\mathbbold{0}_X}^{-1}(-\infty)$ for every $u \in C(X,\R )$. When transitivity is not assumed, examples with $v_{\mathbbold{0}_X}^{-1}(-\infty) \neq \emptyset$ are readily constructible. This indicates that the preceding constructions may fail to yield tropical eigenfunctions (required to lie in $C(X,\R )$), though they provide their counterparts in $C(X,\R \cup\{-\infty\})$. Notably, the analogous phenomenon in thermodynamic formalism arises for eigenfunctions of the Ruelle operator $\mathcal{R}_{A}$ associated with eigenvalue $\exp(P(A))$: without transitivity, these eigenfunctions may lack strict positivity and vanish at certain points. This correspondence reflects the algebraic similarity where $-\infty$ in the tropical algebra over $\Rm$ corresponds to $0$ in linear algebra over $\R$.
\end{remark}

Now we present a direct proof of Proposition~\ref{p: mane lemma} without assuming transitivity or surjectivity. While adopting the identical construction from \cite[Proposition~11]{CLT01} (where $C^1$ expanding maps on the circle are considered), our argument crucially relies on the nontrivial estimates established in the proof of Lemma~\ref{l: uniform upper bound without transi and surje}, which are esstential for handling the enhanced generality of our map $T$.
\begin{proof}[Proof of Proposition~\ref{p: mane lemma}]
    Consider $v(x)\=\sup\bigl\{\sup_{n\geq 1}\sup_{y\in T^{-n}(x)}S_n\oA(y),\,0\bigr\}$ for all $x\in X$. Let $D$ be the constant in the proof of Lemma~\ref{l: uniform upper bound without transi and surje}. It follows that $0\leq v(x)\leq D$ for all $x\in X$. Thus it follows from (\ref{ineq:difference of the nearby trajectory sum}) that 
    \begin{equation*}
 \abs{v}_{d^\alpha,\xi}\leq\abs{A}_{d^\alpha}\lambda^{-\alpha}(1-\lambda^{-\alpha})^{-1}.
    \end{equation*}Since $X$ is compact, we conclude that $v\in\aholder$. Recall that $D=2N_\gamma\norm{\oA}_{C^0}+N_\gamma\abs{A}_{d^\alpha}\xi^\alpha\lambda^{-\alpha}(1-\lambda^{-\alpha})^{-1}$, where $\gamma=\min\{(\lambda-1)\eta,(\lambda-1)\xi,\xi\}$. We see that
    \begin{equation*}
    \begin{aligned}
        \abs{v}_{d^\alpha}
        &\leq \max\Bigl\{\abs{v}_{d^\alpha,\xi},\,\xi^{-\alpha}\sup_{x,y\in X}\abs{v(x)-v(y)}\Bigr\}\\
        &\leq \max\{\abs{v}_{d^\alpha,\xi},\,\xi^{-\alpha}\cdot D\}\\
        &\leq \max\bigl\{\abs{A}_{d^\alpha}\lambda^{-\alpha}(1-\lambda^{-\alpha})^{-1},\,\xi^{-\alpha} \bigl(N_\gamma\abs{A}_{d^\alpha} \xi^\alpha\lambda^{-\alpha}(1-\lambda^{-\alpha})^{-1} + 2N_{\gamma}\Normbig{\oA}_{C^0}\bigr)\bigr\}\\
        &=N_\gamma\lambda^{-\alpha}(1-\lambda^{-\alpha})^{-1}\abs{A}_{d^\alpha}+2\xi^{-\alpha}N_\gamma\Normbig{\oA}_{C^0}.
    \end{aligned}
    \end{equation*} Notice that $Q\bigl(T,\oA\bigr)=\energy-\energy=0$ together with the compactness of $X$ implies that $\min_{x\in X}\oA(x)\leq 0$ and $\max_{x\in X}\oA(x)\geq 0$. Since $\max_{x\in X}\oA(x)-\min_{x\in X}\oA(x)\leq\abs{A}_{d^\alpha}(\diam X)^\alpha$, we see that 
    \begin{equation*}
        -\abs{A}_{d^\alpha}(\diam X)^\alpha\leq \min_{x\in X}\oA(x)\leq \max_{x\in X}\oA(x)\leq \abs{A}_{d^\alpha}(\diam X)^\alpha.
    \end{equation*}We conclude that $\Normbig{\oA}_{C^0}\leq\abs{A}_{d^\alpha}(\diam X)^\alpha$ and 
    \begin{equation*}
        \abs{v}_{d^\alpha}\leq \abs{A}_{d^\alpha}\bigl(N_\gamma\lambda^{-\alpha}(1-\lambda^{-\alpha})^{-1}+2\xi^{-\alpha}(\diam X)^\alpha N_{\gamma}\bigr).
    \end{equation*}Denote $L\=N_{\gamma}\lambda^{-\alpha}(1-\lambda^{-\alpha})^{-1}+2\xi^{-\alpha}(\diam X)^\alpha N_\gamma$.

    Now it suffices to show that $v$ is a sub-action, i.e.,
    $v(x)+\oA(x)\leq v(T(x))$ for all $x\in X$. Notice that for all $n\geq 1$ and $y\in T^{-n}(x)\subseteq T^{-n-1}(T(x))$, $S_n\oA(y)+\oA(x)=S_{n+1}\oA(y)$. It follows that 
    \begin{equation*}
        \begin{aligned}
            v(x)+\oA(x)
            &=\sup\Bigl\{\sup_{n\geq 1}\sup_{y\in T^{-n}(x)}S_{n+1}\oA(y),\,\oA(x)\Bigr\}\\
            &\leq\sup\Bigl\{\sup_{m\geq 2}\sup_{y\in T^{-m}(T(x))}S_m\oA(y),\,\sup_{y\in T^{-1}(T(x))}\oA(y)\Bigr\}\\
            &=\sup_{m\geq 1}\sup_{y\in T^{-m}(T(x))}S_m\oA(y)\leq v(T(x))
        \end{aligned}
    \end{equation*}for all $x\in X$. The proof is now complete.
\end{proof}

Before constructing a tropical eigen-density of $\cL_A^{\ostar}$, we discuss the reason for the definition of $\cL_{A}^{\ostar}$ in (\ref{eq:definition of tropical adjoint}) in the following remark.

\begin{rem}\label{r:reason for defi of the dual operator}
Due to Remark~\ref{r:density and functional}, we define the tropical adjoint operators on the space $D_{\max}(X)$. It is straightforward to check that $b\in D_{\max}(X)$ implies $\mathcal{L}_A^{\ostar}(b)\in D_{\max}(X)$. 
	
Recall from Proposition~\ref{p:dual space equals completion} that $b\mapsto l_{-b}$ gives a bijection from $D_{\max}(X)$ to $C(X,\R)^{\ostar}$. We denote $c\=\cL_A^{\ostar}(b)$. Then
\begin{align*}
l_{-c}(u)
&=\oplusu{x\in X}\bigl(u(x)+\cL_{A}^{\ostar}(b)(x)\bigr)
=\oplusu{x\in X}(u(x)+b(T(x))+A(x))\\
&=\oplusu{x\in X}\Bigl(\oplusu{y\in T^{-1}(x)}(u(y)+A(y))+b(x)\Bigr)
=\oplusu{x\in X}(\cL_{A}(u)(x)+b(x))
=l_{-b}(\cL_A(u))
\end{align*}
for all $u\in C(X,\R)$. By identifying $b$ with $l_{-b}$, $\cL_A^{\ostar}$ can be seen as a map from $C(X,\R)^{\ostar}$ to $C(X,\R)^{\ostar}$, i.e., $\cL_{A}^{\ostar}(l_{-b})=l_{-c}$. Now the above identities imply that
\begin{equation*}
\cL_{A}^{\ostar}(l)(u)=l(\cL_{A}(u))
\end{equation*}for all $l\in C(X,\R)^{\ostar}$ and $u\in C(X,\R)$. To avoid confusion on notations, we will not use this identification in this article.

For $b\in D_{\max}(X)\smallsetminus\{+\infty\}$, let $m_b$ be the finite tropical measure satisfying $m_b(U)=\oplusu{x\in U}b(x)$ for every open subset $U$ of $X$. Recall Definition~\ref{d:tropical density} and $c=\cL_{A}^{\ostar}(b)$. Then for every open subset $U$ of $X$,
\begin{align*}
m_c(U)
&=\oplusu{x\in U}\cL_{A}^{\ostar}(b)(x)=\oplusu{x\in U}(b(T(x))+A(x))\\
&=\oplusu{x\in T(U)}\Bigl(b(x)+\oplusu{y\in T^{-1}(x)}A(y)\Bigr)
=\int_{T(U)}^{\oplus}\oplusu{y\in T^{-1}(x)}A(y)\,\mathrm{d}m_b
\end{align*}

By identifying $b$ with $m_b$, $\cL_{A}^{\ostar}$ can be seen as a map on the set of finite tropical measures, i.e.,
\begin{equation*}
\cL_{A}^{\ostar}(m)(U)=\int_{T(U)}^{\oplus}\oplusu{y\in T^{-1}(x)}A(y)\,\mathrm{d}m
\end{equation*}for every open subset $U$ of $X$. Note that the above identity can serve as the defining identity for the tropical adjoint operator on the space of tropical measures.
\end{rem}

Here we establish the existence of a tropical eigen-density and prove Proposition~\ref{p:existence of eigenmeasure}.
\begin{proof}[Proof of Proposition~\ref{p:existence of eigenmeasure}]
	Since $\widehat{C(X,\R)}$ is complete, we switch to $v\coloneqq-b$ (cf.~Remark~\ref{r:density and functional}) and it suffices to find a solution of $v\circ T=v+\oA$ in $\widehat{C(X,\R)}\smallsetminus\{+\infty,\,-\infty\}$.
	
	Recall that we have constructed a sub-action $v=v_A\in C(X,\R)$ for $A\in\aholder$ in Proposition~\ref{p: mane lemma}, so 
	\begin{equation}\label{ineq: v_0 is a sub-action}
	v_A+\oA\eqslantless v_A\circ T.
	\end{equation}
	Consider $w\coloneqq v-v_A$ and $\varphi\coloneqq v_A+\oA-v_A\circ T\in C(X,\R)$. It suffices to find $w\in\widehat{C(X,\R)}\smallsetminus\{+\infty,\,-\infty\}$ such that $w\circ T=w+\varphi$. 
	
	Note that $\varphi\eqslantless\mathbbold{0}_X$ (see (\ref{ineq: v_0 is a sub-action}))and
	\begin{equation*}
	Q(T,\varphi)=\sup_{\mu\in\cM(X,T)}\int \! \varphi \, \mathrm{d}\mu=\max_{\mu\in\cM(X,T)}\int \! \varphi \, \mathrm{d} \mu=\max_{\mu\in\cM(X,T)}\int \! \oA \, \mathrm{d} \mu=Q(T,\oA)=0.
	\end{equation*} The supremum is attained due to the weak$^*$-compactness of $\cM(X,T)$ and we fix a maximizing meaure $\mu\in \cM(X,T)$ for $\varphi$. 
	
	It follows from $C(X,\R)\ni\varphi\eqslantless\mathbbold{0}_X$ and $\int\varphi\,\mathrm{d}\mu=0$ that $\supp(\mu)\subseteq\varphi^{-1}(0)$. Moreover, since $\mu$ is $T$-invariant, $T(\supp(\mu))\subseteq\supp(\mu)$.  We use these two properties to construct a solution $w=w_0$. 
	
	Consider $S\coloneqq\bigl\{w\in\widehat{C(X,\R)}:w+\varphi\eqslantless w\circ T,\, w|_{\supp(\mu)}=0,\, \mathbbold{0}_X\eqslantless w\bigr\}$ and $w_0\coloneqq\oplusu{w\in S}w\in\widehat{C(X,\R)}$ (since $\widehat{C(X,\R)}$ is complete). Since $\varphi\eqslantless\mathbbold{0}_X$, we see that $\mathbbold{0}_X\in S$ and conc$S$ is not empty and $w_0\not\equiv-\infty$.
	
	It is straightforward to check that $w_0\in S$ and it follows from $w_0|_{\supp(\mu)}=0$ that $w_0\not\equiv+\infty$. Now consider $w_1\coloneqq w_0\circ T-\varphi$. 
	
	 Since $w_0\in S$, we have $\mathbbold{0}_X\eqslantless w_0\eqslantless w_1$. So $w_0\circ T\eqslantless w_1\circ T$, i.e.,  $w_1+\varphi\eqslantless w_1\circ T$. Recall that $\supp(\mu)\subseteq\varphi^{-1}(0)$ and $T(\supp(\mu))\subseteq \supp(\mu)$. It follows that  $w_1|_{\supp(\mu)}=(w_0\circ T-\varphi)|_{\supp(\mu)}=0$. We conclude that $w_1\in S$. 
	 
	Since $w_0$ is the maximal element of $S$, we see that $w_1\eqslantless w_0$ and consequently $w_1=w_0$. We conclude that $w_0\circ T=w_0+\varphi$, and $w_0\in\widehat{C(X,\R)}\smallsetminus\{+\infty,\,-\infty\}$.
\end{proof}

We conclude this subsection with the following proof of Proposition~\ref{p:uniqueness of tropical eigenvalue}.

\begin{proof}[Proof of Proposition~\ref{p:uniqueness of tropical eigenvalue}]
(i). Suppose that $\cL_{A}(u)=\lambda\otimes u$ for some $u\in C(X,\R)$ and some $\lambda\in\Rm$. It immediately follows that in $C(X,\R)$
\begin{equation*}
    \lim_{n\to+\infty}n^{-1}\cL_A^n(u)=\lim_{n\to+\infty}(\lambda+n^{-1}u)=\lambda.
\end{equation*}Note that $\Normbig{\cL_A^k(u)-\cL_A^k(\mathbbold{0}_X)}\leq \norm{u-\mathbbold{0}_X}$ for all $k\in\N$. We see that in $C(X,\R)$
\begin{equation*}
\lim_{n\to+\infty}n^{-1}\cL_A^n(\mathbbold{0}_X)=\lambda.
\end{equation*}It then follows from \cite[Proposition~2.1]{Je06} that $\energy=\lim_{n\to+\infty}n^{-1}\cL_A^n(\mathbbold{0}_X)=\lambda$.

(ii). Suppose that $\cL_A^{\ostar}(b)=\lambda\otimes b$ for some density $b\in D_{\max}(X)\smallsetminus\{+\infty,\,-\infty\}$ and some $\lambda\in\Rm$. It follows from Remark~\ref{r:reason for defi of the dual operator} that
\begin{equation}\label{eq: eigen-density b asso with lambda}
	\sup_{x\in X}(\cL_A(v)(x)+b(x))=\lambda+\sup_{x\in X}(v(x)+b(x))
\end{equation} for all $v\in C(X,\R )$. Thus,
\begin{equation*}
    \sup_{x\in X}\bigl(\cL_A^n(\mathbbold{0}_X)(x)+b(x)\bigr)=n\lambda+\sup_{x\in X}b(x).
\end{equation*}
Since $b\in D_{\max}(X)\smallsetminus\{+\infty,-\infty\}$, we see that $\sup_{x\in X}(b(x))\in\R$ and 
\begin{align*}
        \lambda
        =\lim_{n\to+\infty}n^{-1}\bigl(n\lambda+\sup_{x\in X}b(x)\bigr)
        &=\lim_{n\to+\infty}n^{-1}\sup_{x\in X}\bigl(\cL_A^{n}(\mathbbold{0}_X)+b(x)\bigr)\\
        &\leq \lim_{n\to+\infty}n^{-1}\sup_{x\in X}\cL_A^n(\mathbbold{0}_X)
        =\energy.
\end{align*}
Here the last equality follows from \cite[Proposition~2.1]{Je06}. If $T$ is transitive, recall from Proposition~\ref{p:main_construction} that $v_{\mathbbold{0}_X}\in C(X,\R)$. Take $v=v_{\mathbbold{0}_X}$ in (\ref{eq: eigen-density b asso with lambda}) and it follows that $\lambda=\energy$. 
\end{proof}

\subsection{Ma\~{n}\'{e} potential and representation}   \label{ss:Mane_potential}
In this subsection, properties of the Aubry set and the Ma\~{n}\'{e} potential are recalled. We establish Proposition~\ref{p:Mane_potential_properties}~(ii) and Lemma~\ref{l:mane potential gives eigenmeasures} for the proof of the representation of tropical eigen-densities in Subsection~\ref{ss: proof of D}. As in Proposition~\ref{p:main_construction}, we write $\oA\coloneqq A-\energy$ in the following discussion. 
\begin{definition}[Aubry set]\label{d:Aubry set}

Let $T\:X \rightarrow  X$ \assum. For a continuous potential $A$, we call $x\in X$ an \emph{Aubry point} if for every $\epsilon>0$, there exists $y\in X$ and $n\in\N$ such that \begin{equation*}
	d(x,y)\leq\epsilon,\quad d(T^n(y),x)\leq\epsilon,\quad\text{ and } \quad\abs{S_n\oA(y)}\leq\epsilon.
\end{equation*}
The collection of all Aubry points in $X$ is called the \emph{Aubry set} and denoted by $\Omega_A$. 
\end{definition}

Some basic properties about the Aubry set are contained in \cite[Chapter~4]{Ga17}, e.g., the nonemptiness, closedness, and $T$-invariance. Though the setting in \cite[Chapter~4]{Ga17} is subshifts of finite type, the proofs for these facts work in our context. 

Denote the set of non-wandering points by $\Omega(T)$. For $A\in\aholder$, an important property of the Aubry set $\Omega_A$ is that an invariant probability measure is a maximizing measure for $A$ if and only if it is supported on $\Omega_A$. This can be obtained through the existence of a sub-action $v$ (cf.~Proposition~\ref{p: mane lemma}) and the following observations (cf.~\cite[Chapter 4]{Ga17}):
\begin{enumerate}[label=\rm{(\roman*)}]	
\smallskip
\item $\Omega_A=\Omega_{A+v-v\circ T}\subseteq (A+v-v\circ T)^{-1}(\energy)\eqqcolon K$ and
\smallskip
\item $\bigcap_{n\geq 0}T^{-n}(K)\cap\Omega(T)\subseteq\Omega_{A+v-v\circ T}$.
\end{enumerate}

\begin{definition}[Ma\~{n}\'{e} potential]\label{d:Mane potential}
Let $T\:X \rightarrow  X$ \assum. For a potential $A\in \aholder$, the \emph{Ma\~{n}\'{e} potential} associated with $A$ is the function $\phi_A$ defined on $X\times X$ given by 
\begin{equation*}
\phi_A(x,y)\coloneqq\lim_{\epsilon\to0^+}\oplusu{n\in\N}\oplusu{d(z,x)\leq\epsilon\atop d(T^n(z),y)\leq\epsilon}S_n\oA(z)=\lim_{\epsilon\to0^+}\sup_{n\in\N}\sup_{d(z,x)\leq\epsilon\atop d(T^n(z),y)\leq\epsilon}S_n\oA(z).
\end{equation*}
\end{definition}

\begin{remark}
It is straightforward to check that $\phi_A(\cdot,\cdot)$ is upper semi-continuous. Recall from Lemma~\ref{l: uniform upper bound without transi and surje} that there exists $D>0$ such that $S_n \oA(x)\leq D$ for all $n\in\N$ and $x\in X$. Thus $\phi_A(\cdot,\cdot) \: X\times X \rightarrow \R\cup\{-\infty\}$.
\end{remark}

\begin{prop}[Properties of the Ma\~{n}\'{e} potential] \label{p:Mane_potential_properties}
Let $T\:X \rightarrow  X$ \assum~and \assumpo. Then the following statements are true for all $x,y,z\in X$:
\begin{enumerate}[label=\rm{(\roman*)}]	
\smallskip
\item $u(x)\otimes\phi_A(x,y)\leq u(y)$ for every $u\in C(X,\R\cup\{-\infty\})$ that satisfies $\cL_{\oA}(u)=u$.

\smallskip
\item $\phi_A(x,y)\otimes b(y)\leq b(x)$ for every tropical eigen-density $b$ of $\cL_{A}^{\ostar}$.
\smallskip
\item $\phi_A(x,z)\geq\phi_A(x,y)\otimes\phi_A(y,z)$.
\smallskip
\item $x\in\Omega_A$ if and only if  $\phi_A(x,x)=0$.
\smallskip
\item Assume $x\in\Omega_A$. Then $\cL_{\oA}(\phi_A(x,\cdot))=\phi_A(x,\cdot)$ and
\begin{equation*}
    \phi_A(x,z_1)\leq \phi_A(x,z_2)+\abs{A}_{d^\alpha}\lambda^{-\alpha}(1-\lambda^{-\alpha})^{-1}d(z_1,z_2)^\alpha
\end{equation*}for all $z_1,z_2\in X$ satisfying $d(z_1,z_2)<\xi$, where $\xi>0$ is the constant in Lemma~\ref{l:expanding systems}. Moreover, if $T$ is transitive, then $\phi_A(x,\cdot)\in \aholder$ is a tropical eigenfunction of $\cL_A$.
\end{enumerate}
\end{prop}

While Statements~(i), (iii), (iv), and (v)  have appeared for some different settings in the literature (cf.~\cite[Proposition~23]{CLT01} and \cite[Proposition~5.2]{Ga17}), our formulations extend these results to functions admitting $-\infty$ (namely, $u$ and $\phi_A$)—a generalization necessitated by relaxing the transitivity requirement for $T$. The proof of (i) requires only minor adaptations from existing arguments, and (v) follows reasoning similar to \cite{CLT01}; these are therefore deferred to Appendix~\ref{ss: Appendix A}. Here we focus on proving (ii)--(iv): (iii) and (iv) are included here due to their concise derivations from definitions, while (ii) presents a novel result for tropical eigen-densities.

\begin{proof}
(ii) Since $b$ is a tropical eigen-density of $\cL_{A}^{\ostar}$, $b(T(x))\otimes\oA(x)=b(x)$ for every $x\in X$ and $b$ is upper semi-continuous. Thus, 
\begin{equation}\label{eq:tropical eigendensity along the trajectory}
	b(T^n(x))\otimes S_n(\oA)(x)=b(x)
\end{equation}
	 for all $x\in X$. 
	
Denote $\widetilde{\phi_A}(x,y)\=\lim\limits_{\epsilon\to0^+}\oplusu{n\in\N}\oplusu{d(y_0,x)\leq\epsilon\atop T^n(y_0)=y}S_n(\oA)(y_0)$. 

\smallskip
\emph{Claim.} $\widetilde{\phi_A}\equiv\phi_A$.
\smallskip

We first prove the claim. By Definition~\ref{d:Mane potential}, it immediately follows that 
\begin{equation*}
\phi_A(x,y)
=\lim_{\epsilon\to0^+}\oplusu{n\in\N}\oplusu{d(z,x)\leq\epsilon\atop d(T^n(z),y)\leq\epsilon}S_n\oA(z)
\geq\lim_{\epsilon\to0^+}\oplusu{n\in\N}\oplusu{d(z,x)\leq\epsilon\atop T^n(z)=y}S_n\oA(z)=\widetilde{\phi_A}(x,y)
\end{equation*}for all $x,y\in X$.
So it suffices to show that $\widetilde{\phi_A}(x,y)\geq\phi_A(x,y)$ for all $x,y\in X$.

Fix $x,y\in X$, $n\in\N$, and $\epsilon\in(0,\xi)$ where $\xi$ is the constant in Lemma~\ref{l:expanding systems}. For every $z$ satisfying $d(z,x)\leq\frac{\epsilon}{2}$ and $d(T^n(z),y)\leq\frac{\epsilon}{2}$, it follows from Lemma~\ref{l:expanding systems} that there exists $y_0\in X$ such that $T^n(y_0)=y$ and $d\bigl(T^i(z),T^i(y_0)\bigr)\leq\lambda^{-n+i}\frac{\epsilon}{2}$ for all $0\leq i\leq n$. Since $A$ is in $\aholder$,
 \begin{equation*}
\abs{S_n(\oA)(z)-S_n(\oA)(y_0)}
\leq\sum_{i=0}^{n-1}\abs{A}_{d^{\alpha}}d\bigl(T^i(z),T^i(y_0)\bigr)^{\alpha}
\leq\abs{A}_{d^\alpha}\sum_{i=0}^{n-1}\lambda^{(-n+i)\alpha}\Bigl(\frac{\epsilon}{2}\Bigr)^\alpha
=\frac{\abs{A}_{d^\alpha}\epsilon^\alpha}{2^\alpha(\lambda^\alpha-1)}.
\end{equation*} 
Note that $d(y_0,x)\leq d(z,x)+d(z,y_0)\leq\epsilon$.
We conclude that 
\begin{equation*}
\oplusu{d(z,x)\leq\frac{\epsilon}{2}\atop d(T^n(z),y)\leq\frac{\epsilon}{2}}S_n\oA(z)
\leq\Bigl(\oplusu{d(y_0,x)\leq\epsilon\atop T^n(y_0)}S_n(\oA)(y_0)\Bigr)\otimes\frac{\abs{A}_{d^\alpha}\epsilon^\alpha}{2^\alpha(\lambda^\alpha-1)}
\end{equation*}
for all $n\in\N$ and $\epsilon\in(0,\xi)$.

Thus, 
\begin{equation*}
\phi_A(x,y)
=\lim_{\epsilon\to0^+}\oplusu{n\in\N}\oplusu{d(z,x)\leq\frac{\epsilon}{2}\atop d(T^n(z),y)\leq\frac{\epsilon}{2}}S_n(\oA)(z)
\leq\lim_{\epsilon\to0^+}\Bigl(\oplusu{n\in\N}\oplusu{d(y_0,x)\leq\epsilon\atop T^n(y_0)=y}S_n(\oA)(y_0)\Bigr)\otimes\frac{\abs{A}_{d^\alpha}\epsilon^\alpha}{2^\alpha(\lambda^\alpha-1)}
=\widetilde{\phi_A}(x,y)
\end{equation*} 
for all $x,y\in X$ and the claim follows. 
\smallskip

It follows from the claim, (\ref{eq:tropical eigendensity along the trajectory}), and the upper semi-continuity of $b$ that
\begin{equation*}
\phi_A(x,y)\otimes b(y)
=\lim_{\epsilon\to0^+}\oplusu{n\in\N}\oplusu{d(z,x)\leq\epsilon\atop T^n(z)=y}(S_n(\oA)(z)\otimes b(y))
=\lim_{\epsilon\to 0^+}\oplusu{n\in\N}\oplusu{d(z,x)\leq\epsilon\atop T^n(z)=y}b(z)\leq\limsup_{z\to x}b(z)\leq b(x).
\end{equation*}

\smallskip

(iii). In the following proof, we use Lemma~\ref{l:expanding systems} to connect two trajectories when the end of one trajectory is close to the beginning of the other.

For a trajectory from $w_1$ to $T^n(w_1)$ satisfying $d(w_1,x)\leq\epsilon$ and $d(T^n(w_1),y)\leq\epsilon$ and a trajectory from $w_2$ to $T^m(w_2)$ satisfying $d(w_2,y)\leq\epsilon$ and $d(T^m(w_2),z)\leq\epsilon$, Lemma~\ref{l:expanding systems} implies that for all $0<\epsilon<\frac{\xi}{2}$, we have the trajectory from $T_{w_1}^{-n}(w_2)$ to $T^m(w_2)$ satisfying $d\bigl(T_{w_1}^{-n}(w_2),x\bigr)\leq 3\epsilon$ and $d(T^m(w_2),z)\leq\epsilon$.

Note that $d(w_2,T^n(w_1))\leq 2\epsilon<\xi$. Thus, it follows from (\ref{ineq:difference of the nearby trajectory sum}) that 
\begin{equation*}
	\Absbig{S_n(\oA)(w_1)-S_n(\oA)\bigl(T_{w_1}^{-n}(w_2)\bigr)}\leq\lambda^{-\alpha}(1-\lambda^{-\alpha})^{-1}\abs{A}_{d^\alpha}(2\epsilon)^\alpha.
\end{equation*}
We conclude that 
\begin{equation*}
	S_{n+m}(\oA)\bigl(T_{w_1}^{-n}(w_2)\bigr)+\lambda^{-\alpha}(1-\lambda^{-\alpha})^{-1}\abs{A}_{d^\alpha}(2\epsilon)^{\alpha}\geq S_n(\oA)(w_1)+S_m(\oA)(w_2).
\end{equation*}
Now it is straightforward to check that (iii) follows from Definition~\ref{d:Mane potential}.

\smallskip

(iv). This is a direct consequence of (\ref{ineq:difference of the nearby trajectory sum}) in Lemma~\ref{l:distortion} and the definitions of the Aubry set and the Ma\~{n}\'{e} potential. See Definitions~\ref{d:Aubry set} and \ref{d:Mane potential}.
\end{proof}

\begin{prop}[Representation of eigenfuctions]   \label{p:eigenfunction}
Let $T\:X \rightarrow  X$ \assum~ and \assumpo. Then for every $u\in C(X,\R\cup\{-\infty\})$ satisfying $\cL_{\oA}(u)=u$, we have $u(\cdot)=\oplusu{x\in\Omega_A}(u(x)\otimes\phi_A(x,\cdot))$.
\end{prop}
A version of this proposition for subshifts of finite type is \cite[Proposition~6.2~(iii)]{Ga17}. We extend it to functions in $C(X,\R\cup\{-\infty\})$ and include a proof in the present setting in Appendix~\ref{ss: Appendix A} for the reader's convenience.

\begin{lemma} [$\phi_A(\cdot,y)$ is an eigen-density for each $y\in\Omega_A$] \label{l:mane potential gives eigenmeasures}
Let $T\:X \rightarrow  X$ \assum~ and \assumpo. The Ma\~{n}\'{e} potential satisfies \begin{equation}\label{eq: Mane potential (_,y) almost eigen-density}
	\phi_A(T(x),y)=\phi_A(x,y)-\oA(x)
\end{equation}
 for all $x,y\in X$ with $T(x)\neq y$. In particular, for every $y\in\Omega_A$, (\ref{eq: Mane potential (_,y) almost eigen-density}) holds for every $x\in X$, that is to say, $\phi_A(\cdot,y)$ is a tropical eigen-density of $\cL_{A}^{\ostar}$.
\end{lemma}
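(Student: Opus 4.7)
The plan is to work with the alternative formula $\phi_A(x,y)=\lim_{\epsilon\to 0^+}\oplusu{n\in\N}\oplusu{d(z,x)\leq\epsilon,\,T^n(z)=y}S_n\oA(z)$ established in the course of proving Proposition~\ref{p:Mane_potential_properties}~(ii). Using the cocycle identity $S_n\oA(z)=\oA(z)+S_{n-1}\oA(T(z))$, one can peel off the first iterate, and the identity $\phi_A(T(x),y)=\phi_A(x,y)-\oA(x)$ follows from a two-sided estimate in which $\oA(z)\to\oA(x)$ and $T(z)$ approaches $T(x)$ as $\epsilon\to 0^+$.

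First I would treat the case $T(x)\neq y$. For sufficiently small $\epsilon$, uniform continuity of $T$ on the compact space $X$ forces any $z$ with $d(z,x)\leq\epsilon$ to satisfy $T(z)\neq y$, so the case $n=1$ is excluded and the sup is only over $n\geq 2$. For the upper bound, continuity of $\oA$ and $T$ gives $\oplusu{n\geq 2}\oplusu{d(z,x)\leq\epsilon,\,T^n(z)=y}S_n\oA(z)\leq(\oA(x)+\tau(\epsilon))+\oplusu{m\geq 1}\oplusu{d(w,T(x))\leq\delta(\epsilon),\,T^m(w)=y}S_m\oA(w)$ with $\tau(\epsilon),\delta(\epsilon)\to 0$, yielding $\phi_A(x,y)\leq\oA(x)+\phi_A(T(x),y)$. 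For the lower bound, I would invoke Lemma~\ref{l:expanding systems}: for each admissible $w$ with $d(w,T(x))\leq\epsilon<\xi$ and $T^m(w)=y$, take $z\=T_x^{-1}(w)\in B(x,\lambda^{-1}\epsilon)$, so $T^{m+1}(z)=y$ and $S_{m+1}\oA(z)=\oA(z)+S_m\oA(w)\geq\oA(x)-\abs{A}_{d^\alpha}(\lambda^{-1}\epsilon)^\alpha+S_m\oA(w)$. Taking $\oplus$ over the parameters and letting $\epsilon\to 0^+$ gives $\phi_A(x,y)\geq\oA(x)+\phi_A(T(x),y)$.

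Next I would extend the identity to the case $T(x)=y\in\Omega_A$. The lower bound above works verbatim (the inverse-branch construction produces $n=m+1\geq 2$ regardless of whether $T(x)=y$). For the upper bound, the $n=1$ term now equals $\oplusu{d(z,x)\leq\epsilon,\,T(z)=y}\oA(z)$; by Lemma~\ref{l:expanding systems} the restriction $T|_{B(x,\eta)}$ is injective, so for $\epsilon<\eta$ the point $x$ is the unique preimage of $y=T(x)$ in $B(x,\epsilon)$, and this term converges to $\oA(x)$. The $n\geq 2$ part is bounded as before by $\oA(x)+\phi_A(T(x),y)=\oA(x)+\phi_A(y,y)=\oA(x)$ using Proposition~\ref{p:Mane_potential_properties}~(iv). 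Thus $\phi_A(x,y)\leq\oA(x)=\oA(x)+\phi_A(T(x),y)$, and the identity holds for all $x\in X$.

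Finally, rewriting $\phi_A(T(x),y)+\oA(x)=\phi_A(x,y)$ as $\cL_{\oA}^{\ostar}(\phi_A(\cdot,y))(x)=\phi_A(x,y)$ via the definition (\ref{eq:definition of tropical adjoint}) shows that $\phi_A(\cdot,y)$ is a tropical eigen-density of $\cL_A^{\ostar}$ with eigenvalue $\energy$; the membership $\phi_A(\cdot,y)\in D_{\max}(X)$ follows from the upper semi-continuity and boundedness recorded in the remark after Definition~\ref{d:Mane potential}, together with $\phi_A(y,y)=0$. The main technical obstacle is the lower-bound estimate, where one must carefully track the H\"older error $\abs{\oA(z)-\oA(x)}$ uniformly in $n$ while matching preimages through the inverse branch $T_x^{-1}$.
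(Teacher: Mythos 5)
Your proof is correct, and for the delicate case $T(x)=y\in\Omega_A$ it takes a genuinely different route from the paper. Both arguments split into an easy part ($T(x)\neq y$, where the $n=1$ step is ruled out and the cocycle identity does the work) and a hard part ($x\in T^{-1}(y)$). For the hard part, the paper proves the stronger intermediate fact that $\phi_A(x,T(x))=\oA(x)$ for \emph{every} $x\in X$, and the upper bound there hinges on the general inequality $\phi_A(z,z)\leq 0$, obtained by testing Proposition~\ref{p:Mane_potential_properties}~(i) against the eigenfunction $v_{\mathbbold{0}_X}$ from Proposition~\ref{p:main_construction}. You instead work only under the hypothesis $T(x)=y\in\Omega_A$: you isolate the $n=1$ contribution using the local injectivity from Lemma~\ref{l:expanding systems} (so that for $\epsilon<\eta$ the unique preimage of $y$ in $B(x,\epsilon)$ is $x$ itself, giving exactly $\oA(x)$), and you dispatch the $n\geq 2$ tail with Proposition~\ref{p:Mane_potential_properties}~(iv), i.e.\ $\phi_A(y,y)=0$. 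Your route avoids invoking the existence of a continuous eigenfunction and is, in that sense, more elementary and local; the paper's route buys the more general identity $\phi_A(x,T(x))=\oA(x)$ for all $x$. You also spell out the lower bound $\phi_A(x,y)\geq\oA(x)+\phi_A(T(x),y)$ via the inverse branch $T_x^{-1}$ and the H\"older estimate on $\oA$, where the paper compresses this into the remark that the claim ``follows immediately from the definition''; that is a useful elaboration rather than a different method. One small point worth making explicit: your use of the reformulation $\phi_A=\widetilde{\phi_A}$ is legitimate because the paper establishes that claim inside the proof of Proposition~\ref{p:Mane_potential_properties}~(ii), which is logically independent of the present lemma.
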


While the first part of this lemma incorporates ideas from \cite[Proposition~5.3]{Ga17} for subshifts of finite type, the second part verifies (\ref{eq: Mane potential (_,y) almost eigen-density}) for points in the Aubry set, thus yielding a novel construction of tropical eigen-densities, essential for their representations.

\begin{proof}
The condition $T(x)\neq y$ implies that the sum $S_n(\oA)(z)$ that approximates $\phi_A(x,y)$ with $d(z,x)<\epsilon$ and $d(T^n(z),y)<\epsilon$ must have length $n>1$ when $0<\epsilon<\min_{w\in T^{-1}(y)}d(x,w)$. Thus, (\ref{eq: Mane potential (_,y) almost eigen-density}) follows immediately from the definition of $\phi_A$ and the continuity of $T$. This proves the first part of the lemma.

For the second part of the lemma, recall that $\phi_A(\cdot,\cdot) \: X\times X \rightarrow \R\cup\{-\infty\}$ and is upper semi-continuous. Thus, $\phi_A(\cdot,y)\in D_{\max}(X)$ for all $y\in X$ and it suffices to show that $\phi_A(T(x),y)=\phi_A(x,y)-\oA(x)$ for all $y\in\Omega_A$ and $x\in T^{-1}(y)$.
 Fix $y\in\Omega_A$ and $x\in T^{-1}(y)$. By Proposition~\ref{p:Mane_potential_properties}~(iv), we have $\phi_A(T(x),y)=\phi_A(y,y)=0$. Thus, it suffices to prove $\phi_A(x,T(x))=\oA(x)$. 
 
 \smallskip
 \emph{Claim.} $\phi_A(x,T(x))=\oA(x)$ for all $x\in X$.
 \smallskip
 
 Fix a point $x\in X$. On the one hand, $S_n(\oA)(z)=\oA(x)$ for $n=1, z=x$ and it follows that $\phi_A(x,T(x))\geq \oA(x)$. 

On the other hand, since $T$ is continuous, for every $\epsilon>0$, there exists $\eta(\epsilon)\in(0,\epsilon)$ such that $d(y,x)\leq\eta(\epsilon)$ implies $d(T(y),T(x))\leq\epsilon$ for all $y\in X$.

 Thus,
\begin{align*}
\oplusu{n\in\N}\oplusu{d(z,x)\leq\eta(\epsilon)\atop d(T^n(z),T(x))
	\leq\eta(\epsilon)}S_n(\oA)(z)
	&\leq\Bigl(\oplusu{d(z,x)\leq\eta(\epsilon)}\oA(z)\Bigr)\otimes\Bigl(\oplusu{n\in\N}\oplusu{d(z,x)\leq\eta(\epsilon)\atop d(T^{n}(z),T^n(x))\leq\eta(\epsilon)}S_{n-1}(\oA)(T(z))\Bigr)\\
&\leq\Bigl(\oplusu{d(z,x)\leq\eta(\epsilon)}\oA(z)\Bigr)\otimes\Bigl(\oplusu{m\in\N_0}\oplusu{d(\tz,T(x))\leq\epsilon\atop d(T^m(\tz),T(x))\leq\epsilon}S_m(\oA)(\tz)\Bigr).
\end{align*}
In the above inequalities, we use the observation that every trajectory $\bigl\{z,\,\dots,\, T^{n-1}(z)\bigr\}$ satisfying $d(z,x)\leq\epsilon$ and $d(T^{n}(z),T(x))\leq\epsilon$ can be decomposed into one single step from $z$ to $T(z)$ and steps from $T(z)$ to $T^{n-1}(z)$.

Recall that $S_0(\oA)(\tz)=0$ for all $\tz\in X$. As $\epsilon$ tends to zero from above, we get
\begin{equation*}
\phi_A(x,T(x))\leq\oA(x)\otimes(0\oplus\phi_A(T(x),T(x))).
\end{equation*}
Note that it follows from Proposition~\ref{p:Mane_potential_properties}~(iii) that $\phi_A(T(x),T(x))\otimes\phi_A(T(x),T(x))\leq \phi_A(T(x),T(x))$ and thus $\phi_A(T(x),T(x))\leq 0$. We conclude that $\phi_A(x,T(x))\leq\oA(x)$ and the claim is now verified. 

\smallskip
This finishes the proof.
\end{proof}

The following characterizations will be useful in the proof of Theorem~\ref{t:representation}~(iii).
\begin{lemma}\label{l: the linear constant between Mane(x,_) or Mane(_,y)}
Let $T\:X \rightarrow  X$ \assum~ and \assumpo. If $\phi_A(x,y)\otimes\phi_A(y,x)=0$ for all $x,y\in\Omega_A$, then 
\begin{equation*}
    \phi_A(x,\cdot)=\phi_A(y,\cdot)\otimes\phi_A(x,y) \quad \text{ and } \quad
    \phi_A(\cdot,x)=\phi_A(\cdot,y)\otimes\phi_A(y,x)
\end{equation*}
for all $x,y\in\Omega_A$. 
\end{lemma}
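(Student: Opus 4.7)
The plan is to exploit the super-additivity relation of Proposition~\ref{p:Mane_potential_properties}~(iii), namely $\phi_A(x,z) \geq \phi_A(x,y) \otimes \phi_A(y,z)$, applied with two different orderings of the three points, and then invoke the hypothesis to convert the resulting two opposite-direction inequalities into equalities. The very first thing to observe is that the hypothesis $\phi_A(x,y) \otimes \phi_A(y,x) = 0$ for $x,y \in \Omega_A$ forces both $\phi_A(x,y)$ and $\phi_A(y,x)$ to lie in $\R$: neither can equal $-\infty$, since the tropical product would then be $-\infty \neq 0$. This finiteness is what allows us to move terms across inequalities with a sign.

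Fix $x,y \in \Omega_A$ and $z \in X$, and apply (iii) twice to obtain
\begin{equation*}
\phi_A(x,z) \geq \phi_A(x,y) \otimes \phi_A(y,z) \quad \text{and} \quad \phi_A(y,z) \geq \phi_A(y,x) \otimes \phi_A(x,z).
\end{equation*}
If $\phi_A(y,z) = -\infty$, then the second inequality together with $\phi_A(y,x) \in \R$ forces $\phi_A(x,z) = -\infty$, so both sides of the claimed equality $\phi_A(x,z) = \phi_A(y,z) \otimes \phi_A(x,y)$ reduce to $-\infty$ and the identity holds trivially. If $\phi_A(y,z) \in \R$, rearranging the second inequality (using $-\phi_A(y,x) = \phi_A(x,y)$ from the hypothesis) yields $\phi_A(x,z) \leq \phi_A(y,z) \otimes \phi_A(x,y)$. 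Combined with the first inequality this gives the desired equality $\phi_A(x,z) = \phi_A(y,z) \otimes \phi_A(x,y)$, which is the first identity.

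The second identity, $\phi_A(\cdot,x) = \phi_A(\cdot,y) \otimes \phi_A(y,x)$, is obtained by an entirely parallel argument using the two applications
\begin{equation*}
\phi_A(z,x) \geq \phi_A(z,y) \otimes \phi_A(y,x) \quad \text{and} \quad \phi_A(z,y) \geq \phi_A(z,x) \otimes \phi_A(x,y),
\end{equation*}
together with the same dichotomy on whether $\phi_A(z,y)$ is $-\infty$ or finite.

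The proof is essentially elementary bookkeeping; the only subtlety is respecting the possibility that $\phi_A$ takes the value $-\infty$ at points outside $\Omega_A$, and this is handled uniformly by the initial observation that the hypothesis makes $\phi_A$ real-valued on $\Omega_A \times \Omega_A$. I do not anticipate any genuine obstacle beyond this case analysis.
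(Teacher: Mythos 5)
Your proof is correct and takes essentially the same approach as the paper: both apply Proposition~\ref{p:Mane_potential_properties}~(iii) with the two orderings of the triple of points and combine the resulting opposite inequalities with the hypothesis $\phi_A(x,y)\otimes\phi_A(y,x)=0$; your version merely spells out the finiteness observation and the $-\infty$ case analysis that the paper leaves implicit.
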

\begin{proof}
Fix $x,y\in\Omega_A$. 
By Proposition~\ref{p:Mane_potential_properties}~(iii), we have 
\begin{equation*}
	\phi_A(x,\cdot)\otimes\phi_A(y,x)
	 \eqslantless \phi_A(y,\cdot)\quad\text{ and }\quad\phi_A(y,\cdot)\otimes\phi_A(x,y)\eqslantless\phi_A(x,\cdot).
\end{equation*}
The above two inequalities together with $\phi_A(x,y)\otimes\phi_A(y,x)=0$ imply 
$\phi_A(x,\cdot)=\phi_A(y,\cdot)\otimes\phi_A(x,y)$.
By Proposition~\ref{p:Mane_potential_properties}~(iii), we have
\begin{align*}
	\phi_A(\cdot,x)\otimes\phi_A(x,y)\eqslantless\phi_A(\cdot,y)\quad\text{ and }\quad\phi_A(\cdot,y)\otimes\phi_A(y,x)\leq\phi_A(\cdot,x).
\end{align*}
Therefore, $\phi_A(\cdot,x)=\phi_A(\cdot,y)\otimes\phi_A(y,x)$ follows from the above two inequalities and $\phi_A(x,y)\otimes\phi_A(y,x)=0$.
\end{proof}

\begin{prop}\label{p: equivalence for uniqueness of Mane (x,_) and (_,y)}
Let $T\:X \rightarrow  X$ \assum~ and \assumpo. Then the following statements are equivalent:
\begin{enumerate}[label=\rm{(\roman*)}]	
\smallskip
\item The entries of $\{\phi_A(x,\cdot)\}_{x\in\Omega_A}$ are the same up to a tropical multiplicative constant.
\smallskip
\item The entries of $\{\phi_A(\cdot,y)\}_{y\in\Omega_A}$ are the same up to a tropical multiplicative constant.
\smallskip
\item For all $x,y\in\Omega_A$, $\phi_A(x,y)\otimes\phi_A(y,x)=0$.
\end{enumerate}
\end{prop}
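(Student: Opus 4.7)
The plan is to establish a cyclic chain of implications, leveraging Lemma~\ref{l: the linear constant between Mane(x,_) or Mane(_,y)} for the implication (iii)~$\Rightarrow$~(i) and (iii)~$\Rightarrow$~(ii), and then give direct evaluations at Aubry points to prove (i)~$\Rightarrow$~(iii) and (ii)~$\Rightarrow$~(iii). The crucial input for the reverse implications is Proposition~\ref{p:Mane_potential_properties}~(iv), which tells us that $\phi_A(z,z) = 0$ for every $z \in \Omega_A$.

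For (iii)~$\Rightarrow$~(i) and (iii)~$\Rightarrow$~(ii), the work has essentially been done. Lemma~\ref{l: the linear constant between Mane(x,_) or Mane(_,y)} gives us immediately that for all $x,y\in\Omega_A$ one has $\phi_A(x,\cdot) = \phi_A(y,\cdot)\otimes\phi_A(x,y)$ and $\phi_A(\cdot,x) = \phi_A(\cdot,y)\otimes\phi_A(y,x)$, with $\phi_A(x,y),\phi_A(y,x)\in\R$ (these values are real because $\phi_A(x,y)\otimes\phi_A(y,x)=0$ prevents either factor from being $-\infty$). Hence the families in (i) and (ii) are constant up to a tropical multiplicative scalar.

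For (i)~$\Rightarrow$~(iii), fix $x,y\in\Omega_A$. By hypothesis there is a constant $c\in\Rm$ with $\phi_A(x,\cdot) = \phi_A(y,\cdot)\otimes c$. Evaluating this identity at the point $x$ and using $\phi_A(x,x)=0$ from Proposition~\ref{p:Mane_potential_properties}~(iv) yields $0 = \phi_A(y,x)\otimes c$, so $c = -\phi_A(y,x)$ (in particular $\phi_A(y,x)\in\R$). Evaluating the same identity at the point $y$ and using $\phi_A(y,y)=0$ gives $\phi_A(x,y) = c = -\phi_A(y,x)$, which rearranges to $\phi_A(x,y)\otimes\phi_A(y,x) = 0$. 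The proof of (ii)~$\Rightarrow$~(iii) is completely analogous: from $\phi_A(\cdot,x) = \phi_A(\cdot,y)\otimes c'$, evaluating at $x$ gives $c' = -\phi_A(x,y)$, and evaluating at $y$ gives $\phi_A(y,x) = c'$, whence $\phi_A(x,y)\otimes\phi_A(y,x)=0$.

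There is no serious obstacle here; the argument is essentially a bookkeeping exercise once Proposition~\ref{p:Mane_potential_properties}~(iv) and Lemma~\ref{l: the linear constant between Mane(x,_) or Mane(_,y)} are in hand. The only subtlety worth noting is ensuring that the tropical multiplicative constants produced by (i) and (ii) are genuinely real (not $\pm\infty$), which is automatic because $\phi_A(x,y), \phi_A(y,x)\leq 0$ by Proposition~\ref{p:Mane_potential_properties}~(iv) together with the sub-additivity (iii) of that proposition (applied with $z=x$), and the value $0$ is attained at the diagonal points, forcing the involved quantities to be finite.
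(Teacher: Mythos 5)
Your proof matches the paper's argument exactly: for (i)~$\Rightarrow$~(iii) and (ii)~$\Rightarrow$~(iii) you evaluate the constancy identity at the two diagonal points and invoke Proposition~\ref{p:Mane_potential_properties}~(iv), and for (iii)~$\Rightarrow$~(i),(ii) you cite Lemma~\ref{l: the linear constant between Mane(x,_) or Mane(_,y)}; the only cosmetic difference is the orientation of the tropical constant ($\phi_A(x,\cdot)=\phi_A(y,\cdot)\otimes c$ versus $\phi_A(x,\cdot)\otimes c=\phi_A(y,\cdot)$), which is immaterial.

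One small caution about the closing remark: the assertion that Proposition~\ref{p:Mane_potential_properties}~(iii) (with $z=x$) together with (iv) yields $\phi_A(x,y)\leq 0$ and $\phi_A(y,x)\leq 0$ individually is not justified by what is cited. Applying (iii) with $z=x$ gives $0=\phi_A(x,x)\geq\phi_A(x,y)\otimes\phi_A(y,x)$, i.e.\ only the bound on the \emph{sum} $\phi_A(x,y)+\phi_A(y,x)\leq 0$, not on each summand. Fortunately this remark is superfluous: in your main argument the finiteness of $c$ (equivalently of $\phi_A(y,x)$, $\phi_A(x,y)$) already falls out of the evaluations at the diagonal, since $\phi_A(y,x)\otimes c=0$ forces $\phi_A(y,x)\in\R$ (a $-\infty$ summand would make the tropical product $-\infty\neq 0$ under the convention $-\infty\otimes+\infty=-\infty$), and likewise in the (iii)~$\Rightarrow$~(i),(ii) direction the relation $\phi_A(x,y)\otimes\phi_A(y,x)=0$ directly rules out either factor being $-\infty$, as you correctly observe earlier.
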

\begin{proof} Fix $x,y\in\Omega_A$.
	
To see that (i) implies (iii), suppose $\phi_A(x,\cdot)\otimes c=\phi_A(y,\cdot)$. It follows that $\phi_A(x,x)\otimes c=\phi_A(y,x)$ and $\phi_A(x,y)\otimes c=\phi_A(y,y)$. By Proposition~\ref{p:Mane_potential_properties}~(iv), we get $c=\phi_A(y,x)$ and $\phi_A(x,y)\otimes c=0$. Thus, $\phi_A(x,y)\otimes\phi_A(y,x)=0$. 

To see that (ii) implies (iii), suppose $\phi_A(\cdot,x)\otimes d=\phi_A(\cdot,y)$. It follows that $\phi_A(x,x)\otimes d=\phi_A(x,y)$ and $\phi_A(y,x)\otimes d=\phi_A(y,y)$. By Proposition~\ref{p:Mane_potential_properties}~(iv), we get $d=\phi_A(x,y)$ and $\phi_A(y,x)\otimes d=0$. Thus, $\phi_A(y,x)\otimes\phi_A(x,y)=0$. 

That (iii) implies (i) and (ii) follows from Lemma~\ref{l: the linear constant between Mane(x,_) or Mane(_,y)}.
\end{proof}

\subsection{Proof of Theorem~\ref{t:representation}}\label{ss: proof of D}
We discover the representation of tropical eigen-densities through the duality in $\phi_A(\cdot,\cdot)$. 

\begin{proof}[\bf Proof of Theorem~\ref{t:representation}]
For (i), see Proposition~\ref{p:eigenfunction}.

\smallskip

For (ii), fix a tropical eigen-density $b$ of $\cL_{A}^{\ostar}$. It suffices to show
\begin{equation}\label{eq: representation of eigen-density as functionals}
\oplusu{x\in X}(u(x)\otimes b(x))=\oplusu{x\in X,y\in\Omega_A}(u(x)\otimes\phi_A(x,y)\otimes b(y))
\end{equation} for all $u\in C(X,\R)$. 

We first reduce the proof to $v\in C(X,\R\cup\{-\infty\})$ satisfying $\cL_{\oA}(v)=v$ using Corollary~\ref{c:more_of_constructions} and then apply Proposition~\ref{p:eigenfunction}. 

By Proposition~\ref{p:continuity of tropical linear functionals}, it suffices to prove (\ref{eq: representation of eigen-density as functionals}) for a dense subset of $C(X,\R)$, e.g., the set $\aholder$ (by the Stone--Weierstrass theorem). Now fix $u\in \aholder$.

For the left-hand side of (\ref{eq: representation of eigen-density as functionals}), by Remark~\ref{r:reason for defi of the dual operator} and that $b$ is a tropical eigen-density of $\cL_{A}^{\ostar}$, we have
\begin{equation}\label{eq: functional induced by eigen-density}
	\oplusu{x\in X}(\cL_A(u)(x)\otimes b(x))=\oplusu{y\in X}\bigl(u(y)\otimes \cL_{A}^{\ostar}(b)(y)\bigr)=\oplusu{y\in X}(u(y)\otimes b(y)\otimes\energy).
\end{equation} By repeated use of (\ref{eq: functional induced by eigen-density}), we get 
\begin{equation*}
\oplusu{x\in X}(u(x)\otimes b(x))
=\oplusu{x\in X}\bigl(\cL_{\oA}^n(u)(x)\otimes b(x)\bigr)
=\oplusu{x\in X}\Bigl(\Bigl(\oplusu{m\geq n}\cL_{\oA}^m(u)(x)\Bigr)\otimes b(x)\Bigr)
\end{equation*}
for all $n\in\N$.
It follows from (\ref{ineq:difference of the nearby trajectory sum}) and \ref{c:more_of_constructions}~(i) that  $\bigl\{\oplusu{m\geq n}\cL_{\oA}^m(u)\bigr\}_{n\in\N}$ is equicontinuous and has a uniform upper bound. Recall that $v_u= \lim_{n\to+\infty}\oplusu{m\geq n}\cL_{\oA}^m(u)$. Thus, $\bigl\{\exp \oplusu{m\geq n}\cL_{\oA}^m(u)\bigr\}_{n\in\N}$ is a normal family and uniformly converges to $\exp v_u$ as $n\to+\infty$. It follows that for all $\epsilon>0$, there exists $N\in\N$ such that
\begin{equation*}
    \oplusu{m\geq n}\cL_{\oA}^m(u)\leq \log(\exp v_u+\epsilon)
\end{equation*}
for all $n>N$. Now let $n\to+\infty$ in the above equality. We conclude that 
\begin{equation}\label{eq: reduction to eigenfunctions}
	\oplusu{x\in X}(u(x)\otimes b(x))=\oplusu{x\in X}(v_u(x)\otimes b(x)).
\end{equation}

For the right-hand side of (\ref{eq: representation of eigen-density as functionals}), according to Lemma~\ref{l:mane potential gives eigenmeasures}, $\phi_A(\cdot,y)$ is a tropical eigen-density of $\cL_{A}^{\ostar}$ for each $y\in\Omega_A$. Thus, we can substitute $b(\cdot)$ in (\ref{eq: reduction to eigenfunctions}) with  $\phi_A(\cdot,y)$ and it follows that for each $y\in\Omega_A$,
\begin{equation*}
\oplusu{x\in X}(u(x)\otimes \phi_A(x,y))=\oplusu{x\in X}(v_u(x)\otimes\phi_A(x,y)).
\end{equation*} 
Hence,
\begin{align*}
\oplusu{x\in X,y\in\Omega_A}(u(x)\otimes\phi_A(x,y)\otimes b(y))&=\oplusu{y\in\Omega_A}\Bigl(\oplusu{x\in X}(\phi_A(x,y)\otimes u(x))\otimes b(y)\Bigr)\\
&=\oplusu{y\in\Omega_A}\Bigl(\oplusu{x\in X}(\phi_A(x,y)\otimes v_u(x))\otimes b(y)\Bigr)\\
&=\oplusu{x\in X,y\in\Omega_A}(v_u(x)\otimes\phi_A(x,y)\otimes b(y)).
\end{align*}

We have achieved the first step of reduction. Now it suffices to show
\begin{equation*}
\oplusu{x\in X}(v(x)\otimes b(x))=\oplusu{x\in X,y\in\Omega_A}(v(x)\otimes\phi_A(x,y)\otimes b(y))
\end{equation*}
for all $v\in C(X,\R\cup\{-\infty\})$ satisfying $\cL_{\oA}(v)=v$. 

It follows from the discussions below that
\begin{align*}
\oplusu{x\in X,y\in\Omega_A}(v(x)\otimes\phi_A(x,y)\otimes b(y))
&=\oplusu{x\in X,y,z\in\Omega_A}(v(z)\otimes\phi_A(z,x)\otimes\phi_A(x,y)\otimes b(y))\\
&=\oplusu{y,z\in\Omega_A}(v(z)\otimes \phi_A(z,y)\otimes b(y))\\
&=\oplusu{y\in X,z\in\Omega_A}(v(z)\otimes \phi_A(z,y) \otimes b(y))\\
&=\oplusu{y\in X}(v(y)\otimes b(y))
\end{align*}
for all $v\in C(X,\R\cup\{-\infty\})$ satisfying $\cL_{\oA}(v)=v$.
Here the first and fourth identities follow from Proposition~\ref{p:eigenfunction}, and the second identity immediately follows from properties of the Ma\~{n}\'{e} potential (Propostion~\ref{p:Mane_potential_properties}~(iii)(iv)).
For the third identity, we remark that
$\phi_A(z,y)\otimes b(y)\leq b(z)$ for all $y,z\in X$ (Proposition~\ref{p:Mane_potential_properties}~(ii)) and if $z\in \Omega_A$, then the equality is achieved at $y=z\in\Omega_A$ as $\phi_A(z,z)=0$ (Proposition~\ref{p:Mane_potential_properties}~(iv)). Thus, for all $z\in\Omega_A$,
\begin{equation*}
\oplusu{y\in\Omega_A}(\phi_A(z,y)\otimes b(y))=\oplusu{y\in X}(\phi_A(z,y)\otimes b(y))=b(z).
\end{equation*}
Now (ii) is verified.

\smallskip

For (iii), recall for each $x\in\Omega_A$, $\phi_A(x,\cdot)$ is a tropical eigenfunction of $\cL_{A}$ if $T$ is transitive (Proposition~\ref{p:Mane_potential_properties}~(v)) and $\phi_A(\cdot, x)$ is a tropical eigen-density of $\cL_{A}^{\ostar}$ (Lemma~\ref{l:mane potential gives eigenmeasures}).

By Propositions~\ref{p: equivalence for uniqueness of Mane (x,_) and (_,y)}, it suffices to show that if $A$ has a unique maximizing measure, then $\phi_A(x,y)\otimes\phi_A(y,x)=0$ for all $x,y\in\Omega_A$. 

It follows from Lemma~\ref{l:mane potential gives eigenmeasures} that $\phi_A(x,T^n(x))\otimes\phi_A(T^n(x),x)=0$ for all $x\in\Omega_A$. Denote $L_x\=\overline{\{T^n(x)\}}_{n\in\N}$ for all $x\in\Omega_A$. The upper semi-continuity of $\phi_A$ then implies $\phi_A(x,z)\otimes\phi_A(z,x)=0$ for all $z\in L_x$. Note that $\phi_A(x,y)\otimes\phi_A(y,x)=0$ defines a equivalence relation between points in $\Omega_A$ by Proposition~\ref{p:Mane_potential_properties}~(iii)(iv). 

We conclude that if $\phi_A(x,y)\otimes\phi_A(y,x)\neq 0$ for some $x,y\in\Omega_A$, then $L_x\cap L_y=\emptyset$. Recall that an invariant measure is a maximizing measure for $A$ if and only if it is supported on $\Omega_A$. Since $L_x$ and $L_y$ are both compact invariant subsets of $\Omega_A$, we see that there are at least two maximizing measures supporting respectively on $L_x$ and $L_y$, which contradicts the assumption that $A$ has a unique maximizing measure. 
\end{proof}

\subsection{Uniqueness of eigenfunction and eigen-density}\label{ss: sufficient condition for uniqueness}

\begin{prop} [Sufficient condition for uniqueness]\label{p:condition_for_uniqueness}
Let $T\:X \rightarrow  X$ \assum~ and \assumpo. Assume that $A$ is uniquely maximizing. Then the following statements are true:
\begin{enumerate}[label=\rm{(\roman*)}]	
    \smallskip
    \item If a tropical eigenfunction of $\cL_A$ exists, then it is unique up to a tropical multiplicative constant. 

    \smallskip
    \item The adjoint operator $\cL_A^{\ostar}$ has a unique tropical eigen-density associated with eigenvalue $\energy$ up to a tropical multiplicative constant.
\end{enumerate}
\end{prop}
The approach in \cite[Lemma~C]{Bou00} is directly applicable to the part on eigenfunctions. In the following proof, we establish the part on eigen-densities using the relationship $0=\phi_A(x,y)\otimes\phi_A(y,x)$, which can also be used to prove the part on eigenfunctions. 

\begin{proof}
By Theorem~\ref{t:representation}~(iii), Proposition~\ref{p: equivalence for uniqueness of Mane (x,_) and (_,y)}, and Lemma~\ref{l: the linear constant between Mane(x,_) or Mane(_,y)}, we conclude that the entries of $\{\phi_A(\cdot,x)\}_{x\in\Omega_A}$ are the same tropical \ eigen-density of $\cL_{A}^{\ostar}$ up to a tropical multiplicative constant and these constants are given by 
\begin{equation}\label{eq: in_proof_the linear constant}
	\phi_A(\cdot,x)=\phi_A(\cdot,y)\otimes\phi_A(y,x)
\end{equation}for all $x,y\in\Omega_A$.

We then apply Theorem~\ref{t:representation}~(ii) to prove the uniqueness of tropical eigen-density of $\cL_{A}^{\ostar}$. Fix $x_0\in\Omega_A$. 
For every tropical eigen-density $b$ of $\cL_{A}^{\ostar}$, it follows from Theorem~\ref{t:representation}~(ii) and (\ref{eq: in_proof_the linear constant}) that
\begin{equation}\label{eq: in_proof_uniqueness of eigen-density}
\begin{aligned}
\oplusu{x\in X}(f(x)\otimes b(x))&=\oplusu{x\in X,y\in\Omega_A}(f(x)\otimes\phi_A(x,y)\otimes b(y))\\
&=\oplusu{x\in X,y\in\Omega_A}(f(x)\otimes\phi_A(x,x_0)\otimes\phi_A(x_0,y)\otimes b(y))\\
&=\oplusu{x\in X}\Bigl(f(x)\otimes\phi_A(x,x_0)\otimes\Bigl(\oplusu{y\in\Omega_A}(\phi_A(x_0,y)\otimes b(y))\Bigr)\Bigr)
\end{aligned}
\end{equation}for all $f\in C(X,\R)$. Denote $c\=\oplusu{y\in\Omega_A}(\phi_A(x_0,y)\otimes b(y))\in\Rm$. Recall $\phi_A(\cdot,x_0)$ is a tropical eigen-density (Lemma~\ref{l:mane potential gives eigenmeasures}). It follows that $\phi_A(\cdot,x_0)\otimes c\in D_{\max}(X)$. Then by (\ref{eq: in_proof_uniqueness of eigen-density}) and Remark~\ref{r:density and functional}, we conclude that $b(\cdot)=\phi_A(\cdot,x_0)\otimes c$, i.e., $b(\cdot)$ is the same as $\phi_A(\cdot,x_0)$ up to a (tropical multiplicative) constant.
\end{proof}

\subsection{Proof of Theorem~\ref{t:existence and generic uniqueness}}\label{ss: proof of C}
\begin{proof}[\bf Proof of Theorem~\ref{t:existence and generic uniqueness}]
For (i), it directly follows from Proposition~\ref{p:uniqueness of tropical eigenvalue}.

For (ii), the properties of $v_u$ follow from Corollary~\ref{c:more_of_constructions}. Observe that if $u$ is a tropical eigenfunction of $\cL_A$, then $v_u=u$. The rest part of (ii) follows from this observation.

For (iv), it directly follows from Proposition~\ref{p:existence of eigenmeasure}. 

For (iii) and (v), we recall that it follows from \cite{Je06} that for generic potentials $A$ in $\aholder$, $A$ has a unique maximizing measure. Thus, (iii) and (v) follow from Proposition~\ref{p:condition_for_uniqueness}. 
\end{proof}

\section{Zero-temperature limits}\label{s: zero-temperature limits}

The following two kinds of zero-temperature limits have been investigated in the literature (cf.~\cite{BLL13} and \cite[Section~4]{Je19}). One is to study the weak$^*$ limits of the equilibrium states $\{\mu_{\beta A}\}_{\beta\in(1,+\infty)}$ as the inverse temperature $\beta\to+\infty$. The other is to study the accumulation points (in $C^0$ topology) of $\bigl\{\frac{1}{\beta}\log u_{\beta A}\bigr\}_{\beta\in(1,+\infty)}$ as $\beta\to+\infty$ and the accumulation points are generally tropical eigenfunctions of $\cL_{A}$. It is thereby also natural to consider the logarithmic-type zero-temperature limits of the equilibrium states $\{\mu_{\beta A}\}_{\beta\in(1,+\infty)}$. 

In this section, we establish Theorems~\ref{t:zero-temperature limit} and~\ref{t:Log type limit for equilibrium states and altered potentials} in Subsection~\ref{ss: proof of E and F}, and consequently Theorems~\ref{t: uniquely maximizing implies the large deviation principle} and \ref{t: equi conditions} in Subsection~\ref{ss: proof of A and B}.

Note that by \cite[Theorems~4.3.1 and~4.4.2]{DZ09}, when $X$ is a compact metric space, a family of probability measures $\{\nu_\beta\}_{\beta\in(r,+\infty)}$ satisfies the large deviation principle as $\beta\to+\infty$ with a rate function $I$ if and only if for each $f\in C(X,\R)$,
	\begin{equation}\label{eq:LDP in functionals}
		\lim_{\beta\to+\infty}\frac{1}{\beta}\log\int\!e^{\beta f}\,\mathrm{d}\nu_{\beta}=\sup_{x\in X}(f(x)-I(x)) .
	\end{equation}

Recall that for each $f\in C(X,\R)$ and each $\beta>0$,
\begin{equation*} 
	l_{\beta}^{\mu}(f)=\frac{1}{\beta}\log\int \! e^{\beta f} \, \mathrm{d} \mu_{\beta A}\quad\text{ and }\quad
 	l_{\beta}^{m}(f)=\frac{1}{\beta}\log\int \! e^{\beta f} \, \mathrm{d} m_{\beta A}.
 \end{equation*}

Assuming that $X$ is compact, we see that $X$ and $C(X,\R)$ are both separable. Thus, by Arzel\`a--Ascoli theorem, an equicontinuous family of real-valued continuous functions on $X$ (resp.\ functionals on $C(X,\R)$) that is uniformly bounded on every compact subset of  $X$ (resp.\ $C(X,\R)$) is a normal family. 

Thus, we only verify the equicontinuity and the uniform boundedness on compact subsets of $X$ or $C(X,\R)$ when showing the normality of a given family below. Recall that $\mathbbold{0}_X$ and $\mathbbold{1}_X$ are used to represent the constant zero and one functions on $X$, respectively.

\subsection{Proofs of Theorems~\ref{t:zero-temperature limit} and \ref{t:Log type limit for equilibrium states and altered potentials}}\label{ss: proof of E and F}
\begin{proof}[\bf Proof of Theorem~\ref{t:zero-temperature limit}]
(i). This is a classical result (see e.g., \cite[Theorem~1]{Sa99}). 

\smallskip
(ii). It immediately follows from the definition of $l_{\beta}^{m}$ that  
\begin{equation}\label{ineq:regularity of eigenmeasure_functionals at positive temperature}
	\Absbig{l_{\beta}^{m}(f)}\leq \norm{f}_{C^0}\quad\text{ and }\quad
	\Absbig{l_{\beta}^m(f)-l_{\beta}^m(g)}\leq\norm{f-g}_{C^0}
	\end{equation}for all $f,g\in C(X,\R)$.
We conclude that $\bigl\{l_{\beta}^m(\cdot)\bigr\}_{\beta\in(1,+\infty)}$ is uniformly bounded on every bounded subset of $C(X,\R)$, equicontinuous, and consequently normal.

Now suppose $l(\cdot)$ is the pointwise limit of a convergent subsequence $\bigl\{l_{\beta_k}^m(\cdot)\bigr\}_{k\in\N}$ with $\beta_k\to+\infty$ as $k\to+\infty$. 

\smallskip
\emph{Claim.} $l$ is a tropical linear functional.
\smallskip

 It follows from the definition of $l_{\beta}^{m}$ that $l_{\beta}^m(a\otimes f)=a\otimes l_{\beta}^m(f)$ for all $a\in\Rm$, $f\in C(X,\R)$, and $\beta>0$. Thus, $l(a\otimes f)=a\otimes l(f)$ for all $a\in\Rm$ and $f\in C(X,\R)$.

Fix $f,g\in C(X,\R)$. It suffices to prove $l(f\oplus g)=l(f)\oplus l(g)$. We introduce the plus operation at inverse temperature $\beta>0$:\,
\begin{equation*} 
	h_1\oplusu{\beta}h_2\= \frac{1}{\beta}\log\bigl(e^{\beta h_1}+e^{\beta h_2}\bigr),
\end{equation*}for all $h_1,h_2\in C(X,\R)$.

 It immediately follows that $h_1\oplus h_2\leq h_1\oplusu{\beta}h_2\leq (h_1\oplus h_2)\otimes\frac{\log 2}{\beta}$ and $l_{\beta}^m\bigl(h_1\oplusu{\beta}h_2\bigr)=l_{\beta}^m(h_1)\oplusu{\beta}l_{\beta}^m(h_2)$ for all $h_1,h_2\in C(X,\R)$ and $\beta>0$. Thus, for all $k\in\N$,
\begin{align*}
l_{\beta_k}^m(f\oplus g)
&\leq l_{\beta_k}^m\bigl(f\oplusu{\beta_k}g\bigr)\leq l_{\beta_k}^m(f\oplus g)\otimes\frac{\log 2}{\beta_k} ,\\
l_{\beta_k}^m\bigl(f\oplusu{\beta_k}g\bigr)
&=l_{\beta_k}^m(f)\oplusu{\beta_k}l_{\beta_k}^m(g) ,\\
l_{\beta_k}^m(f)\oplus l_{\beta_k}^m(g)
&\leq l_{\beta_k}^m(f)\oplusu{\beta_k}l_{\beta_k}^m(g)\leq \bigl(l_{\beta_k}^m(f)\oplus l_{\beta_k}^m(g)\bigr)\otimes\frac{\log 2}{\beta_k}.
\end{align*}
We conclude that $l_{\beta_k}^m(f\oplus g)-\frac{\log 2}{\beta_k}
\leq l_{\beta_k}^m(f)\oplus l_{\beta_k}^m(g)\leq l_{\beta_k}^m(f\oplus g)+\frac{\log 2}{\beta_k}.$
Recall $\lim\limits_{k\to+\infty}\beta_k=+\infty$ and $\lim\limits_{k\to+\infty}l_{\beta_k}^m(\cdot)=l(\cdot)$. As $k\to+\infty$, it follows that $l(f\oplus g)=l(f)\oplus l(g)$. Now the claim is verified.
\smallskip

It follows from Propositions~\ref{p:compactness implies continuity} and \ref{p:dual space equals completion} that $l$ is tropical continuous and represented by a unique density in $D_{\max}(X)$.
To show that the density of $l$  is a tropical eigen-density of $\cL_{A}^{\ostar}$, by Remark~\ref{r:reason for defi of the dual operator}, it suffices to prove \begin{equation*}
l(\cL_A(u))=l(u)\otimes\energy
\end{equation*}for all $u\in C(X,\R)$. Fix $f\in C(X,\R)$.
	
It follows from (\ref{ineq:regularity of eigenmeasure_functionals at positive temperature}) and the definitions of the two operators that for all $k\in\N$,
\begin{equation}\label{ineq:difference between Ruelle and Bousch operator}
 \Absbigg{l_{\beta_k}^m(\cL_{A}(f))-l_{\beta_k}^m\biggl(\frac{1}{\beta_k}\log \cR_{\beta_k A} \bigl( e^{\beta_k f}\bigr)\biggr)}
 \leq\Normbigg{\cL_A(f)-\frac{1}{\beta_k}\log\cR_{\beta_k A}\bigl(e^{\beta_k f}\bigr)}_{C^0}
 \leq\frac{\log N}{\beta_k}.
\end{equation}Note that
\begin{equation}\label{eq:eigenmeasure_functional at positive temperature}
\begin{aligned}
l_{\beta_k}^m\biggl(\frac{1}{\beta_k}\log\cR_{\beta_k A}\bigl(e^{\beta_k f} \bigr)\biggr)
&=\frac{1}{\beta_k}\log\int \! \cR_{\beta_k A} \bigl(e^{\beta_k f} \bigr) \, \mathrm{d} m_{\beta_k A}
=\frac{1}{\beta_k}\log\int \! e^{P(T,\beta_k A)}\cdot e^{\beta_k f} \, \mathrm{d} m_{\beta_k A}\\
&=\frac{P(T,\beta_k A)}{\beta_k}\otimes\frac{1}{\beta_k}\log\int \! e^{\beta_k f} \, \mathrm{d} m_{\beta_k A}
=\frac{P(T,\beta_k A)}{\beta_k}\otimes l_{\beta_k}^m(f),
\end{aligned}
\end{equation}
where the second equality holds since $m_{\beta A}$ is the eigenmeasure of  $\cR_{\beta A}^*$ with eigenvalue $e^{P(T, \beta A)}$. Recall $\lim\limits_{k\to+\infty}\beta_k=+\infty$,
$\lim\limits_{k\to+\infty}l_{\beta_k}^m(\cdot)=l(\cdot)$, and $\lim\limits_{\beta\to+\infty}\frac{P(T,\beta A)}{\beta}=\energy$.

Combining (\ref{ineq:difference between Ruelle and Bousch operator}) and (\ref{eq:eigenmeasure_functional at positive temperature}) and letting $k\to+\infty$, we conclude that $l(\cL_A(f))=l(f)\otimes\energy$ and $l$ is a tropical linear functional whose density is a tropical eigen-density of $\cL_A^{\ostar}$.
\smallskip

(iii). It follows from the definition of $l_{\beta}^{\mu}$ that
\begin{equation}\label{ineq:regularity of equilibrium_functionals at positive temperature}
	\Absbig{l_{\beta}^{\mu}(f)}\leq\norm{f}_{C^0}\quad\text{ and }\quad
	\Absbig{l_{\beta}^{\mu}(f)-l_{\beta}^{\mu}(g)}\leq\norm{f-g}_{C^0}
\end{equation}for all $f,g\in C(X,\R)$. This implies that  $\bigl\{ l_{\beta}^\mu(\cdot) \bigr\}_{\beta\in(1,+\infty)}$ is normal.

Now suppose $\widehat{l}(\cdot)$ is the pointwise limit of a convergent subsequence $\bigl\{l_{\beta_k}^\mu(\cdot)\bigr\}_{k\in\N}$ with $\beta_k\to+\infty$ as $k\to+\infty$. By (i) and (ii), we take a subsequence $\bigl\{\widetilde{\beta}_k\bigr\}_{k\in\N}$ from $\{\beta_k\}_{k\in\N}$ with $\widetilde{\beta}_k\to+\infty$ as $k\to+\infty$ such that $\bigl\{{\widetilde{\beta}_k}^{-1}\log u_{\widetilde{\beta}_k A}\bigr\}_{k\in\N}$ uniformly converges to $v$ and $\bigl\{l_{\widetilde{\beta}_k}^{m}(\cdot)\bigr\}_{k\in\N}$ converges to $l(\cdot)$. 

Note that for all $\beta>0$ and $f\in C(X,\R)$, $l_{\beta}^{\mu}(f)=l_{\beta}^m \bigl( f+\beta^{-1}\log u_{\beta A} \bigr)$ since $\mu_{\beta A}=u_{\beta A}\cdot m_{\beta A}$. Thus, for all $k\in\N$ and $f\in C(X,\R)$,
\begin{equation*}
	\begin{aligned}
		\Absbig{l(v\otimes f)-l_{\widetilde{\beta}_k}^{\mu}(f)}
		&=\Absbig{l(v+f)-l_{\widetilde{\beta}_k}^m \bigl( f+{\widetilde{\beta}_k}^{-1}\log u_{\widetilde{\beta}_k A} \bigr) }\\
		&\leq\Absbig{l(v+f)-l_{\widetilde{\beta}_k}^m(v+f)}+\Absbig{l_{\widetilde{\beta}_k}^m(v+f)-l_{\widetilde{\beta}_k}^m \bigl( f+{\widetilde{\beta}_k}^{-1} \log u_{\widetilde{\beta}_k A} \bigr) }\\
		&\leq\Absbig{l(v+f)-l_{\widetilde{\beta}_k}^m(v+f)}+\Normbig{v-{\widetilde{\beta}_k}^{-1}\log u_{\widetilde{\beta}_k A}}_{C^0},
	\end{aligned}
\end{equation*} where the last inequality follows from (\ref{ineq:regularity of eigenmeasure_functionals at positive temperature}). According to our choice of $\bigl\{\widetilde{\beta}_k\bigr\}_{k\in\N}$, let $k\to+\infty$ and it follows that $l(v\otimes f)=\widehat{l}(f)$ for all $f\in C(X,\R )$. Let $b$ be the density of $l$ in $D_{\max}(X)$. Thus, $\widehat{l}(f)=\oplusu{x\in X}(f(x)\otimes v(x)\otimes b(x))$ for all $f\in C(X,\R )$, i.e., $\widehat{l}$ is tropical linear and $v\otimes b$ is the density of $\widehat{l}$ in $D_{\max}(X)$.
\end{proof}

Recall that $u_A$ is the unique eigenfuction of the Ruelle operator $\cR_A$ satisfying $\int \! u_A \, \mathrm{d} m_A=1$ with eigenvalue $e^{P(T,A)}$.  Let $\widetilde{\cR}_{A}(u) \= \frac{1}{e^{P(T,A)}u_A}\cR_{A}(uu_A)$ be the normalized Ruelle operator. Note that for all $\beta>0$, $\widetilde{\cR}_{\beta A}$ is just the Ruelle operator for potential \begin{equation*}
	g_{\beta}=\beta A+\log u_{\beta A}-\log u_{\beta A}\circ T-P(T,\beta A)
\end{equation*} and $\widetilde{\cR}_{\beta A}(\mathbbold{1}_X)=\mathbbold{1}_X,\widetilde{\cR}_{\beta A}^*(\mu_{\beta A})=\mu_{\beta A}$ (see e.g., \cite[Section~5.4]{PU10}).

So considering the logarithmic-type zero-temperature limit, we predict that if $\widehat{A}$ is the limit of $\frac{g_{\beta}}{\beta}$ and $\widehat{b}$ is the density in $D_{\max}(X)$ of the limit of $l_{\beta}^\mu(\cdot)$, then $\cL_{\widehat{A}}(\mathbbold{0}_X)=\mathbbold{0}_X$ and $\cL_{\widehat{A}}^{\ostar}(\widehat{b})=\widehat{b}$. 

\begin{proof}[\bf Proof of Theorem~\ref{t:Log type limit for equilibrium states and altered potentials}]
(i). Since $g_{\beta}=\beta A+\log u_{\beta A}-\log u_{\beta A}\circ T-P(T,\beta A)$, $\energy=\lim\limits_{\beta\to +\infty}\frac{P(T,\beta A)}{\beta}$, and $\bigl\{\frac{1}{\beta}\log u_{\beta A}\bigr\}_{\beta\in(1,+\infty)}$ is a normal family (Theorem~\ref{t:zero-temperature limit}~(i)), it immediately follows that $\bigl\{\frac{g_{\beta}}{\beta}\bigr\}_{\beta\in(1,+\infty)}$ is a normal family. It has been verified in Theorem~\ref{t:zero-temperature limit}~(iii) that $\bigl\{l_{\beta}^{\mu}(\cdot)\bigr\}_{\beta\in(1,+\infty)}$ is normal.

\smallskip
(ii). By Theorem~\ref{t:zero-temperature limit}~(iii), $\widehat{l}$ is a tropical linear functional. Thus, by Remark~\ref{r:reason for defi of the dual operator}, it suffices to show that $\widehat{l}\bigl(\cL_{\widehat{A}}(f)\bigr)=\widehat{l}(f)$ for all $f\in C(X,\R)$. Fix $f\in C(X,\R)$. 
	
	Recall $\widetilde{\cR}_{\beta A}^*(\mu_{\beta A})=\mu_{\beta A}$ and $\widetilde{\cR}_{\beta A}=\cR_{g_\beta}$. It follows that for all $\beta>0$,
	\begin{equation} \label{eq:equilibrium is eigenmeasure of altered Ruelle}
	\begin{aligned}
		l_{\beta}^{\mu}\biggl(\frac{1}{\beta}\log\cR_{g_\beta}\bigl(e^{\beta f}\bigr)\biggr)
		=\frac{1}{\beta}\log\int\!\cR_{g_\beta}\bigl(e^{\beta f}\bigr)\,\mathrm{d}\mu_{\beta A}
		&=\frac{1}{\beta}\log\int\! e^{\beta f}\,\mathrm{d}\cR_{g_\beta}^{*}(\mu_{\beta A})\\
		&=\frac{1}{\beta}\log\int\! e^{\beta f}\,\mathrm{d}\mu_{\beta A}
		=l_{\beta}^{\mu}(f).
		\end{aligned}
	\end{equation}
	Now we compare $\cL_{\widehat{A}}(f)$ with $\frac{1}{\beta}\log\cR_{g_\beta}\bigl(e^{\beta f}\bigr)$. It directly follows from definitions of the operators that for all $\beta>0$, 
	\begin{equation*}
		\begin{aligned}
			\cL_{\widehat{A}}(f) &\eqslantless\beta^{-1}\log R_{\beta\widehat{A}}\bigl(e^{\beta f}\bigr)\eqslantless\cL_{\widehat{A}}(f)+\beta^{-1}\log N,\\
			\Normbig{\widehat{A}-\beta^{-1}g_{\beta}}_{C^0}&\geq\Normbig{\beta^{-1}\log R_{\beta\widehat{A}}\bigl(e^{\beta f}\bigl)-\beta^{-1}\log R_{g_{\beta}}\bigl(e^{\beta f}\bigr)}_{C^0},
		\end{aligned}
	\end{equation*}
	where $N$ is the constant in Lemma~\ref{l:expanding systems}. We conclude that for all $\beta>0$.
	\begin{equation}\label{ineq:difference of altered Ruelle and altered Bousch}
		\Normbig{\cL_{\widehat{A}}(f)-\beta^{-1}\log R_{g_{\beta}}\bigl(e^{\beta f}\bigr)}_{C^0}
		\leq\beta^{-1}\log N+\Normbig{\widehat{A}-\beta^{-1}g_{\beta}}_{C^0}
	\end{equation}
	Thus, for all $k\in\N$,
	\begin{equation}\label{ineq:conclusion for invariantness of equilibrium_functional}
		\begin{aligned}
			&\Absbig{\widehat{l}(\cL_{\widehat{A}}(f))-\widehat{l}(f)}  \\
			&\qquad\leq \Absbig{\widehat{l}(f)-l_{\beta_k}^{\mu}(f)}+\Absbig{l_{\beta_k}^{\mu}(f)-l_{\beta_k}^{\mu}\bigl(\beta_k^{-1} \log\cR_{g_{\beta_k}}\bigl(e^{\beta_k f}\bigr)\bigr)}\\
			&\qquad\qquad+\Absbig{l_{\beta_k}^{\mu}\bigl(\beta_k^{-1} \log R_{g_{\beta_k}}\bigl(e^{\beta_k f}\bigr)\bigr)-l_{\beta_k}^{\mu}\bigl(\cL_{\widehat{A}}(f)\bigr)}
			+\Absbig{l_{\beta_k}^{\mu}\bigl(\cL_{\widehat{A}}(f)\bigr)-\widehat{l}\bigl(\cL_{\widehat{A}}(f)\bigr)}\\
			&\qquad\leq \Absbig{\widehat{l}(f)-l_{\beta_k}^{\mu}(f)} + 0 + \Normbig{\cL_{\widehat{A}}(f) - \beta_k^{-1} \log R_{g_{\beta_k}}\bigl(e^{\beta_k f}\bigr) }_{C^0} 
			+\Absbig{l_{\beta_k}^{\mu}\bigl(\cL_{\widehat{A}}(f)\bigr)-\widehat{l}\bigl(\cL_{\widehat{A}}(f)\bigr)}\\
			&\qquad\leq \Absbig{\widehat{l}(f)-l_{\beta_k}^{\mu}(f)}+\beta_k^{-1}\log N+\Normbig{\widehat{A}-\beta_k^{-1}g_{\beta_k}}_{C^0}+\Absbig{l_{\beta_k}^{\mu}\bigl(\cL_{\widehat{A}}(f)\bigr)-\widehat{l}\bigl(\cL_{\widehat{A}}(f)\bigr)},
		\end{aligned}
	\end{equation}
	where the second inequality follows from (\ref{eq:equilibrium is eigenmeasure of altered Ruelle}) and (\ref{ineq:regularity of equilibrium_functionals at positive temperature}), and the third inequality follows from (\ref{ineq:difference of altered Ruelle and altered Bousch}).
	
	Recall $\lim\limits_{k\to+\infty}l_{\beta_k}^{\mu}(\cdot)=\widehat{l}(\cdot)$, $\lim\limits_{k\to+\infty}\frac{g_{\beta_k}}{\beta_k}=\widehat{A}$, and $\lim\limits_{k\to+\infty}\beta_k=+\infty$. As $k\to+\infty$ in (\ref{ineq:conclusion for invariantness of equilibrium_functional}), we conclude that $\widehat{l}(f)=\widehat{l}\bigl(\cL_{\widehat{A}}(f)\bigr)$.

\smallskip
(iii). Recall $g_\beta=\beta A+\log u_{\beta A}-\log u_{\beta A}\circ T-P(T,\beta A)$ and $\lim\limits_{\beta\to+\infty}\beta^{-1}P(T,\beta A)=\energy$. Since $\lim\limits_{k\to+\infty}\beta_k^{-1}\log u_{\beta_kA}=v$, we have $\lim\limits_{k\to+\infty}\beta_k^{-1}g_{\beta_k}=A+v-v\circ T-\energy=\oA+v-v\circ T$. Now (iii) is verified.
\end{proof}

\subsection{Proofs of Theorems~\ref{t: uniquely maximizing implies the large deviation principle} and \ref{t: equi conditions}}\label{ss: proof of A and B}
Our established framework now enables us to generalize the main results of \cite{BLT06} and \cite{Me18} in Corollary~\ref{c:uniquely maximizing implies LDP} and Theorem~\ref{t: equi conditions} with conceptual proofs in a setting beyond symbolic dynamics, and then to achieve a further strengthening in Theorem~\ref{t: uniquely maximizing implies the large deviation principle} by dropping the transitivity assumption of Corollary~\ref{c:uniquely maximizing implies LDP}.

\begin{cor}\label{c:uniquely maximizing implies LDP}
Let $T\:X \rightarrow  X$ be a transitive expanding covering map and \assumpo~with a unique maximizing measure. Then the family of equilibrium states $\{\mu_{\beta A}\}_{\beta\in(1,+\infty)}$ satisfies the large deviation principle as $\beta\to+\infty$ with the rate function $-(b\otimes v)$, where $b$ is the unique tropical eigen-density of $\cL_{A}^{\ostar}$ satisfying $\oplusu{x\in X}b(x)=0$ and $v$ is the unique tropical eigenfunction of $\cL_{A}$ satisfying $\oplusu{x\in X}(v(x)\otimes b(x))=0$.
\end{cor}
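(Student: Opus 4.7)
The plan is to verify the functional characterization of the large deviation principle given in Remark~\ref{r: two LDP}: it suffices to show that
\begin{equation*}
\lim_{\beta\to+\infty}l_\beta^\mu(f)=\oplusu{x\in X}(f(x)\otimes v(x)\otimes b(x))
\end{equation*}
for every $f\in C(X,\R)$, since the right-hand side equals $\sup_{x\in X}(f(x)-I(x))$ with $I\=-(b\otimes v)$. The candidate rate function $I$ is automatically lower semi-continuous on $X$ (because $v$ is continuous by Proposition~\ref{p:L_continuous} and $b$ is upper semi-continuous, so $v\otimes b$ is upper semi-continuous) and takes values in $[0,+\infty]$ (because the normalization $\oplusu{x\in X}(v(x)\otimes b(x))=0$ forces $v(x)\otimes b(x)\leq 0$ pointwise).

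To prove existence of the limit, I apply Theorem~\ref{t:zero temperature limit}~(iii): the family $\{l_\beta^\mu\}_{\beta\in(0,+\infty)}$ is normal, and every pointwise subsequential limit $\widehat{l}$ as $\beta\to+\infty$ is a tropical linear functional whose density in $D_{\max}(X)$ has the form $\widehat{v}\otimes\widehat{b}$ for some tropical eigenfunction $\widehat{v}$ of $\cL_A$ and some tropical eigen-density $\widehat{b}$ of $\cL_A^{\ostar}$. Since $A$ is uniquely maximizing, Proposition~\ref{p:condition_for_uniqueness} (equivalently Theorem~\ref{t:existence and generic uniqueness}~(ii) and (iv)) forces $\widehat{v}=v\otimes c_1$ and $\widehat{b}=b\otimes c_2$ for some $c_1,c_2\in\R$, so the density of $\widehat{l}$ equals $v\otimes b\otimes(c_1\otimes c_2)$. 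To pin down the scaling, I use that $\mu_{\beta A}$ is a probability measure, giving $l_\beta^\mu(\mathbbold{0}_X)=\frac{1}{\beta}\log\mu_{\beta A}(X)=0$ for every $\beta>0$; hence $\widehat{l}(\mathbbold{0}_X)=0$, and evaluating the density representation at $\mathbbold{0}_X$ together with the normalization $\oplusu{x\in X}(v(x)\otimes b(x))=0$ yields $c_1\otimes c_2=0$. Therefore every subsequential limit has density exactly $v\otimes b$, and normality upgrades this to pointwise convergence of the full family, giving the desired identity.

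The main obstacle is the normalization bookkeeping in the middle step: the uniqueness statements determine $v$ and $b$ only individually up to tropical multiplicative constants, while the LDP rate function depends on the combined density $v\otimes b$, which must be shown to be canonically fixed. The key observation is that the probability mass constraint $l_\beta^\mu(\mathbbold{0}_X)=0$ provides exactly one scalar equation that couples the two free constants into their sum $c_1\otimes c_2$, which the convention $\oplusu{x\in X}(v(x)\otimes b(x))=0$ then forces to vanish. A minor auxiliary check is that $b\not\equiv-\infty$ and hence the density $v\otimes b$ is nontrivial; this is built in since $b\in D_{\max}(X)\setminus\{+\infty,-\infty\}$ by Proposition~\ref{p:existence of eigenmeasure}, and the finite value $\oplusu{x\in X}b(x)=0$ excludes the trivial alternative.
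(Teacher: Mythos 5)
Your proof is correct and takes a slightly different, somewhat more streamlined route than the paper. The paper's proof first establishes convergence of the full family $l_\beta^m$ (pinning down $b$ via $m_{\beta A}(X)=1$, i.e.\ $l_\beta^m(\mathbbold{0}_X)=0$), then convergence of $\frac{1}{\beta}\log u_{\beta A}$ (pinning down $v$ via $\int u_{\beta A}\,\mathrm{d}m_{\beta A}=1$, i.e.\ $l_\beta^m(\beta^{-1}\log u_{\beta A})=0$), and only then combines them through the identity $l_\beta^\mu(f)=l_\beta^m\bigl(f+\beta^{-1}\log u_{\beta A}\bigr)$. You instead invoke Theorem~\ref{t:zero temperature limit}~(iii) directly on the family $\{l_\beta^\mu\}$, use the generic uniqueness from Proposition~\ref{p:condition_for_uniqueness} to write the density of each subsequential limit as $v\otimes b\otimes(c_1\otimes c_2)$, and then use the single constraint $l_\beta^\mu(\mathbbold{0}_X)=0$ to force $c_1\otimes c_2=0$, so that all subsequential limits coincide and normality gives convergence. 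This is more economical in the sense that it needs only one normalization input ($\mu_{\beta A}$ has total mass one) rather than two, at the cost of pinning down only the product $v\otimes b$ rather than $b$ and $v$ individually; since the rate function depends only on the product, this is exactly enough. Both arguments consume the same underlying machinery (Theorem~\ref{t:zero temperature limit}, Proposition~\ref{p:condition_for_uniqueness}, and the functional form of the LDP in Remark~\ref{r: two LDP}), so the difference is in how the normalization bookkeeping is organized. One small point worth making explicit, which you mostly address: to invoke the uniqueness and to guarantee the constants $c_1,c_2$ are finite, you need that the subsequential-limit eigen-density $\widehat{b}$ lies in $D_{\max}(X)\setminus\{+\infty,-\infty\}$; this follows from the bound $\abs{l_\beta^m(f)}\leq\norm{f}_{C^0}$ (formula~(\ref{ineq:regularity of eigenmeasure_functionals at positive temperature})) persisting in the limit, ruling out $l_{\pm\infty}$.
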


\begin{proof}
Since $A$ is uniquely maximizing, Proposition~\ref{p:condition_for_uniqueness} implies the uniqueness of tropical eigen-density of $\cL_{A}^{\ostar}$ up to a constant. Note that for all $\beta>0$, $l_{\beta}^m(\mathbbold{0}_X)=\frac{1}{\beta}\log 1=0$ since $m_{\beta A}$ is a probability measure. It follows from Theorem~\ref{t:zero-temperature limit}~(ii) that as $\beta\to+\infty$, $l_{\beta}^m(\cdot)$ must pointwise converge to the unique tropical linear functional $l$ whose density $b$ in $D_{\max}(X)$ is a tropical eigen-density of $\cL_{A}^{\ostar}$ and $l(\mathbbold{0}_X)=0$, i.e., $b$ is the unique tropical eigen-density of $\cL_{A}^{\ostar}$ satisfying $\oplusu{x\in X}b(x)=0$.

\smallskip
\emph{Claim.} As $\beta\to+\infty$, $\frac{1}{\beta}\log u_{\beta A}$ must uniformly converge to the unique tropical eigenfunction $v$ of $\cL_{A}$ satisfying $l(v)=0$.
\smallskip

Recall that $\int\! u_{\beta A}\,\mathrm{d}m_{\beta A}=1$ for all $\beta>0$. It follows that for all $\beta>0$,
\begin{equation*}
	l_{\beta}^{m}\bigl(\beta^{-1}\log u_{\beta A}\bigr)
	=\frac{1}{\beta}\log \int\! u_{\beta A}\,\mathrm{d}m_{\beta A}
	=0.
\end{equation*} 
Suppose that $\widehat{v}$ is the uniform limit of a convergent subsequence $\bigl\{\frac{1}{\beta_k}\log u_{\beta_k A}\bigr\}_{k\in\N}$ with $\beta_k\to+\infty$ as $k\to +\infty$ according to Theorem~\ref{t:zero-temperature limit}~(i). Recall (\ref{ineq:regularity of eigenmeasure_functionals at positive temperature}) and it follows that for all $k\in\N$,
\begin{equation*}
\Absbig{l_{\beta_k}^m(\widehat{v})-l_{\beta_k}^m\bigl(\beta_k^{-1}\log u_{\beta_k A}\bigr)}
\leq\Normbig{\widehat{v}-\beta_k^{-1} \log u_{\beta_k A}}_{C^0}.
\end{equation*}
We conclude that 
\begin{equation}\label{ineq:intermidate for linear normalizing condition}
\begin{aligned}
\abs{l(\widehat{v})}
\leq\Absbig{l(\widehat{v})-l_{\beta_k}^m(\widehat{v})}+\Absbig{l_{\beta_k}^m(\widehat{v})}
&=\Absbig{l(\widehat{v})-l_{\beta_k}^m(\widehat{v})}+\Absbig{l_{\beta_k}^m(\widehat{v})-l_{\beta_k}^m \bigl( \beta_k^{-1}\log u_{\beta_k A} \bigr)}\\
&\leq \Absbig{l(\widehat{v})-l_{\beta_k}^m(\widehat{v})}+\Normbig{\widehat{v}-\beta_k^{-1} \log u_{\beta_k A}}_{C^0}
\end{aligned}
\end{equation} for all $k\in\N$. Since $\lim\limits_{\beta\to +\infty}l_{\beta}^{m}(\cdot)=l(\cdot)$ and $\lim\limits_{k\to+\infty}\beta_k^{-1}\log u_{\beta_k A}=\widehat{v}$, we get $l(\widehat{v})=0$ as $k\to+\infty$ in (\ref{ineq:intermidate for linear normalizing condition}). Now our claim follows from the uniqueness of the tropical eigenfunction of $\cL_{A}$ up to a tropical multiplicative constant since $A$ is uniquely maximizing (see Proposition~\ref{p:condition_for_uniqueness}). 
\smallskip

Finally, note that for all $\beta>0$ and $f\in C(X,\R)$, $l_{\beta}^{\mu}(f)=l_{\beta}^m \bigl( f+\beta^{-1}\log u_{\beta A} \bigr)$ since $\mu_{\beta A}=u_{\beta A}\cdot m_{\beta A}$. Thus, for all $\beta>0$ and $f\in C(X,\R)$,
\begin{equation}\label{ineq: connection between two kinds of functionals}
\begin{aligned}
	\Absbig{l(v+f)-l_{\beta}^{\mu}(f)}
	&=\Absbig{l(v+f)-l_{\beta}^m \bigl( f+\beta^{-1}\log u_{\beta A} \bigr) }\\
	&\leq\Absbig{l(v+f)-l_{\beta}^m(v+f)}+\Absbig{l_{\beta}^m(v+f)-l_{\beta}^m \bigl( f+\beta^{-1} \log u_{\beta A} \bigr) }\\
	&\leq\Absbig{l(v+f)-l_{\beta}^m(v+f)}+\Normbig{v-\beta^{-1}\log u_{\beta A}}_{C^0},
\end{aligned}
\end{equation}where the last inequality follows from (\ref{ineq:regularity of eigenmeasure_functionals at positive temperature}).

 Since we have proved $\lim\limits_{\beta\to+\infty}l_{\beta}^m(\cdot)=l(\cdot)$ and $\lim\limits_{\beta\to+\infty}\frac{1}{\beta}\log u_{\beta A}=v$, it follows from (\ref{ineq: connection between two kinds of functionals}) that $\lim\limits_{\beta\to+\infty}l_{\beta}^{\mu}(f)=l(v\otimes f)$ for all $f\in C(X,\R)$. Recall $b$ is the density of $l$ in $D_{\max}(X)$. Thus, $l(v+f)=\sup\limits_{x\in X}(f(x)+(b(x)+v(x)))$
for all $f\in C(X,\R)$, i.e., $-(b+v)$ is the rate function $I$ in (\ref{eq:LDP in functionals}). We conclude that $\{\mu_{\beta A}\}_{\beta\in(1,+\infty)}$ satisfies the large deviation principle as $\beta\to+\infty$ with the rate function $-(v\otimes b)$. 
\end{proof}

Theorem~\ref{t: uniquely maximizing implies the large deviation principle} is derived as a consequence of Corollary~\ref{c:uniquely maximizing implies LDP}.
\begin{proof}[\bf Proof of Theorem~\ref{t: uniquely maximizing implies the large deviation principle}]
    Here we need to deal with the lack of the transitivity assumption. Denote the set of non-wandering points by $\Omega(T)$. It is well known that all $T$-invariant measures are supported in $\Omega(T)$. By \cite[Proposition~4.3.8]{PU10}, $\Omega(T)$ can be decomposed into finitely many disjoint compact sets $\Omega_j$ ($1\leq j\leq J$) such that $(T|_{\Omega(T)})^{-1}(\Omega_j)=\Omega_j$ and $T|_{\Omega_j}$ is transitive. 
    
    If for some $\beta>0$, there are at least two equilibrium states for $\beta A$, then by \cite[Proposition~3.6.3]{PU10}, there must be two equilibrium states supported on different $\Omega_j$. Recall that every weak$^*$ accumulation point of equilibrium states for $\beta A$ as $\beta\to+\infty$ is a maximizing measure for $A$. Let $\mu_{\max}$ be the unique maximizing measure of $A$. Without loss of generality, we can assume that $\supp\mu_{\max}\subseteq\Omega_1$. Consequently, there exists $r>0$ such that for all $\beta>r$, there exists a unique equilibrium states $\mu_{\beta A}$ for $\beta A$ and $\supp\mu_{\beta A}\subseteq\Omega_1$.

    Now applying Corollary~\ref{c:uniquely maximizing implies LDP} to $T|_{\Omega_1}$ and $A|_{\Omega_1}$, we see that there exists an upper semi-continuous function $I_1\:[0,+\infty]\to \Omega_1$ such that for all $f\in C(X,\R)$,
    \begin{equation}\label{eq: ldp on some transitive part}
        \lim_{\beta\to+\infty}\frac{1}{\beta}\log\int_X e^{\beta f}\,\mathrm{d}\mu_{\beta A}=\lim_{\beta\to+\infty}\frac{1}{\beta}\log\int_{\Omega_1} e^{\beta f}\,\mathrm{d}\mu_{\beta A}=\sup_{x\in\Omega_1}(f(x)-I_1(x)).
    \end{equation}Consider 
    \begin{equation*}
        I(x)\=\begin{cases}
            I_1(x) &x\in\Omega_1\\
            +\infty &x\in X\smallsetminus\Omega_1
        \end{cases}.
    \end{equation*}It is straightforward to check that $I\:X\to [0,+\infty]$ is upper semi-continuous since $\Omega_1$ is a compact subset of $X$. It directly follows from (\ref{eq: ldp on some transitive part}) that for all $f\in C(X,\R)$,
    \begin{equation*}
        \lim_{\beta\to+\infty}\frac{1}{\beta}\log\int_X e^{\beta f}\,\mathrm{d}\mu_{\beta A}=\sup_{x\in X}(f(x)-I(x)).
    \end{equation*}We conclude that the family $\{\mu_{\beta A}\}_{\beta\in(r,+\infty)}$ satisfies the large deviation principle with the rate function $I$.

    In particular, it follows from \cite[Theroem~3.2]{Je06} that the set of potentials in $\aholder$ with a unique maximizing measure is generic and it follows from the reformulation of \cite[Theorem~A]{Co16} in \cite{Boc19} that the unique maximizing property holds for an open and dense subset of $\aholder$.
\end{proof}

We now prove Theorem~\ref{t:Log type limit for equilibrium states and altered potentials}.
\begin{proof}[\bf Proof of Theorem~\ref{t: equi conditions}]
We have shown that the three families $\bigl\{\frac{g_\beta}{\beta}\bigr\}_{\beta\in(1,+\infty)}$, $\bigl\{\frac{1}{\beta}\log u_{\beta A}\bigr\}_{\beta\in(1,+\infty)}$, and $\bigl\{l_{\beta}^m(\cdot)\bigr\}_{\beta\in(1,+\infty)}$ are normal in Theorem~\ref{t:Log type limit for equilibrium states and altered potentials}~(i) and Theorem~\ref{t:zero-temperature limit}~(i)(ii). It suffices to show that the limit of every convergent subsequence must be the same function or functional. Since $\{\mu_{\beta A}\}_{\beta\in(1,+\infty)}$ satisfies the large deviation principle as $\beta\to+\infty$, it follows from the discussion above (\ref{eq:LDP in functionals}) that $\bigl\{l_{\beta}^\mu(\cdot)\bigr\}_{\beta\in(1,+\infty)}$ pointwise converges to a tropical linear functional as $\beta\to+\infty$. We denote the functional by $\widehat{l}(\cdot)$ with its density $\widehat{b}\in D_{\max}(X)$.

\smallskip
(i). Suppose that the subsequence $\bigl\{\beta_k^{-1}g_{\beta_k}\bigr\}_{k\in\N}$ uniformly converges to $\widehat{A}\in C(X,\R)$ with $\beta_k\to+\infty$ as $k\to+\infty$. Then it follows from Theorem~\ref{t:Log type limit for equilibrium states and altered potentials}~(ii) that 
\begin{equation}\label{eq:condition determining the altered potential}
	\widehat{b}(T(x))+\widehat{A}(x)=\widehat{b}(x)
\end{equation}for all $x\in X$. If $\widehat{b}(x_0)\in\R$ for some $x_0\in X$, then it follows from $\widehat{A}\in C(X,\R)$ that $\widehat{b}(T(x_0))\in\R$ and $\widehat{A}(x_0)=\widehat{b}(x_0)-\widehat{b}(T(x_0))$. We conclude that the values of $\widehat{A}$ at points in $\bigl\{y\in X:\widehat{b}(y)\in\R\bigr\}$ are determined by (\ref{eq:condition determining the altered potential}).

 Recall that $\mu_{\beta A}$ is a probability measure for all $\beta>0$. Note that 
\begin{equation*}
\oplusu{x\in X}\bigl(0\otimes\widehat{b}(x)\bigr)=\lim_{\beta\to+\infty}l_{\beta}^{\mu}(\mathbbold{0}_X)=\lim_{\beta\to+\infty}\frac{1}{\beta}\log\mu_{\beta A}(X)=0
\end{equation*}and it follows that $\widehat{b}:X \rightarrow \R\cup\{-\infty\}$.

We claim that $\bigl\{y\in X:\widehat{b}(y)\in\R\bigr\}$ is dense in $X$. If the claim holds, then the values of $\widehat{A}$ on $X$ are all determined since $\widehat{A}\in C(X,\R)$ and $\bigl\{y\in X:\widehat{b}(y)\in\R\bigr\}$ is dense. Thus, every uniformly convergent subsequence $\bigl\{\beta_k^{-1}g_{\beta_k}\bigr\}_{k\in\N}$ must converge to the same funtion $\widehat{A}\in C(X,\R)$ and (i) is verified.

Now we prove the claim by contradiction. Suppose that $\{y\in X:\widehat{b}(y)\in\R\}$ is not dense, i.e., there is an open set $U\subseteq X$ so that $\widehat{b}(y)=-\infty$ for all $y\in U$. Moreover, (\ref{eq:condition determining the altered potential}) implies that $\widehat{b}(T(y))=-\infty$ if $\widehat{b}(y)=-\infty$. It follows that for all $y\in U$ and $n\in\N$, $\widehat{b}(T^n(y))=-\infty$.
Since $T$ is open distance-expanding and transitive, there exists a positive integer $M$ so that $X=\bigcup\limits_{i=0}^MT^i(U)$ (see \cite[Theorem~4.3.12]{PU10}). Thus, \begin{equation*}
0=\oplusu{x\in X}\widehat{b}(x)=\oplusu{0\leq i\leq M}\oplusu{y\in T^i(U)}\widehat{b}(y)=\oplusu{0\leq i\leq M}(-\infty)=-\infty.
\end{equation*}This is a contradiction and our claim follows.

\smallskip

(ii). We have proved in (i) that $\beta^{-1}g_\beta$ uniformly converges to $\widehat{A}$ as $\beta\to+\infty$.
Now suppose that the subsequence $\bigl\{\frac{1}{\beta_k}\log u_{\beta_k A}\bigr\}_{k\in\N}$ uniformly comverges to $v\in C(X,\R)$ with $\beta_k\to+\infty$ as $k\to+\infty$. Then, by Theorem~\ref{t:Log type limit for equilibrium states and altered potentials}~(iii),
\begin{equation*}
\widehat{A}=\oA+v-v\circ T.
\end{equation*}
This implies that $v-v\circ T$ is uniquely determined. 

Recall $\int\! u_{\beta A}\,\mathrm{d}m_{\beta A}=1$ for all $\beta>0$. Suppose a subsequence $\bigl\{\widehat{\beta}_k\bigr\}_{k\in\N}$ of the sequence $\{\beta_k\}_{k\in\N}$ satisfies \begin{equation*}
\lim_{k\to+\infty}\widehat{\beta}_k=+\infty\quad\text{ and }\quad
\lim_{k\to+\infty}l_{\widehat{\beta}_k}^m(\cdot)\eqqcolon\check{l}(\cdot),
\end{equation*}where $\check{l}$ is the pointwise limit of $l_{\widehat{\beta}_k}^m$ as $k\to+\infty$.

Similar to the argument for the claim in the proof of Corollary~\ref{c:uniquely maximizing implies LDP} (see (\ref{ineq:intermidate for linear normalizing condition})), we have
\begin{align*}
\Absbig{\check{l}(v)}
\leq \Absbig{\check{l}(v)-l_{\widehat{\beta}_k}^m(v)}+\Absbig{l_{\widehat{\beta}_k}^m(v)}
&=\Absbig{\check{l}(v)-l_{\widehat{\beta}_k}^m(v)}+\Absbig{l_{\widehat{\beta}_k}^m(v)-l_{\widehat{\beta}_k}^m\bigl(\widehat{\beta}_k^{-1}\log u_{\widehat{\beta}_k A}\bigr)}\\
&\leq\Absbig{\check{l}(v)-l_{\widehat{\beta}_k}^m(v)}+\Normbig{v-\widehat{\beta}_k^{-1}\log u_{\widehat{\beta}_k A}},
\end{align*}
where the equality follows from  $\int\! u_{\beta A}\,\mathrm{d}m_{\beta A}=1$ and the second inequality follows from (\ref{ineq:regularity of eigenmeasure_functionals at positive temperature}). As $k\to+\infty$ in the above inequalities, we have $\check{l}(v)=0$. Moreover, $\check{l}$ is tropical linear by Theorem~\ref{t:zero-temperature limit}~(ii).

We claim that the uniqueness of $v-v\circ T$ and that $\check{l}(v)=0$ implies the uniqueness of $v$. If there exists $v_1,v_2\in C(X,\R)$ such that $v_1-v_1\circ T=v_2-v_2\circ T$ and $\check{l}(v_1)=\check{l}(v_2)=0$, then $v_1-v_2=(v_1-v_2)\circ T$ and $v_1-v_2\in C(X,\R)$. The transitivity of $T$ immediately implies that $v_1-v_2$ must be a constant function $c$. Thus, it follows that
\begin{equation*}
0=\check{l}(v_1)=\check{l}(v_2\otimes c)=\check{l}(v_2)\otimes c=0\otimes c=c,
\end{equation*}
i.e., $v_1=v_2$ and the claim follows. Now (ii) is verified.

\smallskip

(iii). Recall $l_{\beta}^{\mu}(f)=l_{\beta}^{m}\bigl(f+\frac{1}{\beta}\log u_{\beta A}\bigr)$ for all $\beta>0$ and $f\in C(X,\R)$ since $\mu_{\beta A}=u_{\beta A}\cdot m_{\beta A}$. By (ii), $\frac{1}{\beta}\log u_{\beta A}$ uniformly converges to some $v\in C(X,\R)$ as $\beta\to+\infty$.  Recall that $l_{\beta}^\mu(\cdot)$ pointwise converges to $\widehat{l}(\cdot)$ as $\beta\to+\infty$. 

Now suppose that $\check{l}(\cdot)$ is the pointwise limit of $l_{\beta_k}^m(\cdot)$ with $\beta_k\to+\infty$ as $k\to+\infty$. Similar to the argument for the rate function in the proof of Corollary~\ref{c:uniquely maximizing implies LDP} (see (\ref{ineq: connection between two kinds of functionals})), we have for all $f\in C(X,\R)$,
\begin{align*}
\Absbig{l_{\beta_k}^m(v+f)-\widehat{l}(f)}&\leq\Absbig{l_{\beta_k}^m(v+f)-l_{\beta_k}^{\mu}(f)}+\Absbig{l_{\beta_k}^{\mu}(f)-\widehat{l}(f)}\\
&=\Absbig{l_{\beta_k}^m(v+f)-l_{\beta_k}^m(\beta_k^{-1}\log u_{\beta_k A}+f)}+\Absbig{l_{\beta_k}^{\mu}(f)-\widehat{l}(f)}\\
&\leq\Normbig{v-\beta_k^{-1}\log u_{\beta_k A}}_{C^0}+\Absbig{l_{\beta_k}^{\mu}(f)-\widehat{l}(f)},
\end{align*}
where the second inequality follows from (\ref{ineq:regularity of eigenmeasure_functionals at positive temperature}). As $k\to+\infty$ in the above inequalities, it follows that 
\begin{equation*}
\widehat{l}(f)=\check{l}(f+v)
\end{equation*}for all $f\in C(X,\R)$, i.e., $\check{l}(g)=\widehat{l}(g-v)$ for all $g\in C(X,\R)$. We conclude that $\widehat{l}$ is uniquely determined and (iii) is verified.

\smallskip
Finally, assume that $\frac{1}{\beta}\log u_{\beta A}$ uniformly converges to $v\in C(X,\R)$ as $\beta\to+\infty$ and $\lim\limits_{\beta\to+\infty}l_{\beta}^m(\cdot)=l(\cdot)$. It immediately follows from the claim in the proof of Theorem~\ref{t:zero-temperature limit}~(ii) that $l$ is a tropical linear functional. Assume that $b\in D_{\max}(X)$ is the density of $l$. It then follows from (\ref{ineq: connection between two kinds of functionals}) that $\lim\limits_{\beta\to+\infty}l_{\beta}^\mu(f)=l(v+f)=\sup_{x\in X}(f(x)+v(x)+b(x))$ for all $f\in C(X,\R)$. Thus, the family $\{\mu_{\beta A}\}_{\beta\in(1,+\infty)}$ satisfies the large deviation principle as $\beta\to+\infty$ with the rate function $-(v\otimes b)$.
\end{proof}

\appendix

\section{Additional proofs}\label{ss: Appendix A}

This appendix includes the proofs of a few results known to experts for the reader's convenience.

\begin{proof}[\bf Proof of Proposition~\ref{p:L_continuous}]
Fix $x\in X$ and denote $T^{-1}(x)\=\{x_1,\,\dots,\,x_n\}$ where $n\in\N_0$. Let $\xi>0$ be the constant in Lemma~\ref{l:expanding systems}. For all $y\in B(x,\xi)$, denote $y_i\=T_{x_i}^{-1}(y)$ for all $1\leq i\leq n$ and consequently $T^{-1}(y)=\{y_1,\,\dots,\,y_n\}$ according to Lemma~\ref{l:expanding systems}. If $x\in X\smallsetminus T(X)$, i.e., $n=0$, then $\cL_A(u)(x)=\cL_A(u)(y)=-\infty$. 

Otherwise, $\cL_{A}(u)(x)-\cL_{A}(u)(y)=\max\limits_{1\leq i\leq n}\{u(x_i)+A(x_i)\}-\max\limits_{1\leq i\leq n}\{u(y_i)+A(y_i)\}$. It follows that 
\begin{equation}\label{ineq:continuity of L_A(u)}
\abs{\cL_{A}(u)(x)-\cL_{A}(u)(y)}\leq \oplusu{1\leq i\leq k}\abs{u(x_{i})-u(y_{i})+A(x_{i})-A(y_{i})}.
\end{equation} 

If $A$ and $u$ are in $C(X,\R)$, the compactness of $X$ implies that $A$ and $u$ are uniformly continuous.
Thus, for each $\epsilon>0$, there exists $\delta>0$ such that $\abs{u(z_1)-u(z_2)+A(z_1)-A(z_2)}<\epsilon$ for all $z_1,z_2\in X$ with $d(z_1,z_2)<\delta$. 

Then for all $y\in B(x,\min\{\xi,\,\lambda\delta\})$, Lemma~\ref{l:expanding systems} implies that $d(x_i,y_i)\leq\lambda^{-1}d(x,y)<\delta$ for all $1\leq i\leq n$. It follows that
$\abs{u(x_i)-u(y_i)+A(x_i)-A(y_i)}<\epsilon$ for all $1\leq\ i\leq n$. Thus, it follows from (\ref{ineq:continuity of L_A(u)}) that $\abs{\cL_{A}(u)(x)-\cL_{A}(u)(y)}<\epsilon$ for all $y\in B(x,\min\{\xi,\,\lambda\delta\})$. Now the continuity of $\cL_A(u)$ is verified. 

Now it suffices to verify the case when $T$ is surjective and $A,u\in \aholder$. It directly follows from the above discussions that $\cL_{A}(u)\in C(X,\R)$ and consequently $\norm{\cL_{A}(u)}_{C^0}<+\infty$ since $X$ is compact. Moreover, it follows from (\ref{ineq:continuity of L_A(u)}) and Lemma~\ref{l:expanding systems} that 
\begin{equation*}
	\abs{\cL_{A}(u)}_{d^\alpha,\xi}\leq \lambda^{-\alpha}(\abs{u}_{d^\alpha}+\abs{A}_{d^\alpha}).
	\end{equation*} 
Thus, $\abs{\cL_{A}(u)}_{d^\alpha}\leq \max\bigl\{2\norm{\cL_{A}(u)}_{C^0}/\xi^\alpha,\,\lambda^{-\alpha}(\abs{u}_{d^\alpha}+\abs{A}_{d^\alpha})\bigr\}<+\infty$ and we conclude that $\cL_{A}(u)\in\aholder$.
\end{proof}

Recall the definition of $T_{x}^{-n}$ in Remark~\ref{r: defi of n times pull back}.
\begin{proof}[\bf Proof of Lemma~\ref{l:distortion}]
Let $\xi>0$ be the constant in Lemma~\ref{l:expanding systems}.
Fix $x,y\in X$ with $d(x,y)<\xi$ and $n\in\N$. Denote $T^{-n}(x)\=\{x_1,\,\dots,\,x_k\}$ and $y_i\=T_{x_i}^{-n}(y)$ for all $1\leq i\leq k$ if $T^{-n(x)}\neq\emptyset$. Then Lemma~\ref{l:expanding systems} implies that $T^{-n}(y)=\{y_1,\,\dots,\,y_k\}$ and $d\bigl(T^l(x_i),T^l(y_i)\bigr)\leq\lambda^{l-n}d(x,y)$ for all $0\leq l\leq n$ and $1\leq i\leq k$. (Note that if $T^{-n}(x)=\emptyset$, then it follows from Lemma~\ref{l:expanding systems} that $T^{-n}(y)=\emptyset$.)

It follows that 
\begin{equation*}
\AbsBig{\oplusu{\ox\in T^{-n}(x)}S_nA(\ox)-\oplusu{\oy\in T^{-n}(y)}S_nA(\oy)}\leq\oplusu{1\leq i\leq k}\abs{S_nA(x_i)-S_nA(y_i)}.
\end{equation*} 
Since $A\in\aholder$, it follows that for all $1\leq i\leq k$,
\begin{equation*}
\begin{aligned}
\abs{S_nA(x_i)-S_nA(y_i)}
&\leq\abs{A}_{d^\alpha}\bigl(d(x_i,y_i)^\alpha+ \cdots +d\bigl(T^{n-1}(x_i),T^{n-1}(y_i)\bigr)^\alpha\bigr)\\
&\leq\abs{A}_{d^\alpha}d(x,y)^\alpha(\lambda^{-n\alpha}+ \cdots +\lambda^{-\alpha})
<\abs{A}_{d^\alpha}d(x,y)^\alpha\lambda^{-\alpha}/(1-\lambda^{-\alpha}).
\end{aligned}
\end{equation*}Now (\ref{ineq:difference of the nearby trajectory sum}) is verified. 
Moreover, we take $C_0(A)\=\abs{A}_{d^\alpha}\xi^\alpha\frac{\lambda^{-\alpha}}{1-\lambda^{-\alpha}}$ and conclude that for all $x,y\in X$ with $d(x,y)<\xi$ and $n\in\N$,
\begin{equation}\label{ineq:distortion for nearby points}
		\AbsBig{\oplusu{\ox\in T^{-n}(x)}S_nA(\ox)-\oplusu{\oy\in T^{-n}(y)}S_nA(\oy)}\leq C_0(A).
\end{equation}

Recall that $X$ is compact. Now we assume that $T$ is topologically transitive.

\smallskip
\emph{Claim.} There exists $N_{\xi}\in\N$ such that for all $x,y\in X$, there exists an integer $m$ satisfying $0\leq m\leq N_{\xi}$ and $T^m(B(x,\xi))\cap B(y,\xi)\neq\emptyset$. 
\smallskip

Since $X$ is compact, there exists a finite set $\{z_1,\,\dots,\,z_s\}\subseteq X$ such that $\bigcup\limits_{i=1}^s B\bigl(z_i,\frac{\xi}{2}\bigr)=X$. For all $x,y\in X$, there exists $i,j\in\{1,\,\dots,\,s\}$ such that $d(x,z_i)<\frac{\xi}{2}$ and $d(y,z_j)<\frac
{\xi}{2}$. Thus, $B\bigl(z_i,\frac{\xi}{2}\bigr)\subseteq B(x,\xi)$ and $B\bigl(z_j,\frac{\xi}{2}\bigr)\subseteq B(y,\xi)$. We conclude that if for some $m\in\N_0$, $T^m\bigl(B\bigl(z_i,\frac{\xi}{2}\bigr)\bigr)\cap B\bigl(z_j,\frac{\xi}{2}\bigr)\neq\emptyset$, then $T^m(B(x,\xi))\cap B(y,\xi)\neq\emptyset$. 
 
It follows from the transitivity of $T$ that there exists $m_{ij}\in\N_0$ such that $T^{m_{ij}}\bigl(B\bigl(z_i,\frac{\xi}{2}\bigr)\bigr)\cap B\bigl(z_j,\frac{\xi}{2}\bigr)\neq\emptyset$ for all $i,j\in\{1,\,\dots,\,s\}$. Thus, denote 
\begin{equation}   \label{e:Def_N_xi}
	N_{\xi}\=\max\limits_{1\leq i,j\leq s}{m_{ij}} <+\infty.
\end{equation}
Now the claim is verified.
 
 \smallskip
	
Fix $x,y\in X$ and $n\in\N$. Then the claim implies that there exists $y'\in T^{-m}(B(y,\xi))\cap B(x,\xi)$ for some integer $m$ satisfying $0\leq m\leq N_{\xi}$. It follows from (\ref{ineq:distortion for nearby points}) that 
	\begin{equation*}
		\begin{aligned}
			\oplusu{\ox\in T^{-n}(x)}S_nA(\ox)&\leq C_0(A)+\oplusu{\overline{y'}\in T^{-n}(y')}S_nA(\overline{y'})\\
			&=C_0(A)-S_mA(y')+\oplusu{\overline{y'}\in T^{-n}(y')}S_{n+m}A(\overline{y'})\\
			&\leq C_0(A)-m\inf A+\oplusu{\overline{y'}\in T^{-n-m}(T^m(y'))}S_{n+m}A(\overline{y'})\\
			&=C_0(A)-m\inf A+\oplusu{\overline{y'}\in T^{-n-m}(T^m(y'))}(S_mA(\overline{y'})+S_nA(T^m(\overline{y'})))\\
			&\leq C_0(A)-m\inf A+m\sup A+\oplusu{\overline{y'}\in T^{-n-m}(T^m(y'))}S_nA(T^m(\overline{y'}))\\
			&=C_0(A)-m\inf A+m\sup A+\oplusu{z\in T^{-n}(T^m(y'))}S_nA(z)\\
			&\leq 2C_0(A)+N_{\xi}(\sup A-\inf A)+\oplusu{\oy\in T^{-n}(y)}S_nA(\oy).
		\end{aligned}
	\end{equation*}
Thus, we can take $C_1(A)\=2C_0(A)+N_{\xi}(\sup A-\inf A)$ and 
\begin{equation}\label{ineq:distortion for all points}
	\AbsBig{\oplusu{\ox\in T^{-n}(x)}S_nA(\ox)-\oplusu{\oy\in T^{-n}(y)}S_nA(\oy)}\leq C_1(A)
\end{equation} for all $x,y\in X$ and $n\in\N$.
	
Now it suffices to give the estimate for $\Absbig{\cL_{A}^n(u)}_{d^\alpha}$ when $A,u\in\aholder$. Fix $A,u\in\aholder$. Note that for all $x\in X$,
\begin{equation*}
	\cL_{A}^n(u)(x)=\oplusu{\ox\in T^{-n}(x)}(S_nA(\bar{x})+u(\ox)).
\end{equation*}
For $x,y\in X$ with $d(x,y)<\xi$, (\ref{ineq:difference of the nearby trajectory sum}) implies that for all $n\in\N$,
\begin{equation*}
		\Absbig{\cL_{A}^n(u)}_{d^\alpha,\xi}\leq\frac{\abs{A}_{d^\alpha}\lambda^{-\alpha}}{1-\lambda^{-\alpha}}+\abs{u}_{d^\alpha}\lambda^{-\alpha}\leq\frac{\lambda^{-\alpha}}{1-\lambda^{-\alpha}}(\abs{A}_{d^\alpha}+\abs{u}_{d^\alpha}).
\end{equation*} For $x,y\in X$ with $d(x,y)\geq\xi$, (\ref{ineq:distortion for all points}) implies that for all $n\in\N$,
\begin{equation*}
		 \Absbig{\cL_{A}^n(u)(x)-\cL_{A}^n(u)(y)} \leq (C_1(A)+\sup u-\inf u)  \xi^{ - \alpha} d(x,y)^{\alpha}   .
\end{equation*}
We conclude that for all $n\in\N$,
\begin{equation}\label{ineq:Lipschitz constant estimate for L_A^n(u)}
\abs{\cL_A^n(u)}_{d^\alpha}\leq\max\bigl\{ \lambda^{-\alpha} (1-\lambda^{-\alpha})^{-1}(\abs{A}_{d^\alpha}+\abs{u}_{d^\alpha}),\, (C_1(A)+\sup u-\inf u) \xi^{-\alpha}\bigr\}.
\end{equation}
Now let $C_2(A,u)$ denote a positive constant satisfying $\abs{\cL_{A}^n(u)}_{d^\alpha}\leq C_2(A,u)(\abs{A}_{d^\alpha}+\abs{u}_{d^\alpha})$ for a specific pair $A,u\in\aholder$ and all $n\in\N$. 

If $A,u\in\aholder$ are two constant functions, then for each $n\in\N$, $\cL_{A}^n(u)$ is a constant function. Thus, $0=\abs{A}_{d^\alpha}=\abs{u}_{d^\alpha}=\Absbig{\cL_{A}^n(u)}_{d^\alpha}$ for all $n\in\N$ and consequently $C_2(A,u)$ can be arbitrary positive number. 

Now suppose that there is a non-constant function among $A$ and $u$, i.e., $\abs{A}_{d^\alpha}+\abs{u}_{d^\alpha}>0$. By (\ref{ineq:Lipschitz constant estimate for L_A^n(u)}), we can take $C_2(A,u)\=\max\bigl\{\frac{\lambda^{-\alpha}}{1-\lambda^{-\alpha}},\,\frac{C_1(A)+\sup u-\inf u}{\xi^\alpha(\abs{A}_{d^\alpha}+\abs{u}_{d^\alpha})}\bigr\}$.
	
	Moreover, $A,u\in\aholder$ implies that 
	\begin{equation*}
		\sup A-\inf A\leq\abs{A}_{d^\alpha}(\diam X)^\alpha\quad\text{ and }\quad\sup u-\inf u\leq\abs{u}_{d^\alpha}(\diam X)^\alpha.
	\end{equation*} 
	Recall that $C_1(A)=2C_0(A)+N_{\xi}(\sup A-\inf A)$ and $C_0(A)=\abs{A}_{d^\alpha}\xi^\alpha\frac{\lambda^{-\alpha}}{1-\lambda^{-\alpha}}$. We conclude that
\begin{equation*}
\begin{aligned}
C_2(A,u)
&\leq \max\biggl\{\frac{\lambda^{-\alpha}}{1-\lambda^{-\alpha}}\,,\frac{C_1(A)+\sup u-\inf u}{\xi^\alpha(\abs{A}_{d^\alpha}+\abs{u}_{d^\alpha})}\biggr\}\\
&\leq \max\biggl\{\frac{\lambda^{-\alpha}}{1-\lambda^{-\alpha}}\,,\frac{2C_0(A)+\abs{u}_{d^\alpha}(\diam X)^\alpha+N_{\xi}\abs{A}_{d^\alpha}(\diam X)^\alpha}{\xi^\alpha(\abs{A}_{d^\alpha}+\abs{u}_{d^\alpha})}\biggr\}\\
&\leq \max\biggl\{\frac{\lambda^{-\alpha}}{1-\lambda^{-\alpha}}\,,\frac{N_{\xi}(\diam X)^\alpha}{\xi^\alpha}+\frac{2C_0(A)}{\xi^\alpha(\abs{A}_{d^\alpha}+\abs{u}_{d^\alpha})}\biggr\}\\
&\leq \max\biggl\{\frac{\lambda^{-\alpha}}{1-\lambda^{-\alpha}}\,,\frac{N_{\xi}(\diam X)^\alpha}{\xi^\alpha}+\frac{2\lambda^{-\alpha}}{1-\lambda^{-\alpha}}\biggr\}.
\end{aligned}
\end{equation*} 
Thus, we take $C_2\=\frac{2\lambda^{-\alpha}}{1-\lambda^{-\alpha}}+\frac{N_{\xi}(\diam X)^\alpha}{\xi^\alpha}$ and conclude that 
\begin{equation*}
\Absbig{\cL_A^n(u)}_{d^\alpha}\leq C_2(\abs{A}_{d^\alpha}+\abs{u}_{d^\alpha})
\end{equation*} for all $A,u\in\aholder$ and $n\in\N$. The proof is now complete.
\end{proof}

The following lemma is used in the proof of Proposition~\ref{p:Mane_potential_properties}~(v).
\begin{lemma}\label{l:long trajectory for Aubry point}
Let $T\:X \rightarrow  X$ \assum~and \assumpo. Let $\xi>0$ be the constant in Lemma~\ref{l:expanding systems}. For all $x_0\in\Omega_A$, $\epsilon\in(0,\xi)$, and $l\in\N$, there exists a trajectory from $x_1$ to $T^n(x_1)$ satisfying $n>l$, $d(x_1,x_0)\leq\epsilon$, $T^n(x_1)=x_0$, and $\abs{S_n(\oA)(x_1)}\leq\epsilon$.
\end{lemma}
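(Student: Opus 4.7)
The plan is to apply the Aubry property to $x_0$ with a carefully chosen small auxiliary parameter $\epsilon'$, use the inverse branches from Lemma~\ref{l:expanding systems} to shadow the resulting approximate return by an exact orbit landing at $x_0$, and iterate the shadowing enough times to force the orbit length to exceed $l$.

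Given $\epsilon \in (0, \xi)$ and $l \in \N$, I would reserve the choice of $\epsilon' \in (0, \xi)$ until the end and invoke the Aubry property to obtain $y \in X$ and $n_0 \in \N$ with $d(y, x_0) \leq \epsilon'$, $d(T^{n_0}(y), x_0) \leq \epsilon'$, and $\abs{S_{n_0}(\oA)(y)} \leq \epsilon'$. Setting $k \= \lceil (l+1)/n_0 \rceil$ (so that $k \leq l+1$ and $kn_0 > l$), I would define points $w_k, w_{k-1}, \ldots, w_0$ by reverse induction: $w_k \= x_0$ and $w_{j-1} \= T_y^{-n_0}(w_j)$ for $j = k, \ldots, 1$. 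A short induction using Lemma~\ref{l:expanding systems} shows that $r_j \= d(w_j, T^{n_0}(y))$ satisfies $r_{j-1} \leq \lambda^{-n_0} r_j + 2\epsilon'$, whence $r_j \leq 3\epsilon'/(1 - \lambda^{-1})$ uniformly in $j$ and $k$; provided $\epsilon'$ is chosen so that this bound is below $\xi$, each inverse branch is well defined. Setting $x_1 \= w_0$ and $n \= kn_0$, I obtain $T^n(x_1) = x_0$ and $n > l$ by construction.

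The distance condition is immediate: $d(x_1, x_0) \leq d(w_0, y) + d(y, x_0) \leq \lambda^{-1} r_1 + \epsilon'$, which is $O(\epsilon')$. For the Birkhoff sum, I would decompose $S_n(\oA)(x_1) = \sum_{j=0}^{k-1} S_{n_0}(\oA)(w_j)$ and compare each summand to $S_{n_0}(\oA)(y)$ using the $\alpha$-H\"older continuity of $\oA$ together with the contraction $d(T^i(w_j), T^i(y)) \leq \lambda^{i-n_0} r_{j+1}$ for $0 \leq i < n_0$ (obtained from the identity $T^i \circ T_y^{-n_0} = T_{T^i(y)}^{-(n_0-i)}$ and Lemma~\ref{l:expanding systems}). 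This yields
\begin{equation*}
\abs{S_{n_0}(\oA)(w_j)} \leq \epsilon' + \abs{A}_{d^\alpha}\, r_{j+1}^\alpha\, \frac{\lambda^{-\alpha}}{1-\lambda^{-\alpha}} \leq C(\epsilon')^\alpha
\end{equation*}
for a constant $C$ depending on $\alpha$, $\lambda$, and $\abs{A}_{d^\alpha}$ only, and crucially not on $n_0$ or $k$. Summing over $j$ gives $\abs{S_n(\oA)(x_1)} \leq kC(\epsilon')^\alpha \leq (l+1) C(\epsilon')^\alpha$.

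Finally, I would select $\epsilon'$ small enough (depending on $\epsilon$ and $l$ but not on $n_0$) so that simultaneously $\epsilon' \leq \xi(1-\lambda^{-1})/3$, the distance bound is at most $\epsilon$, and $(l+1)C(\epsilon')^\alpha \leq \epsilon$. The main delicacy is that $n_0$ is outside our control once Aubry is invoked; the whole argument rests on arranging that every constant in the estimates is uniform in $n_0 \geq 1$, which is precisely what the geometric contraction of inverse branches delivers.
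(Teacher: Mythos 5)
Your proof is correct, but it takes a genuinely different route from the paper. The paper does not build the long trajectory by hand: it invokes \cite[Corollary~4.5]{Ga17} as a black box, which already produces a trajectory from some $x_2$ to $T^n(x_2)$ with $n>l$ approximately returning to $x_0$ with small Birkhoff sum, and then does a \emph{single} shadowing step --- replacing $x_2$ by $x_1 = T_{x_2}^{-n}(x_0)$ --- and controls the change in the Birkhoff sum by the H\"older distortion estimate. Your proof instead works directly from the raw Aubry definition: you get one possibly short approximate return of length $n_0$, then concatenate $k = \lceil(l+1)/n_0\rceil$ copies of it by shadowing via the inverse branch $T_y^{-n_0}$, so the orbit length $kn_0$ exceeds $l$ by design. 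In effect you re-prove the content of \cite[Corollary~4.5]{Ga17} and fold it into the construction. What your route buys is self-containedness --- no need to import that corollary or verify its extension from subshifts to the present expanding setting (which the paper handles by remark only) --- at the cost of a longer argument. The shadowing estimates themselves (geometric contraction of inverse branches giving uniform-in-$n_0$ bounds on the $r_j$'s and on $\abs{S_{n_0}(\oA)(w_j) - S_{n_0}(\oA)(y)}$) are the same mechanism as the paper's single-step version, and your emphasis on choosing $\epsilon'$ independently of $n_0$ is exactly the right delicacy to flag; the bounds $r_j \le 3\epsilon'/(1-\lambda^{-1})$, $k \le l+1$, and $\abs{S_{n_0}(\oA)(w_j)} \le C(\epsilon')^{\alpha}$ are all correct and uniform, so the final choice of $\epsilon'$ goes through.
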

\begin{remark}
This lemma is slightly different from \cite[Corollary~4.5]{Ga17}. In this lemma, we assume $A\in\aholder$ and require $T^n(x_1)=x_0$ while \cite[Corollary~4.5]{Ga17} assumes $A\in C(X,\R)$ and only requires $d(T^n(x_1),x_0)\leq\epsilon$. Although \cite[Corollary~4.5]{Ga17} is stated for subshifts of finite type, its proof is applicable to our setting. Thus, we directly use it in the following proof.
\end{remark}
\begin{proof}
Fix $\epsilon\in(0,\xi)$, $l\in\N$, and $x_0\in\Omega_A$. Then fix $\delta>0$ satisfying 
\begin{equation*}
	\delta+\lambda^{-\alpha}(1-\lambda^{-\alpha})^{-1}\abs{A}_{d^\alpha}\delta^{\alpha}<\epsilon<\xi.
\end{equation*}

It follows from \cite[Corollary~4.5]{Ga17} that there exists a trajectory from $x_2$ to $T^n(x_2)$ satisfying $n>l$, $d(x_2,x_0)\leq\delta$, $d(T^n(x_2),x_0)\leq\delta$, and $\abs{S_n(\oA)(x_2)}\leq\delta$.

Since $d(x_2,x_0)\leq\delta<\xi$, Lemma~\ref{l:expanding systems} implies that we have the trajectory from $T_{x_2}^{-n}(x_0)$ to $x_0$ with 
\begin{equation*}
	d\bigl(T_{x_2}^{-n}(x_0),x_0\bigr)
	\leq d\bigl(T_{x_2}^{-n}(x_0),x_2\bigr)+d(x_2,x_0)
	\leq \lambda^{-n}d(x_0,T^n(x_2))+d(x_2,x_0)
	\leq 2\delta
	<\epsilon.
\end{equation*}
Denote $x_1\=T_{x_2}^{-n}(x_0)$. It follows from (\ref{ineq:difference of the nearby trajectory sum}) that 
$\abs{S_n(\oA)(x_1)-S_n(\oA)(x_2)}\leq\lambda^{-\alpha}(1-\lambda^{-\alpha})^{-1}\abs{A}_{d^\alpha}\delta^{\alpha}$. 
We conclude that $d(x_1,x_0)<\epsilon$, $T^n(x_1)=x_0$, and 
\begin{equation*}
	\abs{S_n(\oA)(x_1)}\leq \delta+\lambda^{-\alpha}(1-\lambda^{-\alpha})^{-1}\abs{A}_{d^\alpha}\delta^{\alpha}<\epsilon,
\end{equation*}where the last inequality follows from the definition of $\delta$.
\end{proof}

Now we provide a proof of Proposition~\ref{p:Mane_potential_properties}.

\begin{proof}[\bf Proof of Proposition~\ref{p:Mane_potential_properties}]
(i). Since $\cL_{\oA}(u)=u$, $u(T(x))\geq u(x)\otimes\oA(x)$ for all $x\in X$. Thus, $u(z)\otimes S_n(\oA)(z)\leq u(T^n(z))$ for all $z\in X$ and $n\in\N$. 

Fix $x,y\in X$. Since $u\in C(X,\R\cup\{-\infty\})$, for every $\epsilon>0$, there exists $\eta(\epsilon)\in(0,\epsilon)$ such that 
\begin{equation*}
	u(x)\leq u(z)+\epsilon\qquad\text{ and }\qquad u(w)\leq\log(\exp (u(y))+\epsilon)
\end{equation*} for all $z\in B(x,2\eta(\epsilon))$ and $w\in B(y,2\eta(\epsilon))$. By Definition~\ref{d:Mane potential}, we see that
\begin{align*}
u(x)\otimes\phi_A(x,y)&=\lim_{\epsilon\to 0^+}u(x)\otimes\Bigl(\oplusu{n\in\N}\oplusu{d(z,x)\leq\eta(\epsilon)\atop d(T^n(z),y)\leq\eta(\epsilon)}S_n\oA(z)\Bigr)\\
&\leq\limsup_{\epsilon\to 0^+}\Bigl(\oplusu{n\in\N}\oplusu{d(z,x)\leq\eta(\epsilon)\atop d(T^n(z),y)\leq\eta(\epsilon)}S_n\oA(z)\otimes u(z)\otimes\epsilon\Bigr)\\
&\leq\limsup_{\epsilon\to 0^+}\Bigl(\oplusu{n\in\N}\oplusu{d(z,x)\leq\eta(\epsilon)\atop d(T^n(z),y)\leq\eta(\epsilon)}u(T^n(z))\otimes\epsilon\Bigr)\\
&\leq\limsup_{\epsilon\to 0^+}(\log(\exp (u(y))+\epsilon)\otimes\epsilon\\
&=u(y).
\end{align*}

\smallskip

(ii)--(iv). Statemens~(ii), (iii), and (iv) are already established in~Subsection~\ref{ss:Mane_potential}.

\smallskip

(v). We remark that the following idea, which is also a generalization of Lemma~\ref{l:long trajectory for Aubry point}, is important for the proof below. When $x_0\in\Omega_A$, for all $l\in\N$, $z\in X$, and $\epsilon>0$, there exists a trajectory from $x_1$ to $T^n(x_1)$ satisfies $n>l$, $d(x_1,x_0)\leq\epsilon$, $d(T^n(x_1),z)\leq\epsilon$, and $\abs{S_n(\oA)(x_1)-\phi_A(x_0,z)}\leq\epsilon$. This idea is best captured by the concept of the Peierls boundary which we do not elaborate on.

Now we start the proof, let $\xi>0$ be the constant in Lemma~\ref{l:expanding systems}, and fix $x_0\in\Omega_A$.

\smallskip
Step~1. We first establish the claim below which will also be useful in the following steps.

\smallskip

\emph{Claim~1}. The inequality
\begin{equation}\label{ineq: Mane(x,) is real when x is Aubry}
\phi_A(x_0,z)\geq S_n(\oA)(y_0)-\lambda^{-\alpha}(1-\lambda^{-\alpha})^{-1}\abs{A}_{d^\alpha}d(x_0,y_0)^{\alpha}
\end{equation}
holds for all $z\in X$ and every $y_0\in B(x_0,\xi)$ satisfying $T^n(y_0)=z$ for some $n\in\N_0$.

\smallskip

Now fix $z\in X$ and $y_0\in B(x_0,\xi)$ with $T^n(y_0)=z$ for some $n\in\N_0$. 

Fix arbitrary $\epsilon_1\in(0,\xi)$. Fix some $l\in\N$ depending on $\epsilon_1$ and satisfying $\lambda^{-l}\xi<\frac{\epsilon_1}{2}$. By Lemma~\ref{l:long trajectory for Aubry point}, there exists a trajectory from $x_1$ to $T^{n_0}(x_1)$ satisfying $n_0>l$, $d(x_1,x_0)\leq\frac{\epsilon_1}{2}$, $T^{n_0}(x_1)=x_0$, and $\abs{S_{n_0}(\oA)(x_1)}\leq\epsilon_1$.

By Lemma~\ref{l:expanding systems}, we have the trajectory from $T_{x_1}^{-n_0}(y_0)$ to $T^{n}(y_0)$ with 
\begin{equation*}
	d\bigl(T_{x_1}^{-n_0}(y_0),x_0\bigr)
	\leq d\bigl(T_{x_1}^{-n_0}(y_0),x_1\bigr)+d(x_1,x_0)
	\leq\lambda^{-n_0}d(y_0,x_0)+d(x_1,x_0)
	\leq \lambda^{-l}\xi+2^{-1}\epsilon_1<\epsilon_1.
\end{equation*} 
Since $y_0\in B(x_0,\xi)$, it follows from (\ref{ineq:difference of the nearby trajectory sum}) that
\begin{equation}\label{ineq: the difference of nearby trajectory sum in step2}
\Absbig{S_{n_0}(\oA)(x_1)-S_{n_0}(\oA)\bigl(T_{x_1}^{-n_0}(y_0)\bigr)}\leq\lambda^{-\alpha}(1-\lambda^{-\alpha})^{-1}\abs{A}_{d^\alpha}d(x_0,y_0)^{\alpha}.
\end{equation}
By (\ref{ineq: the difference of nearby trajectory sum in step2}) and the definition of $x_1$, we conclude that
\begin{align*}
\oplusu{m\in\N}\oplusu{d(y_1,x_0)\leq\epsilon_1\atop T^m(y_1)=z}S_m(\oA)(y_1)
&\geq S_{n_0+n}(\oA)\bigl(T_{x_1}^{-n_0}(y_0)\bigr)\\
&\geq S_{n_0}(\oA)(x_1)-\lambda^{-\alpha}(1-\lambda^{-\alpha})^{-1}\abs{A}_{d^\alpha}d(x_0,y_0)^{\alpha}+S_n(\oA)(y_0)\\
&\geq -\epsilon_1+S_n(\oA)(y_0)-\lambda^{-\alpha}(1-\lambda^{-\alpha})^{-1}\abs{A}_{d^\alpha}d(x_0,y_0)^{\alpha}.
\end{align*}
As $\epsilon_1\to0^+$ in the above identities, (\ref{ineq: Mane(x,) is real when x is Aubry}) follows and Claim~1 is verified.

\smallskip

Recall that $\phi_A(\cdot,\cdot) \: X\times X \rightarrow \R\cup\{-\infty\}$. Note that the transitivity of $T$ implies the existence of $y_0\in B(x_0,\xi)$ satisfying $T^n(y_0)=z$. We conclude that if $T$ is transitive, then $\phi_A(x_0,z)\in\R$ for all $x_0\in\Omega_A$ and $z\in X$. 

\smallskip 
Step~2. We show that $\phi_A(x_0,z)=\oplusu{y\in T^{-1}(z)}(\phi_A(x_0,y)\otimes \oA(y))$ for all $z\in X$. 

Fix $z\in X$. Recall that we have proved $\phi_A(x,T(x))=\oA(x)$ for all $x\in X$ as a claim in the proof of Lemma~\ref{l:mane potential gives eigenmeasures}. Then it follows from Proposition~\ref{p:Mane_potential_properties}~(i) that 
\begin{equation*}
	\oplusu{y\in T^{-1}(z)}(\phi_A(x_0,y)\otimes \oA(y))=\oplusu{y\in T^{-1}(z)}(\phi_A(x_0,y)\otimes\phi_A(y,z))\leq \phi_A(x_0,z).
\end{equation*}
Now it suffices to show that $\phi_A(x_0,z)\leq\oplusu{y\in T^{-1}(z)}(\phi_A(x_0,y)\otimes\oA(y))$.

Fix arbitraty $\epsilon_2\in(0,\xi)$. Recall that we have proved 
\begin{equation*}
	\phi_A(x_0,z)=\lim_{\epsilon\to0^+}\oplusu{n\in\N}\oplusu{d(y_0,x_0)\leq\epsilon\atop T^n(y_0)=z}S_n\oA(y_0)
\end{equation*}as a claim in the proof of Proposition~\ref{p:Mane_potential_properties}~(ii). Note the above limit is a decreasing limit. Thus, there exists $w\in X$ such that $d(w,x_0)\leq\epsilon_2$, $T^m(w)=z$ for some $m\in\N$, and 
\begin{equation*}
	\phi_A(x_0,z)\leq\oplusu{n\in\N}\oplusu{d(y_0,x_0)\leq\epsilon_2\atop T^n(y_0)=z}S_n\oA(y_0)\leq S_m\oA(w)+\epsilon_2=S_{m-1}\oA(w)+\oA(w)+\epsilon_2.
\end{equation*}
Note that $m-1\in\N_0$. We now apply (\ref{ineq: Mane(x,) is real when x is Aubry}) with $T^{m-1}(w)$ in place of $z$, $w$ in place of $y_0$, and $m-1$ in place of $n$. It follows that 
\begin{equation*}
	S_{m-1}\oA(w)
 \leq \phi_A\bigl(x_0,T^{m-1}(w) \bigr)+\lambda^{-\alpha} (1-\lambda^{-\alpha} )^{-1}\abs{A}_{d^\alpha}d(w,x_0)^\alpha.
\end{equation*}
Denote $G(\epsilon_2)\=\epsilon_2+\frac{\lambda^{-\alpha}}{1-\lambda^{-\alpha}}\abs{A}_{d^\alpha}\epsilon_2^\alpha$. Recall $T^m(w)=z$. We conclude that 
\begin{equation*}
	\phi_A(x_0,z)
 \leq \phi_A \bigl(x_0,T^{m-1}(w) \bigr)+\oA(w)+G(\epsilon_2)
 \leq\oplusu{y\in T^{-1}(z)}(\phi_A(x_0,y)\otimes\oA(y))+G(\epsilon_2).
\end{equation*}
As $\epsilon_2\to0^+$ in the above inequality, we get $\phi_A(x_0,z)\leq\oplusu{y\in T^{-1}(z)}(\phi_A(x_0,y)\otimes\oA(y))$.

\smallskip
Step~3. We verify the continuity of $\phi_A(x_0,\cdot)$.

\smallskip

\emph{Claim~2}. For all $z_1,z_2\in X$ satisfying $d(z_1,z_2)<\xi$ and $x_0\in\Omega_A$,
\begin{equation}\label{ineq: continuity of Mane(x_0) when x_0 is Aubry}
\phi_A(x_0,z_1)\leq\phi_A(x_0,z_2)+\lambda^{-\alpha}(1-\lambda^{-\alpha})^{-1}\abs{A}_{d^\alpha}d(z_1,z_2)^{\alpha}.
\end{equation}

Now fix $z_1,z_2\in X$ with $d(z_1,z_2)<\xi$.

Fix arbitrary $\epsilon_3\in\bigl(0,\frac{\xi}{2}\bigr)$. Fix some $l\in\N$ depending on $\epsilon_3$ and satisfying $\lambda^{-l}\xi<\frac{\epsilon_3}{2}$. By Lemma~\ref{l:long trajectory for Aubry point}, there exists a trajectory from $x_1$ to $T^{n_1}(x_1)$ satisfying $n_1>l$, $d(x_1,x_0)\leq\frac{\epsilon_3}{2}$, $T^{n_1}(x_1)=x_0$ and $\abs{S_{n_1}(\oA)(x_1)}\leq\epsilon_3$.

For each trajectory from $y_1$ to $T^n(y_1)$ satisfying $n\in\N$, $T^n(y_1)=z_1$, and $d(y_1,x_0)\leq\frac{\epsilon_3}{2}<\xi$, Lemma~\ref{l:expanding systems} implies that we have the trajectory from 
$T_{x_1}^{-n_1}(y_1)$ to $T^n(y_1)$ with 
\begin{equation}\label{ineq: x_0 and y_2 is close}
\begin{aligned}
d\bigl(T_{x_1}^{-n_1}(y_1),x_0\bigr)&\leq d\bigl(T_{x_1}^{-n_1}(y_1),x_1\bigr)+d(x_1,x_0)\\
&\leq \lambda^{-n_1}d(y_1,x_0)+d(x_1,x_0)\leq2^{-1}\epsilon_3+2^{-1}\epsilon_3=\epsilon_3.
\end{aligned}
\end{equation}
Since $d(y_1,x_0)\leq\frac{\epsilon_3}{2}<\xi$, it follows from (\ref{ineq:difference of the nearby trajectory sum}) that 
\begin{equation}\label{ineq: add a long trajectory with small sum}
\begin{aligned}
\Absbig{S_{n_1}(\oA)\bigl(T_{x_1}^{-n_1}(y_1)\bigr)}
&\leq\abs{S_{n_1}(\oA)(x_1)}+\lambda^{-\alpha}(1-\lambda^{-\alpha})^{-1}\abs{A}_{d^\alpha}(\epsilon_3/2)^{\alpha}   \\
&\leq\epsilon_3+\lambda^{-\alpha}(1-\lambda^{-\alpha})^{-1}\abs{A}_{d^\alpha}(\epsilon_3/2)^{\alpha}.
 \end{aligned}
\end{equation}

Denote $y_2\=T_{x_1}^{-n_1}(y_1)$ and $n_2\=n+n_1$. Note that $T^{n_2}(y_2)=T^n(y_1)=z_1$.
Since $d(z_1,z_2)<\xi$, Lemma~\ref{l:expanding systems} implies that we have the trajectory from $T_{y_2}^{-n_2}(z_2)$ to $z_2$ with 
\begin{equation}\label{ineq: z_2 pull long enough is close to x_0}
d\bigl(T_{y_2}^{-n_2}(z_2),x_0\bigr)
\leq d\bigl(T_{y_2}^{-n_2}(z_2),y_2\bigr)+d(y_2,x_0)
\leq \lambda^{-n_2}d(z_2,z_1)+\epsilon_3< \lambda^{-l}\xi+\epsilon_3
<2\epsilon_3,
\end{equation}
where the second inequality follows from Lemma~\ref{l:expanding systems} and (\ref{ineq: x_0 and y_2 is close}), and the third inequality follows from $n_2>n_1>l$. Moreover, it directly follows from (\ref{ineq:difference of the nearby trajectory sum}) that 
\begin{equation}\label{ineq: difference of z_1 and z_2 pull long back}
\Absbig{S_{n_2}(\oA)\bigl(T_{y_2}^{-n_2}(z_2)\bigr)-S_{n_2}(\oA)(y_2)}\leq \lambda^{-\alpha}(1-\lambda^{-\alpha})^{-1}\abs{A}_{d^\alpha}d(z_1,z_2)^{\alpha}.
\end{equation}
Denote $F(\epsilon_3)\=\epsilon_3+\frac{\lambda^{-\alpha}}{1-\lambda^{-\alpha}}\abs{A}_{d^\alpha}(\epsilon_3/2)^{\alpha}$. We conclude that for each trajectory from $y_1$ to $T^n(y_1)$ satisfying $n\in\N$, $T^n(y_1)=z_1$, and $d(y_1,x_0)\leq\frac{\epsilon_3}{2}$, we have
\begin{align*}
S_n(\oA)(y_1)&=S_{n_2}(\oA)(y_2)-S_{n_1}(\oA)(y_2)\\
&\leq S_{n_2}(\oA)\bigl(T_{y_2}^{-n_2}(z_2)\bigr)+\frac{\lambda^{-\alpha}}{1-\lambda^{-\alpha}}\abs{A}_{d^\alpha}d(z_1,z_2)^{\alpha}+\epsilon_3+\frac{\lambda^{-\alpha}}{1-\lambda^{-\alpha}}\abs{A}_{d^\alpha}(\epsilon_3/2)^{\alpha}\\
&\leq \oplusu{m\in\N}\oplusu{d(z,x_0)\leq 2\epsilon_3\atop T^m(z)=z_2}S_m(\oA)(z)+\frac{\lambda^{-\alpha}}{1-\lambda^{-\alpha}}\abs{A}_{d^\alpha}d(z_1,z_2)^{\alpha}+F(\epsilon_3),
\end{align*}where the inequality in the second line follows from (\ref{ineq: difference of z_1 and z_2 pull long back}) and (\ref{ineq: add a long trajectory with small sum}), and the inequality in the third line follows from (\ref{ineq: z_2 pull long enough is close to x_0}).
Thus, 
\begin{equation*}
\oplusu{n\in\N}\oplusu{d(y_1,x_0)\leq\epsilon_3/2\atop T^n(y_1)=z_1}S_n(\oA)(y_1)\leq  \oplusu{m\in\N}\oplusu{d(z,x_0)\leq 2\epsilon_3\atop T^m(z)=z_2}S_m(\oA)(z)+\frac{\lambda^{-\alpha}}{1-\lambda^{-\alpha}}\abs{A}_{d^\alpha}d(z_1,z_2)^{\alpha}+F(\epsilon_3).
\end{equation*}
As $\epsilon_3\to0^+$ in the above identity, we get 
\begin{equation*}
\phi_A(x_0,z_1)\leq\phi_A(x_0,z_2)+\lambda^{-\alpha}(1-\lambda^{-\alpha})^{-1}\abs{A}_{d^\alpha}d(z_1,z_2)^{\alpha}.
\end{equation*}

Thus, Claim~2 follows. If $T$ is transitive, then $\phi_A(x_0,\cdot)\in C(X,\R)$ and thus $\phi_A(x_0,\cdot)\in\aholder$ with $\abs{\phi_A(x_0,\cdot}_{d^\alpha,\xi}\leq\abs{A}_{d^\alpha}\lambda^{-\alpha}(1-\lambda^{-\alpha})$.

\smallskip

We conclude that $\phi_A(x_0,\cdot)$ is a tropical eigenfunction if $T$ is transitive and (v) is verified.
\end{proof}

\begin{proof}[\bf Proof of Proposition~\ref{p:eigenfunction}]
	Let $u\in C(X,\R\cup\{-\infty\})$ satisfy $\cL_{\oA}(u)=u$.
	It follows from Proposition~\ref{p:Mane_potential_properties}~(i) that  $u(\cdot)\geq\oplusu{x\in\Omega_A}(u(x)\otimes\phi_A(x,\cdot))$. It suffices to find an Aubry point $x_y$ for each $y$ in $X$ such that $u(x_y)\otimes\phi_A(x_y,y)\geq u(y)$. Fix $y\in X$.

    If $u(y)=-\infty$, then the above inequality immediately holds. Now assume that $u(y)\in\R$.
 
	 By Lemma~\ref{l:expanding systems}, $T^{-1}(x)$ is finite for all $x\in X$. Consider $u(y)=\oplusu{z\in T^{-1}(y)}(u(z)+\oA(z))$. Then there exists $y_1\in T^{-1}(y)$ such that $u(y)=u(y_1)+\oA(y_1)$. Then consider $u(y_1)=\oplusu{z\in T^{-1}(y_1)}(u(z)+\oA(z))$ and there exists $y_2\in T^{-1}(y_1)$ such that $u(y_1)=u(y_2)+\oA(y_2)$. Repeating this process recursively, we get a sequence $\{y_k\}_{k\in\N}$ in $X$ satisfying $T(y_{k+1})=y_k$ and $u(y_k)=u(y_{k+1})+\oA(y_{k+1})$ for all $k\in\N$.
	
	\smallskip
	\emph{Claim.} Every accumulation point of $\{y_k\}_{k\in\N}$ as $k\to+\infty$ is an Aubry point and these Aubry points satisfy $u(\cdot)+\phi_A(\cdot,y)\geq u(y)$.
	\smallskip
	
	Suppose that the subsequence $\bigl\{y_{n_k}\bigr\}_{k\in\N}$ converges to $x_y$ with $n_k\to+\infty$ as $k\to+\infty$. Without loss of generality, we assume that $n_{k+1}>n_k+k$ for all $k\in\N$.
	
	Fix $\epsilon>0$, the continuity of $u$ together with $u(y)\in\R$ implies that there exists $\eta\in(0,\epsilon)$ such that $\abs{u(x_y)-u(z)}\leq\epsilon/2$ for all $z\in X$ with $d(x_y,z)\leq\eta$. Thus, when $k$ is big enough, $d\bigl(y_{n_k},x_y\bigr)\leq\eta<\epsilon$ and $d\bigl(y_{n_{k+1}},x_y\bigr)\leq\eta<\epsilon$. Let $n_{k+1}-n_k\eqqcolon m_k\in\N$ and consequently for all $k$ big enough,
	 \begin{equation*}
		\Absbig{S_{m_k}(\oA)\bigl(y_{n_k}\bigr)}=\Absbig{u\bigl(y_{n_k}\bigr)-u\bigl(y_{n_{k+1}}\bigr)}\leq2\cdot\frac{\epsilon}{2}=\epsilon
	\end{equation*}	
	 since $d\bigl(y_{n_k},y_{n_{k+1}}\bigr)\leq d\bigl(y_{n_k},x_y\bigr)+d\bigl(x_y,y_{n_{k+1}}\bigr)\leq2\epsilon$. By Definition~\ref{d:Aubry set}, we conclude that $x_y$ is an Aubry point. Moreover, $d\bigl(y_{n_k},x_y\bigr)\leq\eta<\epsilon$ implies that for all $k$ big enough,
	 \begin{equation*}
	 	u(y)=u\bigl(y_{n_k}\bigr)\otimes S_{n_k}(\oA)\bigl(y_{n_k}\bigr)\leq \bigl(u\bigl(x_y\bigr)+\epsilon\bigr)\otimes\Bigl(\oplusu{n\in\N}\oplusu{d(z,x_y)\leq\epsilon\atop d(T^n(z),y)\leq\epsilon}S_n(\oA)(z)\Bigr).
	 	\end{equation*}
 	 As $\epsilon\to 0^+$ in the above inequality, we get $u(y)\leq u\bigl(x_y\bigr)\otimes\phi_A\bigl(x_y,y\bigr)$. Now the claim is verified and it follows that $u(\cdot)=\oplusu{x\in\Omega_A}(u(x)\otimes\phi_A(x,\cdot))$.
\end{proof}

\end{document}